\newtheorem{Proposition}{Proposition}[section]
\newtheorem{Lemma}[Proposition]{Lemma}
\newtheorem{Theorem}[Proposition]{Theorem}
\newtheorem{Corollary}[Proposition]{Corollary}
\newtheorem{Definition}[Proposition]{Definition}
\newtheorem{Remark}[Proposition]{Remark}
\newtheorem{Example}[Proposition]{Example}
\newbox\squ  
\def\cfs{,\C}
\def\anti{(\operatorname{-anti})}
\def\inv{(\operatorname{-inv})}
\def\sgn{\operatorname{sgn}}
\def\Perv{\operatorname{Perv}}
\def\Gr{\operatorname{Gr}}
\def\Col{\operatorname{Col}}
\def\Std{\operatorname{Std}}
\def\Perv{\operatorname{Perv}}
\def\Par{\mathscr P}
\def\deg{\operatorname{deg}}
\def\id{\operatorname{id}}
\def\Id{\operatorname{Id}}
\def\C{{\mathbb C}}
\def\Z{{\mathbb Z}}
\def\0{{\bar 0}}
\def\1{{\bar 1}}
\def\pr{{\operatorname{pr}}}
\def\inv{{\operatorname{-inv}}}
\def\anti{{\operatorname{-anti}}}
\def\R{{\mathtt R}}
\def\hom{{\operatorname{Hom}}}
\def\End{{\operatorname{End}}}
\def\res{{\operatorname{res}}}
\def\im{{\operatorname{im}}}
\def\bj{{\underbar{j}}}
\def\bz{\hbox{\boldmath{$0$}}}
\def\sbz{\hbox{\boldmath{$\scriptstyle{0}$}}}
\def\eps{{\varepsilon}}
\def\phi{{\varphi}}
\def\la{{\lambda}}
\def\La{{\Lambda}}
\def\underbar{\mathpalette\@underbar}
\def\@underbar#1#2{\settowidth{\@tempdimb}{$#1#2$}\@tempdimb=0.8\@tempdimb
                   \ooalign{$#1#2$\crcr%
                         \hfil\rule[-.5mm]{\@tempdimb}{.4pt}\hfil}}
\newdimen\hoogte    \hoogte=8pt    
\newdimen\breedte   \breedte=8pt   
\newdimen\dikte     \dikte=0.5pt    
\newenvironment{young}{\begingroup
       \def\vr{\vrule height0.8\hoogte width\dikte depth 0.2\hoogte}
       \def\fbox##1{\vbox{\offinterlineskip
                    \hrule height\dikte
                    \hbox to \breedte{\vr\hfill##1\hfill\vr}
                    \hrule height\dikte}}
       \vbox\bgroup \offinterlineskip \tabskip=-\dikte \lineskip=-\dikte
            \halign\bgroup &\fbox{##\unskip}\unskip  \crcr }
       {\egroup\egroup\endgroup}
\def\youngdiagram#1{\relax\ifmmode\vcenter{\,\begin{young}#1\end{young}\,}\else%
              $\vcenter{\,\begin{young}#1\end{young}\,}$\fi}
\newdimen\Hoogte    \Hoogte=14pt    
\newdimen\Breedte   \Breedte=14pt   
\newdimen\Dikte     \Dikte=0.8pt    
\newenvironment{Young}{\begingroup
       \def\vr{\vrule height0.8\Hoogte width\Dikte depth 0.2\Hoogte}
       \def\fbox##1{\vbox{\offinterlineskip
                    \hrule height\Dikte
                    \hbox to \Breedte{\vr\hfill##1\hfill\vr}
                    \hrule height\Dikte}}
       \vbox\bgroup \offinterlineskip \tabskip=-\Dikte \lineskip=-\Dikte
            \halign\bgroup &\fbox{##\unskip}\unskip  \crcr }
       {\egroup\egroup\endgroup}
\def\Youngdiagram#1{\relax\ifmmode\vcenter{\,\begin{Young}#1\end{Young}\,}\else%
              $\vcenter{\,\begin{Young}#1\end{Young}\,}$\fi}
\begin{document}

\title[Spaltenstein varieties]{Cohomology of Spaltenstein varieties}
\author{\sc Jonathan Brundan and Victor Ostrik}

\thanks{{\em 2010 Mathematics Subject Classification:} 20C08.}
\thanks{Research supported in part by NSF grants DMS-0654147 and DMS-0602263.}
\address{Department of Mathematics, University of Oregon, Eugene, Oregon, USA.}
\email{brundan@uoregon.edu, vostrik@uoregon.edu}
\maketitle


\vspace{3mm}

\begin{abstract}
We give a presentation for the cohomology algebra of the
Spaltenstein variety of all partial flags annihilated by a fixed nilpotent
matrix, generalizing the description of the cohomology algebra of the 
Springer fiber found by De Concini, Procesi and Tanisaki.
\end{abstract}

\section{Introduction}

Throughout the article, we fix integers $n, d \geq 0$ and
write $\Lambda(n,d)$ for the set of all $n$-part
{\em compositions} 
$\mu = (\mu_1,\dots,\mu_n)$ 
of $d$, so the $\mu_i$'s are non-negative integers summing to $d$.
Let $\Lambda^+(n,d) \subseteq \Lambda(n,d)$ denote the
$n$-part {\em partitions} of $d$, i.e. the $\lambda \in \Lambda(n,d)$ satisfying
$\lambda_1 \geq \cdots\geq \lambda_n$.
For $\lambda \in \Lambda^+(n,d)$, we write $\lambda^T$ for the transpose partition (which may have more than $n$ non-zero parts). 
Let $X$ be the complex projective variety of
flags $(U_0,\dots,U_d)$ in $\C^d$, so 
\begin{equation*}
\{0\} = U_0 < U_1 < \cdots < U_d = \C^d,
\qquad
\dim U_i / U_{i-1} = 1.
\end{equation*}
By a classical result of Borel, the cohomology algebra $H^*(X\cfs)$ 
is isomorphic to the {\em coinvariant algebra}, which is the quotient $P / I$
of the polynomial algebra 
$P := \C[x_1,\dots,x_d]$, graded so each $x_i$ is in degree 2,
by the ideal $I$ generated by the homogeneous symmetric polynomials 
of positive degree.
To fix a specific isomorphism,
let $\widetilde{U}_i$ 
be the sub-bundle of the trivial vector bundle $\C^d \times X
\rightarrow X$ 
having fiber $U_i$ over the point
$(U_0,\dots,U_d) \in X$.
Then there is a unique graded algebra
isomorphism \begin{equation*}
\phi:C \stackrel{\sim}{\longrightarrow} H^*(X\cfs)
\end{equation*}
with $\phi(x_i)= - c_1(\widetilde{U}_i / \widetilde{U}_{i-1}) \in H^2(X\cfs)$
for each $i=1,\dots,d$;
see e.g. \cite[$\S$10.2, Proposition 3]{Ful}.

Associated to $\lambda \in \Lambda^+(n,d)$, we have the
{\em Springer fiber} $X^\lambda$, which is the closed subvariety of $X$
consisting of
all flags annihilated by the nilpotent matrix $x^\lambda$ 
of Jordan type $\lambda^T$, so
\begin{equation*}
X^\lambda = \{(U_0,\dots,U_d) \in X\:|\: x^\lambda U_i \subseteq U_{i-1}\text{ for each }i=1,\dots,d\}.
\end{equation*}
The pull-back homomorphism 
$i^*:H^*(X\cfs) \rightarrow H^*(X^\lambda\cfs)$
arising from the 
inclusion $i:X^\lambda \hookrightarrow X$
is surjective. In \cite{DP}, De Concini and Procesi computed generators for
the ideal of $C$ that maps to $\ker i^*$ under the isomorphism $\phi$, thus obtaining an explicit presentation for $H^*(X^\lambda\cfs)$.
Soon after that, a slightly simplified description was given by Tanisaki in \cite{T}, as follows.
Let 
$C^\lambda := P / I^\lambda$ where
$I^\lambda$ is the ideal generated by the elementary symmetric functions
\begin{equation*}
\left
\{e_r(x_{i_1},\dots,x_{i_m})\:\bigg|\:
\begin{array}{l}
m \geq 1, 1 \leq i_1<\cdots<i_m \leq d,\\
r > m - \lambda_{d-m+1}-\cdots-\lambda_n
\end{array}
\right\}.
\end{equation*}
Then there is a unique isomorphism
$\bar\phi:C^\lambda\stackrel{\sim}{\rightarrow} H^*(X^\lambda\cfs)$
making the diagram
\begin{equation}\label{diag1}
\begin{CD}
C&@>\phi>>&H^*(X\cfs)\\
@VVV&&@VVi^* V\\
C^\lambda& @>\bar\phi>>&H^*(X^\lambda\cfs)
\end{CD}
\end{equation}
commute, where the left hand map is the canonical quotient map 
which makes sense because
$I \subseteq I^\lambda$.

The symmetric group $S_d$ acts on $P$, hence also on
the quotients $C$ and $C^\lambda$, by algebra automorphisms
so that $w \cdot x_i = x_{w(i)}$ for $w \in S_d$.
Using the isomorphism $\phi$, we get induced an $S_d$-action 
on $H^*(X\cfs)$, which has (at least) two geometric definitions:
the classical one
as in \cite[$\S$13.1]{Jantzen}, and another 
more sophisticated construction originating in the remarkable work of Springer \cite{S1,S2}
(see also \cite[$\S$13.6]{Jantzen}).
Springer's construction yields also an action
of $S_d$ on $H^*(X^\lambda\cfs)$, uniquely determined by the property that 
the surjection $i^*$ is $S_d$-equivariant (see \cite[Theorem 1.1]{HS}
or \cite[$\S$13.13]{Jantzen}).
This plays an essential role in
the derivation of the De Concini-Procesi-Tanisaki presentation.
It is also one of the reasons the cohomology algebras $H^*(X^\lambda\cfs)$ are so interesting:
the top degree cohomology is isomorphic as an $S_d$-module to the irreducible 
representation $S(\lambda^T)$
indexed by the transpose partition $\lambda^T$ (see \cite[Proposition 2.7]{HS}
or \cite[$\S$13.16]{Jantzen}).

The main goal of this article is to prove a parabolic analogue of
(\ref{diag1}).
Fix $\lambda \in \Lambda^+(n,d)$
as before
and also $\mu \in \Lambda(n,d)$.
We are going to replace the flag variety $X$ by
the {\em partial flag variety} $X_\mu$ consisting
of flags
$(V_0,\dots,V_n)$ of type $\mu$ in $\C^d$, so 
\begin{equation*}
\{0\} = V_0 \leq V_1 \leq\cdots\leq V_n = \C^d,
\qquad
\dim V_i / V_{i-1} = \mu_i,
\end{equation*}
and to replace the Springer fiber $X^\lambda$
by the {\em Spaltenstein variety} 
\begin{equation*}
X^\lambda_\mu = \{(V_0,\dots,V_n) \in X_\mu\:|\:x^\lambda V_i \subseteq V_{i-1}\text{ for each }i=1,\dots,n\}.
\end{equation*}
Our main result gives an explicit presentation for the cohomology algebra
of the Spaltenstein variety.

To formulate this,
we must first recall the classical description of
the cohomology algebra $H^*(X_\mu\cfs)$.
Let $P_\mu := P^{S_\mu\inv}$ be the subalgebra of $P$
consisting of all $S_\mu$-invariants, where $S_\mu$ denotes
the parabolic subgroup $S_{\mu_1}\times\cdots\times S_{\mu_n}$ of 
$S_d$. For 
$1 \leq i_1 < \cdots < i_m \leq n$ and $r \geq 1$, we let
$e_r(\mu;i_1,\dots,i_m)$ and $h_r(\mu;i_1,\dots,i_m)$
denote 
the $r$th elementary and complete symmetric functions
in the variables $X_{i_1}\cup\cdots\cup X_{i_m}$, where
$$
X_j = \{x_k\:|\:\mu_1+\cdots+\mu_{j-1}+1 \leq k \leq \mu_1+\cdots+\mu_j\}.
$$
We interpret 
$e_r(\mu;i_1,\dots,i_m)$ and $h_r(\mu;i_1,\dots,i_m)$
as $1$ if $r = 0$ and
as $0$ if $r < 0$.
Note also that
\begin{align}\label{easyexp2}
e_r(\mu;i_1,\dots,i_m) = \sum_{r_1+\cdots+r_m=r}
e_{r_1}(\mu;i_1) \cdots e_{r_m}(\mu;i_m),\\
h_r(\mu;i_1,\dots,i_m) = \sum_{r_1+\cdots+r_m=r}
h_{r_1}(\mu;i_1) \cdots h_{r_m}(\mu;i_m).
\label{easyexp}
\end{align}
The algebra $P_\mu$ is freely generated either by 
$\{e_r(\mu;i)\:|\:1 \leq i \leq n, 1\leq r \leq \mu_i\}$
or by
$\{h_r(\mu;i)\:|\:1 \leq i \leq n, 1\leq r \leq \mu_i\}$.
Let $I_\mu$ be the ideal of $P_\mu$ generated by 
homogeneous symmetric polynomials of positive degree. Set $C_\mu := P_\mu / I_\mu$.
Then there is a unique isomorphism
\begin{equation*}
\psi:C_\mu \stackrel{\sim}{\longrightarrow} H^*(X_\mu\cfs)
\end{equation*}
with $\psi(e_r(\mu;i)) = (-1)^r c_r(\widetilde V_i / \widetilde V_{i-1})$
for each $i=1,\dots,n$ and $r > 0$, where
 $\widetilde V_i$ denotes the sub-bundle of the trivial
vector bundle $\C^d\times X_\mu \rightarrow X_\mu$ with fiber
$V_i$ over $(V_0,\dots,V_n) \in X_\mu$. 

Let $C^\lambda_\mu := P_\mu / I^\lambda_\mu$ where 
$I^\lambda_\mu$ is the ideal of $P_\mu$ generated by the elements
\begin{equation}\label{rel1}
\left\{h_r(\mu;i_1,\dots,i_{m})\:\bigg|
\begin{array}{l}
m \geq 1, \:\:1 \leq i_1<\cdots< i_{m} \leq n,\\
r > \lambda_1+\cdots+\lambda_{m} - \mu_{i_1}-\cdots-
\mu_{i_{m}}\end{array}\right\}.
\end{equation}
Equivalently (see \cite[Lemma 2.2]{duke}) $I^\lambda_\mu$ is generated by
\begin{equation}\label{rel2}
\left\{e_r(\mu;i_1,\dots,i_{m})\:\Bigg|
\begin{array}{l}
m \geq 1, \:\: 1 \leq i_1<\cdots<i_{m}\leq n,\\
r > \mu_{i_1}+\cdots+\mu_{i_{m}} - \lambda_{l+1}-\cdots-\lambda_n\\
\text{where }l:=\#\{i \:|\: \mu_i > 0, i \neq i_1,\dots,i_m\}
\end{array}\right\},
\end{equation}
from which it is easy to see that $I^\lambda_\mu = I^\lambda$, hence
$C^\lambda_\mu = C^\lambda$, if $\mu$ is regular (all parts $\leq 1$).
Finally let $j:X^\lambda_\mu \hookrightarrow X_\mu$ be the inclusion, and note that the pull-back $j^*:H^*(X_\mu\cfs)\rightarrow H^*(X^\lambda_\mu\cfs)$ is surjective; see $\S$\ref{spaving}.

\begin{Theorem}\label{mt1}
There is a unique isomorphism
$\bar\psi:C^\lambda_\mu \stackrel{\sim}{\rightarrow} H^*(X^\lambda_\mu\cfs)$
making the diagram
\begin{equation}\label{diag2}
\begin{CD}
C_\mu&@> \psi >>&H^*(X_\mu\cfs)\\
@VVV&&@VVj^*V\\
C^\lambda_\mu&@>\bar\psi>>&H^*(X^\lambda_\mu\cfs)
\end{CD}
\end{equation}
commute, where
the left hand map is the canonical quotient map 
which makes sense because $I_\mu \subseteq I^\lambda_\mu$.
\end{Theorem}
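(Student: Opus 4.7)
The plan is to deduce the theorem from the surjectivity of $j^*$ (proved in Section~\ref{spaving} via an affine paving of $X^\lambda_\mu$). Precomposing with $\psi$ and lifting to $P_\mu$ yields a surjection $P_\mu \twoheadrightarrow H^*(X^\lambda_\mu\cfs)$, and it suffices to identify its kernel with $I^\lambda_\mu$: the induced map $\bar\psi: C^\lambda_\mu \to H^*(X^\lambda_\mu\cfs)$ is then automatically an isomorphism and is uniquely characterized by the requirement that (\ref{diag2}) commutes (since $C^\lambda_\mu$ is a quotient of $P_\mu$).

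For the containment $I^\lambda_\mu \subseteq \ker$, I would work with the $e_r$-form (\ref{rel2}) of the generators. Under $\psi$, the image of $e_r(\mu;i_1,\ldots,i_m)$ is, up to a sign, the Chern class $c_r$ of the rank-$M$ bundle $\bigoplus_{j=1}^m \widetilde V_{i_j}/\widetilde V_{i_j-1}$ on $X_\mu$ (with $M := \mu_{i_1}+\cdots+\mu_{i_m}$), by (\ref{easyexp2}) and the splitting principle. To show this vanishes when restricted to $X^\lambda_\mu$ for $r > M - \lambda_{l+1}-\cdots-\lambda_n$, I would exhibit a sub-bundle of the above bundle of rank at least $\lambda_{l+1}+\cdots+\lambda_n$. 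Such a sub-bundle should arise from the defining constraints $x^\lambda \widetilde V_i \subseteq \widetilde V_{i-1}$ combined with iterates of $x^\lambda$, whose rank drops on each fiber are controlled by the Jordan type $\lambda^T$; the Whitney sum formula then forces the stated vanishing of $c_r$.

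The reverse inclusion $\ker \subseteq I^\lambda_\mu$ reduces, via the surjection $C^\lambda_\mu \twoheadrightarrow H^*(X^\lambda_\mu\cfs)$ obtained from the previous step, to the dimension inequality $\dim_\C C^\lambda_\mu \leq \dim_\C H^*(X^\lambda_\mu\cfs)$. The right-hand side is the number of cells of the affine paving and admits a combinatorial enumeration (for instance by row-strict fillings of $\lambda^T$ of content $\mu$). Bounding the left-hand side is the main obstacle: a natural approach is a straightening algorithm using the $h_r$-form (\ref{rel1}) of the relations, but the substantial overlap among the relations indexed by subsets $\{i_1 < \cdots < i_m\}$ makes a direct combinatorial manipulation delicate. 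A cleaner route might be induction on the number of parts of $\mu$ by collapsing adjacent parts so as to reduce toward the already-established Springer fiber case (\ref{diag1}); alternatively a flat-family or equivariant cohomology argument could transport dimension information from the known $C^\lambda$ to $C^\lambda_\mu$.
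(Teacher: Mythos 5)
Your overall architecture (show the generators of $I^\lambda_\mu$ die under $j^*\circ\psi$, use surjectivity of $j^*$ from the affine paving, then close the argument by a dimension count against the number of cells) is the same as the paper's, but both of the substantive steps have real gaps. The first is the vanishing of the relations (\ref{rel2}) for an \emph{arbitrary} index set $\{i_1<\cdots<i_m\}$. Your Chern-class argument needs not just a sub-bundle of rank $\geq \lambda_{l+1}+\cdots+\lambda_n$ inside $\bigoplus_j \widetilde V_{i_j}/\widetilde V_{i_j-1}$, but one with trivial (or at least vanishing) Chern classes; and such a piece is only visibly there when the index set is an initial segment, where $\operatorname{im}(x^\lambda)^{l}$ is a \emph{fixed} subspace of dimension $h=\lambda_{l+1}+\cdots+\lambda_n$ contained in $V_m$ at every point of $X^\lambda_\mu$ --- this is exactly the paper's Lemma~\ref{sq2} and the Tanisaki-style computation with $\sigma_{(1^r)}$. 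For a non-initial set (already for the single middle block $\{2\}$ when $n=3$, where the bundle is $\widetilde V_2/\widetilde V_1$ and the constraints are $V_1\subseteq\ker x^\lambda$, $\operatorname{im}x^\lambda\subseteq V_2$, $x^\lambda V_2\subseteq V_1$), the natural candidates built from $x^\lambda$ and the flag, such as $(\operatorname{im}x^\lambda + \widetilde V_1)/\widetilde V_1$ or $\ker(x^\lambda|_{V_2})/V_1$, have jumping fiber ranks, so they are not sub-bundles and Whitney's formula does not apply; nor does the needed bound follow formally from the initial- and final-segment cases, since inverting a total Chern class produces classes in all degrees. This is precisely where the paper invests its main effort: Lemma~\ref{tim} transports the relation for $(\mu;i_1,\dots,i_m)$ to the initial-segment relation for a rearranged composition $\nu$, and that lemma needs Theorem~\ref{mt2} (the canonical isomorphism $H^*(X^\lambda\cfs)^{S_\mu\anti}\cong H^*(X^\lambda_\mu\cfs)[-2d_\mu]$, proved in $\S$\ref{sbm} via the Fourier transform of the Borho--Macpherson isomorphism) together with the $S_d$-equivariance of $i^*$. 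It is the parabolic analogue of Tanisaki's use of the Springer action, and nothing in your sketch substitutes for it.

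The second gap is the inequality $\dim_\C C^\lambda_\mu \leq \dim_\C H^*(X^\lambda_\mu\cfs)$, which you correctly identify as the main remaining obstacle but do not prove. The paper settles it in $\S$\ref{sab} by an explicit spanning set $\{h(\mathtt{T})\:|\:\mathtt{T}\in\Col^\lambda_\mu\}$, whose construction is exactly the ``delicate straightening'' you anticipate (Lemma~\ref{hard} with its quasi-permutation identities, and Lemmas~\ref{T1}--\ref{T2}), matched against the cell count $\dim H^*(X^\lambda_\mu\cfs)=|\Col^\lambda_\mu|$ from Corollary~\ref{cd}. Your alternative suggestions are unlikely to be shortcuts: collapsing adjacent parts of $\mu$ changes both $C^\lambda_\mu$ and the target, and the natural comparison maps between $C^\lambda_\mu$ and $C^\lambda_{\mu'}$ for a refinement $\mu'$ have injectivity properties that are essentially equivalent to the theorem one is trying to prove, while a flat-family or equivariant argument would need to be set up from scratch. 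As it stands, your proposal proves (modulo the first gap) only that $I^\lambda_\mu\subseteq\ker(j^*\circ\psi)$, i.e.\ a surjection $C^\lambda_\mu\twoheadrightarrow H^*(X^\lambda_\mu\cfs)$, not the isomorphism claimed in the theorem.
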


The precise form of the relations (\ref{rel1})--(\ref{rel2}) 
was originally worked out in \cite{duke},
which is concerned with the centers of integral
blocks of parabolic category $\mathcal O$
for the general linear Lie algebra. These centers
provide another natural occurrence
of the algebras $C^\lambda_\mu \cong H^*(X^\lambda_\mu\cfs)$;
see also \cite[Theorem
4.1.1]{Stroppel} (which treats regular $\mu$ only) and \cite[Remark
9.10]{BLPPW} (for all $\mu$). 

We next formulate a result which plays an
essential role in the proof of Theorem~\ref{mt1}.
Let $p:X \rightarrow X_\mu$ be the projection
sending $(U_0, \dots, U_d) \in X$
to $(V_0, \dots, V_n) \in X_\mu$ where
$V_i = U_{\mu_1+\cdots+\mu_i}$ for each $i$.
It is classical that the pull-back $p^*$ 
defines a graded algebra isomorphism between
$H^*(X_\mu\cfs)$
and $H^*(X\cfs)^{S_\mu\inv}$. Moreover the diagram
\begin{equation}\label{invi}
\begin{CD}
C_\mu&@>\sim>>&C^{S_\mu\inv}\\
@V\psi VV&&@VV\phi V \\
H^*(X_\mu\cfs)&@>p^*>>&H^*(X\cfs)^{S_\mu\inv}
\end{CD}
\end{equation}
commutes, where the top map is induced by the inclusion 
$P_\mu \hookrightarrow P$.
Using Poincar\'e duality, it also makes sense to 
consider the push-forward
$p_*$ as a map in cohomology.
Let
\begin{equation*}
d_\mu := \dim X - \dim X_\mu = \frac{1}{2}\sum_{i=1}^n \mu_i(\mu_i-1).
\end{equation*}
Then $p_*$
restricts to give an
$H^*(X_\mu\cfs)$-module
{isomorphism}
between the space $H^*(X\cfs)^{S_\mu\anti}$ of
$S_\mu$-anti-invariants 
in $H^*(X\cfs)$
and the regular module $H^*(X_\mu\cfs)$. Thus, there is a commuting diagram
\begin{equation}\label{antii}
\begin{CD}
C^{S_\mu\anti}&@>\sim>>&C_\mu[- 2 d_\mu]\\
@V\phi VV&&@VV\psi V\\
H^*(X\cfs)^{S_\mu\anti}&@>p_*>>&H^*(X_\mu\cfs)[- 2 d_\mu],
\end{CD}
\end{equation}
where for a graded vector space $M$ we write $M[ i ]$
for the graded vector space obtained by shifting the grading down
by $i$, i.e.
$M[ i ]_j = M_{i+j}$.
Viewed as a $C_\mu$-module
via the isomorphism $C_\mu \cong C^{S_\mu\inv}$ from (\ref{invi}),
$C^{S_\mu\anti}$ is free of rank one
generated by the element
\begin{equation*}
\eps_\mu := \frac{1}{|S_\mu|}
\sum_{\substack{1 \leq i < j \leq d,\\\text{same }S_\mu\text{-orbit}}}
 (x_i - x_j).
\end{equation*}
The isomorphism at the top of (\ref{antii})
sends $x \eps_\mu \mapsto x$ for each $x \in C_\mu$;
see e.g. \cite[Lemma 3.2]{duke}.

\begin{Theorem}\label{mt2}
There is a unique homogeneous linear map
$\bar p_*$
making the following diagram commute:
\begin{equation}\label{phys}
\begin{CD}
H^*(X\cfs)&@>p_*>>&H^*(X_\mu\cfs)[- 2d_\mu]\\
@Vi^*VV&&@VVj^*V\\
H^*(X^\lambda\cfs)&@>\bar p_*>>&H^*(X^\lambda_\mu\cfs)[- 2d_\mu].
\end{CD}
\end{equation}
Moreover the restriction of 
$\bar p_*$ 
is an isomorphism
of graded vector spaces
$\bar p_*:H^*(X^\lambda\cfs)^{S_\mu\anti}\stackrel{\sim}{\rightarrow} 
H^*(X^\lambda_\mu\cfs)[- 2d_\mu]$.
\end{Theorem}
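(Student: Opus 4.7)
My plan is to realize $\bar p_*$ geometrically as the pushforward along the restriction of $p$ to the Spaltenstein varieties, and then deduce the anti-invariant isomorphism by a diagram chase using (\ref{antii}).

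First I would check that $p$ restricts to a proper map $p^\la : X^\la \to X^\la_\mu$: if $(U_0,\dots,U_d) \in X^\la$ and we set $V_i := U_{\mu_1+\cdots+\mu_i}$, then $x^\la V_i \subseteq U_{\mu_1+\cdots+\mu_i-1} \subseteq V_{i-1}$. Crucially, I claim the resulting commutative square $p\circ i = j\circ p^\la$ is a fiber square. A point of $X\times_{X_\mu} X^\la_\mu$ is a flag $(U_0,\dots,U_d)\in X$ satisfying $x^\la U_{\mu_1+\cdots+\mu_i}\subseteq U_{\mu_1+\cdots+\mu_{i-1}}$ for each $i$. For any $k$ with $\mu_1+\cdots+\mu_{i-1} < k \leq \mu_1+\cdots+\mu_i$, this gives
\[
x^\la U_k \;\subseteq\; x^\la U_{\mu_1+\cdots+\mu_i} \;\subseteq\; U_{\mu_1+\cdots+\mu_{i-1}} \;\subseteq\; U_{k-1},
\]
so the full Springer condition holds and the fiber product coincides with $X^\la$. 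Proper base change then yields $j^* p_* = (p^\la)_* i^*$ on cohomology, so taking $\bar p_* := (p^\la)_*$ makes (\ref{phys}) commute; uniqueness is immediate from surjectivity of $i^*$.

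For the anti-invariant isomorphism, let $\al : H^*(X_\mu\cfs) \stackrel{\sim}{\to} H^*(X\cfs)^{S_\mu\anti}$ denote the inverse of the iso $p_*$ from (\ref{antii}); under (\ref{invi}) and (\ref{antii}) it is explicitly $\al(z) = p^*(z)\cdot \eps_\mu$. Because $i^*$ is $S_\mu$-equivariant for the Springer action and surjective, it restricts to a surjection $H^*(X\cfs)^{S_\mu\anti} \twoheadrightarrow H^*(X^\la\cfs)^{S_\mu\anti}$. From the identity $i^* p^* = (p^\la)^* j^*$, for $z\in \ker j^*$ I get
\[
i^*(\al(z)) \;=\; (p^\la)^*(j^* z)\cdot i^*(\eps_\mu) \;=\; 0,
\]
so $\al$ restricts to an injection $\ker j^* \hookrightarrow (\ker i^*)\cap H^*(X\cfs)^{S_\mu\anti}$. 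Conversely, the commutative square from the first paragraph forces $p_*$ to send $(\ker i^*)\cap H^*(X\cfs)^{S_\mu\anti}$ into $\ker j^*$, and since $p_*\circ \al = \id$ these restrictions are mutually inverse. Thus $\al$ identifies $\ker j^*$ with $(\ker i^*)\cap H^*(X\cfs)^{S_\mu\anti}$, and passing to quotients converts the iso $p_*$ on anti-invariants into the required iso $\bar p_* : H^*(X^\la\cfs)^{S_\mu\anti} \stackrel{\sim}{\to} H^*(X^\la_\mu\cfs)[- 2d_\mu]$.

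The main obstacle I anticipate is the fiber-square verification in the first paragraph: the set-theoretic check itself is short, but the whole argument rests on it, since without it one has neither the geometric construction of $\bar p_*$ via base change nor the uniform degree shift by $-2d_\mu$. Once that is secured, the remainder is a formal diagram chase relying only on (\ref{antii}) together with the $S_\mu$-equivariance and surjectivity of $i^*$ coming from Springer's construction.
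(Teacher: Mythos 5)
Your strategy is genuinely different from the paper's, and most of it is sound. The paper proves both assertions by Fourier-transforming the Borho--MacPherson isomorphism (Theorem~\ref{mt3}) and only at the very end passes to stalks at $x^\lambda$; you work directly on the varieties. Your set-theoretic check that $X^\lambda=p^{-1}(X^\lambda_\mu)$, i.e.\ that $p\circ i=j\circ p^\lambda$ is Cartesian, is correct, and your second half is a genuine simplification: granted the commutativity of (\ref{phys}), the identity $i^*p^*=(p^\lambda)^*j^*$, the formula $x\mapsto p^*(x)\,\phi(\eps_\mu)$ for the inverse of $p_*$ on anti-invariants coming from (\ref{invi})--(\ref{antii}), the $S_d$-equivariance and surjectivity of $i^*$ (the known facts quoted in the introduction), and the surjectivity of $j^*$ (Corollary~\ref{cd}) do yield, exactly as you argue, that $p_*$ restricts to a bijection between $\ker i^*\cap H^*(X\cfs)^{S_\mu\anti}$ and $\ker j^*$, hence the required isomorphism on the quotients. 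No perverse-sheaf input beyond the existence of the Springer action is needed for that step, whereas the paper gets it from the sheaf-level statement of Theorem~\ref{mt3} together with Corollary~\ref{hob}.

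The gap is in the step you treat as automatic. The claim that ``proper base change yields $j^*p_*=(p^\lambda)_*i^*$ on cohomology'' is not meaningful as written: $X^\lambda$ and $X^\lambda_\mu$ are singular, so there is no Poincar\'e duality with which to define a wrong-way map $(p^\lambda)_*$ on ordinary cohomology, and topological proper base change is the sheaf-level isomorphism $j^{-1}Rp_*\mathbb{C}_X\cong R(p^\lambda)_*\mathbb{C}_{X^\lambda}$, which by itself says nothing about Gysin maps. To make your construction work you must first realize $p_*$ at the level of sheaves: since $p$ is a proper, locally trivial fibration with smooth compact fibre of complex dimension $d_\mu$, the adjunction counit gives a trace morphism $Rp_*\mathbb{C}_X[2d_\mu]\rightarrow\mathbb{C}_{X_\mu}$ whose induced map on hypercohomology agrees with the Poincar\'e-duality $p_*$ used in the statement (a standard but non-trivial compatibility); applying $j^{-1}$ to this morphism, invoking your Cartesian square and base change, and using naturality of the restriction maps in hypercohomology then produces $\bar p_*$ --- concretely, integration along the fibres of the restricted bundle $p^\lambda:X^\lambda\rightarrow X^\lambda_\mu$ --- and gives the commutativity of (\ref{phys}). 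This is the analogue of the ``naturality in the proper base change theorem'' argument at the end of $\S$\ref{sbm}. So the real content of the first statement is exactly this point, which your proposal asserts rather than proves; once it is supplied, your route is valid and bypasses the Fourier transform machinery, at the cost of not obtaining the stronger sheaf-theoretic Theorem~\ref{mt3} that the paper establishes en route.
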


The existence of an isomorphism
$H^*(X^\lambda\cfs)^{S_\mu\anti}\cong H^*(X^\lambda_\mu\cfs)[- 2d_\mu]$ was established already by Borho and Macpherson 
\cite[Corollary 3.6(b)]{BM}. 
The point of Theorem~\ref{mt2} is to make this isomorphism canonical; see $\S$\ref{sbm}.
Granted Theorem~\ref{mt2}, 
one possible approach to the proof of Theorem~\ref{mt1}
is sketched in \cite[Remark 4.6]{duke}. 
The proof of
Theorem~\ref{mt1} given in $\S$\ref{stan} below is quite different and 
gives a more natural
explanation of the relations;
it is a generalization of
Tanisaki's original argument in \cite{T}.
In the course of the proof, we also obtain a homogeneous algebraic basis
for $H^*(X^\lambda_\mu\cfs)$ indexed by certain $\lambda$-tableaux of
type $\mu$, which is of independent interest; 
see $\S$\ref{sab}.

We point out finally that
the algebras $H^*(X^\lambda_\mu\cfs)$ arise in
Ginzburg's construction of analogues of the Springer representations
for the general linear Lie algebra; see \cite{Gi, BG}.
In particular, there is a natural way to define an action of
$\mathfrak{gl}_n(\C)$ on the direct sum 
of the $H^*(X^\lambda_\mu\cfs)$'s for all $\mu \in \Lambda(n,d)$, so that the top cohomology
$$
\bigoplus_{\mu \in \Lambda(n,d)} H^{2d_\lambda-2d_\mu}(X^\lambda_\mu\cfs)
$$
is irreducible of highest
weight $\lambda$.
Following
the reformulation by Braverman and Gaitsgory \cite{BG} (see also
\cite[$\S$7]{Gi2}), this action can be constructed by applying
the signed Schur functor $V^{\otimes d} \otimes_{\C S_d} ?$
to the $S_d$-module $H^*(X^\lambda\cfs)$. In more detail,
let $V^{\otimes d}$
denote the $d$th tensor power of the natural $\mathfrak{gl}_n(\C)$-module
viewed as a right $\C S_d$-module 
so that $w \in S_d$ acts by
$$
(v_{1} \otimes \cdots \otimes v_{d})w
= \sgn(w) v_{w(1)} \otimes\cdots\otimes v_{w(d)}.
$$
For $\mu \in \Lambda(n,d)$, the $\mu$-weight space of 
$V^{\otimes d} \otimes_{\C S_d} H^*(X^\lambda\cfs)$ is canonically isomorphic
to $H^*(X^\lambda\cfs)^{S_\mu\anti}$; see \cite[Lemma 3.1]{duke}.
Using also Theorem~\ref{mt2},
we get a canonical vector space isomorphism
\begin{equation}\label{rhs}
V^{\otimes d} \otimes_{\C S_d} H^*(X^\lambda\cfs)
\cong \bigoplus_{\mu \in \Lambda(n,d)} H^*(X^\lambda_\mu\cfs),
\end{equation}
hence can transport the $\mathfrak{gl}_n(\C)$-module structure
from the left to the right hand space.
This yields the desired action.
Using the presentation 
for the algebras
$H^*(X^\lambda_\mu\cfs)$ from Theorem~\ref{mt1},
it is possible to give a purely algebraic construction (no cohomology)
of these representations, in the same spirit as the approach of
Garsia and Procesi to the type A Springer representations in \cite{GP}.
This is pursued further in \cite[$\S$4]{duke}, where one can find 
explicit algebraic formulae for the actions of the Chevalley generators
of $\mathfrak{gl}_n(\C)$ on the space on the right hand side of (\ref{rhs}).

\section{Affine paving}\label{spaving}

By an {\em affine paving} of a variety $M$, we mean
a partition of $M$ into disjoint subsets $M_i$
indexed by some finite poset $(I,\leq)$, such that the following hold
for all $i \in I$:
\begin{itemize}
\item $\bigcup_{j \leq i} M_j$ is closed in $M$;
\item $M_i$ (which is automatically locally closed) is isomorphic
as a variety to $\mathbb{A}^{d_i}$ for some $d_i \geq 0$.
\end{itemize}
Suppose we are given
$\lambda \in \Lambda^+(n,d)$ and
$\mu \in \Lambda(n,d)$.
The Spaltenstein variety $X^\lambda_\mu$ from the introduction 
was defined and studied originally in \cite{Sp}.
In that paper, Spaltenstein constructed an explicit affine paving of
the Springer fiber $X^\lambda$, and deduced from that a parametrization of the
irreducible components of 
$X^\lambda_\mu$. The goal in this section is to 
extend Spaltenstein's argument
slightly to produce an
affine paving of $X^\lambda_\mu$ itself;
actually we 
explain the dual construction 
which produces a more convenient parametrization
from a combinatorial point of view.
This is a widely known piece of folk-lore, but still we could not find 
it explicitly
written in the literature. 

We begin with a few more conventions regarding partitions.
We draw the Young diagram of a partition $\la$ in the usual
English way as in \cite{Mac}.
The sum of the parts of $\la$ is $|\la|$,
and $h(\la)$ denotes the {\em height} of $\la$, that is, the number of non-zero parts.
We write $\Par_k$ for the set of all partitions 
of height at most $k$, and
$\Par_{k,l}$ for the set of all partitions fitting into a $k \times l$-rectangle, i.e. $h(\lambda) \leq k$ and $\lambda_1 \leq l$.
For partitions $\la$ and $\mu$, we write
$\la \subseteq \mu$ if $\la_i \leq \mu_i$ for all $i$.
Also $\leq$ denotes the usual dominance ordering.
For $\mu \in \La(n,d)$, we let $\mu^+ \in \La^+(n,d)$ 
be the unique partition obtained from $\mu$ by rearranging the parts 
in weakly decreasing order.

Given $0 \leq k \leq d$, let $\Gr_{k,d}$ be the
Grassmannian of $k$-dimensional subspaces of $\C^d$.
To a partition $\gamma \in \Par_{k}$, we associate its
{\em column sequence} $(c_1,\dots,c_k)$
defined from
\begin{equation}\label{colseq}
\gamma_{k+1-i} =c_i - i.
\end{equation}
This gives a bijection between
$\Par_{k,d-k}$ and the set of all 
sequences $(c_1,\dots,c_k)$ with $1 \leq c_1 < \cdots < c_k \leq d$.
Given 
a fixed basis $f_1,\dots,f_d$ for $\C^d$
and $\gamma \in \Par_{k,d-k}$,
we have the {\em Schubert variety}
\begin{equation}\label{schub}
Y_\gamma := \{U \in \Gr_{k,d}\:|\:\dim U \cap \langle f_1,\dots,f_{c_i}\rangle \geq i
\text{ for }i=1,\dots,k\},
\end{equation}
where $(c_1,\dots,c_k)$ is the column sequence associated to $\gamma$;
see e.g. \cite[$\S$9.4]{Ful}.
This is an irreducible 
closed subvariety of dimension $|\gamma|$. Also
$Y_\gamma \subseteq Y_{\gamma'}$
if and only if 
$\gamma \subseteq \gamma'$.
The {\em Schubert cells}
$Y_\gamma^\circ := Y_\gamma \setminus \bigcup_{\gamma' \subsetneq \gamma}
Y_{\gamma'}$
give an affine paving of $\Gr_{k,d}$ indexed by the poset
$(\Par_{k,d-k},\subseteq)$.
In terms of coordinates, every $U \in Y_\gamma^\circ$ can be represented
as the span of the vectors
\begin{equation}\label{coords}
f_{c_i}+\sum_{\substack{1 \leq j \leq c_i\\ j \neq c_1,\dots,c_i}} 
a_{i,j} f_j\qquad(i=1,\dots,k)
\end{equation}
for unique $a_{i,j} \in \C$.
In particular, 
$Y_\gamma^\circ \cong \mathbb{A}^{|\gamma|}$.

In this section, we fix the basis $f_1,\dots,f_d$
as follows.
Let $e_1,\dots,e_d$ be the standard basis for $\C^d$.
If we identify the basis vectors $e_1,\dots,e_d$
with the
boxes of the Young diagram of $\lambda$
working up columns starting from the first (leftmost) column,
then the nilpotent matrix $x^\lambda$ of Jordan type $\lambda^T$ is the endomorphism of $\C^d$ sending
a basis vector to the one immediately below it in the diagram, or to zero
if it is at the bottom of its column.
Then
let $f_1,\dots,f_d$ be the basis for $\C^d$ obtained by reading
the boxes of the Young diagram in order along rows starting from the first 
(top) row.
For example:
\begin{equation}\label{piccy}
\lambda = (4,3,2)
\qquad\qquad
 \Youngdiagram{$e_3$&$e_6$&$e_8$&$e_9$\cr $e_2$&$e_5$&$e_7$\cr $e_1$&$e_4$\cr}
\qquad
 \Youngdiagram{$f_1$&$f_2$&$f_3$&$f_4$\cr $f_5$&$f_6$&$f_7$\cr $f_8$&$f_9$\cr}
\qquad\qquad
x^\lambda = \begin{CD}
@VVV
\end{CD}
\end{equation}
Also let $(.,.)$ be the symmetric bilinear form on $\C^d$ such that
$(f_i,f_j) = \delta_{i,j}$.
The following proposition provides the key induction step.

\begin{Proposition}\label{pavingl}
Suppose $n \geq 1$ and that 
 $k := \mu_n, s := \lambda_1$ and $t := \lambda_n$
satisfy $s \geq k \geq t$.
Let 
$\pi:X_\mu \rightarrow \Gr_{k,d}$ 
be the morphism
$(V_0,\dots,V_n) \mapsto V_{n-1}^\perp$.
Let $\overline\mu \in \Lambda(n-1,d-k)$
be obtained from $\mu$ by forgetting the last part $\mu_n$,
let
$\beta := ((s-k)^{k-t}) \in \Par_{k,d-k}$,
and take any $\gamma \in \Par_{k,d-k}$
with column sequence $(c_1,\dots,c_k)$.
\begin{itemize}
\item[(i)]
The restriction of $\pi$ defines a morphism 
$\bar\pi:X^\lambda_\mu \rightarrow Y_\beta.$
\item[(ii)]
There is an isomorphism
of varieties 
$f_\gamma:\pi^{-1}(Y_\gamma^\circ) \stackrel{\sim}{\longrightarrow}
Y_\gamma^\circ \times X_{\overline\mu}.$
\end{itemize}
Assume in addition that $\gamma \subseteq \beta$.
Let $\overline{\lambda} \in \Lambda^+(n-1,d-k)$ be the partition whose Young 
diagram is obtained by removing one box from the bottom of each of the columns
numbered $c_1=1,\dots,c_t=t,c_{t+1},\dots,c_k$ in the Young diagram of $\lambda$, then re-ordering
the columns to get a proper partition shape.
\begin{itemize}
\item[(iii)] For $(V_0,\dots,V_n) \in \pi^{-1}(Y_\gamma^\circ)$,
we have $x^\lambda(V_{n-1}) \subseteq x^\lambda(V_n) \subseteq V_{n-1}$,
and the restriction of $x^\lambda$ to $V_{n-1}$ is
of Jordan type $(\overline{\lambda})^T$.
\item[(iv)]
The isomorphism $f_\gamma$ can be chosen so that it restricts
to an isomorphism
$\bar f_\gamma:\bar\pi^{-1}(Y_\gamma^\circ)\stackrel{\sim}{\longrightarrow}
Y_\gamma^\circ \times X^{\overline\lambda}_{\overline\mu}$.
\end{itemize}
\end{Proposition}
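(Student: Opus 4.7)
My plan is to handle the four parts in order, with (iii) being the main content. For (i), I would translate the Spaltenstein conditions on $V_\bullet$ into sandwich conditions on $V_{n-1}^\perp$ via the bilinear form: $x^\lambda V_n \subseteq V_{n-1}$ dualises to $V_{n-1}^\perp \subseteq \ker((x^\lambda)^T) = \langle f_1,\dots,f_s\rangle$, while iterating $x^\lambda V_i \subseteq V_{i-1}$ gives $(x^\lambda)^{n-1} V_{n-1} \subseteq V_0 = 0$, hence $V_{n-1}^\perp \supseteq \im(((x^\lambda)^T)^{n-1}) = \langle f_1,\dots,f_t\rangle$; these are precisely the conditions defining $Y_\beta$. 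For (ii), I would view $\pi$ as a Zariski-locally-trivial fibre bundle with fibre $X_{\overline\mu}$ (once $V_{n-1}$ is fixed, $(V_0,\dots,V_{n-1})$ is a flag of type $\overline\mu$ in $V_{n-1}$) and invoke that any vector bundle on the affine Schubert cell $Y_\gamma^\circ \cong \mathbb{A}^{|\gamma|}$ is trivial to produce an algebraic trivialisation of the tautological rank-$(d-k)$ bundle $\{(U,U^\perp)\}$, thereby defining $f_\gamma$.

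For (iii), assuming $\gamma \subseteq \beta$, the inclusions $x^\lambda V_{n-1} \subseteq x^\lambda V_n \subseteq V_{n-1}$ follow at once: the second from $V_{n-1}^\perp \in Y_\gamma \subseteq Y_\beta$ (dualised as in (i)), and the first because $V_{n-1} \subseteq V_n$. For the Jordan type, I would first evaluate at the basepoint $U_\gamma := \langle f_{c_1},\dots,f_{c_k}\rangle$, where $V_{n-1}(U_\gamma) = \langle f_j : j \notin \{c_1,\dots,c_k\}\rangle$ is the span of the Young-diagram boxes obtained from $\lambda$ by deleting the top box of each of the columns $c_1,\dots,c_k$; then $x^\lambda$ restricts to a direct sum of column Jordan blocks whose sizes, as a partition, equal $(\overline\lambda)^T$ by construction. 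To extend to arbitrary $U \in Y_\gamma^\circ$, I would rewrite $\dim(V_{n-1}(U) \cap \ker(x^\lambda)^i)$ via the bilinear form as $\dim \ker(x^\lambda)^i - k + \dim(U \cap F_i)$, where $F_i := \im(((x^\lambda)^T)^i)$ is a coordinate subspace $\langle f_j : j \in E_i\rangle$ with $E_i = \{j : f_j \text{ lies in the first } \lambda^T_{\col(j)} - i \text{ rows of its column}\}$. The key combinatorial observation is that for $\gamma \subseteq \beta$, if a pivot $c_l$ lies in $E_i$ then every $j$ in the corresponding perturbation set $J_l$ from (\ref{coords}) also lies in $E_i$, because $c_l \in E_i$ forces $\lambda_{i+1} \geq c_l > j$ and hence $\lambda^T_j \geq i+1$; consequently $\dim U \cap F_i = \#\{l : c_l \in E_i\}$ depends only on $\gamma$, so the Jordan type remains $(\overline\lambda)^T$ across the cell.

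Part (iv) then follows by upgrading the trivialisation in (ii) to one that is $x^\lambda$-equivariant. By (iii), the family $\{(V_{n-1}(U), x^\lambda|_{V_{n-1}(U)})\}_{U \in Y_\gamma^\circ}$ has constant Jordan type $(\overline\lambda)^T$, so the principal bundle of Jordan-adapted frames over the affine $Y_\gamma^\circ$ has connected structure group (the centraliser of $x^{\overline\lambda}$ in $GL_{d-k}$, which is the unit group of an associative algebra) and is therefore trivial; this yields an algebraic family of isomorphisms $\phi_U : V_{n-1}(U) \stackrel{\sim}{\rightarrow} \C^{d-k}$ intertwining $x^\lambda|_{V_{n-1}(U)}$ with a fixed model $x^{\overline\lambda}$. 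Using $\phi_U$ in place of the arbitrary trivialisation, the Spaltenstein conditions $x^\lambda V_i \subseteq V_{i-1}$ for $i < n$ transport verbatim to the conditions defining $X^{\overline\lambda}_{\overline\mu}$ on the image flag, while the condition for $i = n$ is automatic by (iii), giving $\bar f_\gamma$. The main obstacle is the combinatorial constancy check in (iii); once that is in hand, the equivariant trivialisation underlying (iv) is routine.
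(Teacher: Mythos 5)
Your parts (i)--(iii) are correct, and (iii) is argued by a genuinely different route than the paper's: instead of exhibiting an $x^\lambda$-equivariant identification of $V_{n-1}$ with a fixed model, you show that the numbers $\dim\bigl(V_{n-1}\cap \ker (x^\lambda)^i\bigr)$ are constant over the cell, via the duality identity $\dim(U^\perp\cap \ker(x^\lambda)^i)=\dim\ker(x^\lambda)^i-k+\dim(U\cap F_i)$ and the observation that every perturbation index $j<c_l$ lies in $E_i$ whenever $c_l$ does (this uses $\gamma\subseteq\beta$, so that all indices involved label first-row boxes). That pins the Jordan type to its value at the base point $\langle f_{c_1},\dots,f_{c_k}\rangle$, which is $(\overline\lambda)^T$ as you say; this is a valid, self-contained alternative to the paper's computation.

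The gap is in (iv). What is needed there is an algebraic family of $x^\lambda$-equivariant isomorphisms $V_{n-1}(U)\stackrel{\sim}{\to}\C^{d-k}$ over the whole cell, and you derive it from the principle that ``a principal bundle over affine space with connected structure group is trivial.'' That is not a quotable principle: connectedness is not what makes torsors over $\mathbb{A}^N$ trivial (already for $GL_m$ this is the Quillen--Suslin theorem, and no such statement is available for arbitrary connected groups), and, more fundamentally, before any triviality theorem could be applied you would have to know that your ``bundle of Jordan-adapted frames'' is a Zariski-locally trivial torsor at all, i.e.\ that equivariant trivialisations exist locally and vary algebraically with $U$. Constancy of the fibrewise Jordan type from (iii) does not by itself produce algebraically varying intertwiners --- that existence statement is exactly the content of (iv), so the key step is unsupported as written. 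It can be repaired (the centraliser of $x^{\overline\lambda}$ is an extension of a product of general linear groups by a unipotent group, so one could combine a local-triviality argument with Quillen--Suslin and vanishing of unipotent torsors on affine bases), but the clean fix is the paper's explicit construction: for $U\in Y_\gamma^\circ$ with coordinates as in (\ref{coords}), define $g(U)\in GL_d(\C)$ by sending each $f_{c_i}$ to the corresponding spanning vector of $U$, fixing the other $f_j$ with $j\leq s$, and extending by compatibility with $x^\lambda$ on the basis $\{(x^\lambda)^a f_j\}$; then $g(U)^T$ commutes with $x^\lambda$, depends polynomially on the cell coordinates, and maps $V_{n-1}=U^\perp$ onto $\langle f_{c_1},\dots,f_{c_k}\rangle^\perp$, which yields the equivariant trivialisation needed for (iv) (and gives (iii) for free). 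Incidentally, your appeal to triviality of vector bundles on $\mathbb{A}^{|\gamma|}$ in (ii) is likewise avoidable: $U^\perp$ has the explicit algebraic frame $f_j-\sum_l a_{l,j}f_{c_l}$, $j\notin\{c_1,\dots,c_k\}$.
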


\begin{proof}
Note that $Y_\beta = \{U \in \Gr_{k,d}\:|\:
\langle f_1,\dots,f_t \rangle \subseteq 
U \subseteq \langle f_1,\dots,f_s\rangle\}$.
Take $(V_0,\dots,V_n) \in X^\lambda_\mu$.
Then $x^\lambda(V_n) \subseteq V_{n-1}$
and $(x^\lambda)^{n-1} (V_{n-1}) = \{0\}$,
or equivalently, 
$\langle f_1,\dots,f_t \rangle = 
(\ker\: (x^\lambda)^{n-1})^\perp\subseteq
V_{n-1}^\perp \subseteq (\im\: x^\lambda)^\perp 
= \langle f_1,\dots,f_s\rangle$.
This shows $\pi(X^\lambda_\mu) \subseteq Y_\beta$, establishing (i).

Now we'll prove (ii), (iii) and (iv) all under the assumption that
$\gamma \subseteq \beta$, noting that (ii) for more general $\gamma$
is actually a particular case of (iv) on replacing 
$\lambda$ by the partition of $d$
with just one non-zero part.

The subspace $\langle f_{c_1},\dots,f_{c_k}\rangle$
belongs to $Y_\gamma^\circ$.
Take $U \in Y_\gamma^\circ$
represented by the coordinates $(a_{i,j})$ according to (\ref{coords}).
The non-zero vectors 
of the form 
$(x^\lambda)^i (f_j)$ for $i \geq 0$ and $1 \leq j \leq s$ form a basis
for $\C^d$.
Hence there exists a unique matrix $g(U) \in GL_d(\C)$
such that
\begin{itemize}
\item $g(U)(f_{c_i}) = 
f_{c_i} + \sum_{j \in \{1,\dots,c_i\}\setminus\{c_1,\dots,c_i\}} a_{i,j} f_j$ for each $i=1,\dots,k$;
\item $g(U)(f_j) = f_j$ for each $j \in \{1,\dots,s\}\setminus\{c_1,\dots,c_k\}$;
\item $g(U)(x^\lambda (f_j)) = x^\lambda (g(U)(f_j))$
for any $1 \leq j \leq d$ such that $x^\lambda (f_j) \neq 0$.
\end{itemize}
The transpose matrix $(x^\lambda)^T$ annihilates $f_1,\dots,f_s$,
and $(x^\lambda)^T(x^\lambda(f_j)) = f_j$ whenever
$x^\lambda(f_j) \neq 0$.
Using this we see that $g(U)$ commutes with
$(x^\lambda)^T$, or equivalently, $g(U)^T$ commutes with $x^\lambda$.
Moreover 
$g(U)(\langle f_{c_1},\dots,f_{c_k}\rangle) = U$,
hence $g(U)^T(U^\perp) = \langle f_{c_1},\dots,f_{c_k}\rangle^\perp$.

The restriction of $x^\lambda$ to 
the space $\langle f_{c_1},\dots,f_{c_k}\rangle^\perp$
is of Jordan type $(\overline \lambda)^T$.
Hence we can pick an isomorphism
$\theta:\langle f_{c_1},\dots,f_{c_k} \rangle^\perp
\rightarrow \C^{d-k}$ such that 
$\theta \circ x^\lambda = x^{\overline\lambda} \circ \theta$.
Then define
$f_\gamma:\pi^{-1}(Y_\gamma^\circ) \rightarrow Y_\gamma^\circ \times X_{\overline{\mu}}$ to be the map sending
$(V_0,\dots,V_n) \in \pi^{-1}(Y_\gamma^\circ)$
to $(U, (\overline{V}_0,\dots,\overline{V}_{n-1})) \in Y_\gamma^\circ
\times X_{\overline{\mu}}$, where $U = V_{n-1}^\perp$ and
$\overline{V}_i = \theta(g(U)^T(V_i))$.
This makes sense because $g(U)^T(V_{n-1}) = \langle f_{c_1},\dots,f_{c_k}\rangle^\perp$. It is clear that the map $f_\gamma$ is invertible, hence it is an
isomorphism of varieties as in (ii).

To deduce (iii),
take $(V_0,\dots,V_n) \in \pi^{-1}(Y_\gamma^\circ)$.
We have that $V_{n-1}^\perp
\in Y_\gamma^\circ \subseteq Y_\beta$, hence
$V_{n-1}^\perp \subseteq \langle f_1,\dots,f_s\rangle$.
This shows that
$x^\lambda(V_n) = \langle f_1,\dots,f_s\rangle^\perp \subseteq
V_{n-1}$,
so in particular $x^\lambda$ leaves $V_{n-1}$ invariant.
Moreover multiplication by $g(V_{n-1}^\perp)^T$ is an $x^\lambda$-equivariant
isomorphism between $V_{n-1}$ and
$\langle f_{c_1},\dots,f_{c_k}\rangle^\perp$.
Hence the Jordan type of $x^\lambda$ on $V_{n-1}$ is the same
as its Jordan type on $\langle f_{c_1},\dots,f_{c_k}\rangle^\perp$, namely
$(\overline{\lambda})^T$.

Finally, for (iv),
suppose we are given $(V_0,\dots,V_n) \in \pi^{-1}(Y_\gamma^\circ)$
mapping to $(U, (\overline{V}_0,\dots,\overline{V}_{n-1}))$
under the isomorphism $f_\gamma$.
We need to show that $(V_0,\dots,V_n) \in X^\lambda_\mu$
if and only if $(\overline{V}_0,\dots,\overline{V}_{n-1}) \in X^{\overline{\lambda}}_{\overline{\mu}}$.
By (iii), we know automatically that $x^\lambda(V_n) \subseteq V_{n-1}$.
Hence we just need to check for each $i=1,\dots,n-1$ that
$x^\lambda(V_i) \subseteq V_{i-1}$ if and only if
$x^{\overline{\lambda}}(\overline{V}_i) \subseteq \overline{V}_{i-1}$.
This is clear as $x^{\overline{\lambda}}\circ \theta \circ g(U)^T
= \theta \circ g(U)^T \circ x^\lambda$ on $V_{n-1}$.
\end{proof}

By a {\em column-strict} 
(resp.\ {\em semi-standard}) $\lambda$-tableau 
we mean a filling of the boxes
of the Young diagram of $\lambda$ by integers so that entries are strictly
increasing down columns (resp.\ strictly increasing down columns and 
weakly increasing along rows).
Let $\Col^\lambda_\mu$ (resp.\ $\Std^\lambda_\mu$) denote the set of all
column-strict (resp.\ semi-standard) $\lambda$-tableaux
that have exactly $\mu_i$ entries equal to $i$
for each $i=1,\dots,n$.
The following definition will also be needed in the next section.

\begin{Definition}\label{combdef}\rm
Suppose that $n \geq 1$ and we are given $\mathtt{T} \in \Col^\lambda_\mu$.
The existence of $\mathtt{T}$ implies
that 
the integers $k$, $s$, $t$ defined as in Proposition~\ref{pavingl}
satisfy $s\geq k \geq t$.
We define
$(c_1,\dots,c_k)$,
$\gamma$,
$\overline{\mu}$,
$\overline{\mathtt{T}}$, and
$\overline{\lambda}$
as follows:
\begin{itemize}
\item
$1 \leq c_1 < \cdots < c_k \leq s$ 
index the columns of $\mathtt{T}$ containing entry $n$;
\item
$\gamma \in \Par_{k,d-k}$ is the partition with column 
sequence $(c_1,\dots,c_k)$;
\item
$\overline\mu \in \Lambda(n-1,d-k)$ is defined 
by forgetting the last part of $\mu$;
\item $\overline{\mathtt{T}}$ 
is obtained by removing all boxes labelled by $n$ from $\mathtt{T}$,
then repeatedly 
interchanging columns $i$ and $(i+1)$
whenever 
the $i$th column contains fewer boxes than the $(i+1)$th 
column for some $i$;
\item $\overline\lambda \in \Lambda^+(n-1,d-k)$ is 
the shape of $\overline{\mathtt{T}}$,
so $\overline{\mathtt{T}} \in \Col^{\bar\lambda}_{\bar\mu}$.
\end{itemize}
\end{Definition}

\begin{Example}\label{anex}\rm
If $n=6$
and $\mathtt{T} = \youngdiagram{
$\scriptstyle 2$&$\scriptstyle 1$&$\scriptstyle 2$&$\scriptstyle 2$\cr
$\scriptstyle 3$&$\scriptstyle 2$&$\scriptstyle 4$\cr
$\scriptstyle 4$&$\scriptstyle 4$&$\scriptstyle 6$\cr
$\scriptstyle 6$&$\scriptstyle 5$\cr
}
$ 
then $(c_1,c_2) = (1,3)$ and
$
\overline{\mathtt{T}} = \youngdiagram{
$\scriptstyle 1$&$\scriptstyle 2$&$\scriptstyle 2$&$\scriptstyle 2$\cr
$\scriptstyle 2$&$\scriptstyle 3$&$\scriptstyle 4$\cr
$\scriptstyle 4$&$\scriptstyle 4$\cr
$\scriptstyle 5$\cr
}.
$
\end{Example}

Now we make several definitions by induction on $n$.
Suppose first that $n=d=0$.
We define the {\em degree} $\deg(\mathtt{T})$ of the unique tableau 
$\mathtt{T} \in \Col^\lambda_\mu$  (the empty tableau)
to be zero, let $Y_{\mathtt{T}}^\circ := X^\lambda_\mu$ (which is a 
single point), and let $\preceq$ be the trivial partial order
on $\Col^\lambda_\mu$.
Now suppose that $n \geq 1$, take $\mathtt{T} \in \Col^\lambda_\mu$
and define $\gamma$ and $\overline{\mathtt{T}}$ as in Definition~\ref{combdef}.
Then set
\begin{equation}\label{degdef}
\deg(\mathtt{T}) := |\gamma|+\deg(\overline{\mathtt{T}}).
\end{equation}
For example, the tableau $\mathtt{T}$ from Example~\ref{anex}
is of degree $2$.
Also define a subset
$Y_{\mathtt{T}}^\circ \subseteq X^\lambda_\mu$ 
by setting
\begin{equation}\label{pavingd}
Y_{\mathtt{T}}^\circ := \bar f_\gamma^{-1}(Y_\gamma^\circ \times Y_{\overline{\mathtt{T}}}^\circ),
\end{equation}
where $\bar f_\gamma$ is the map chosen in Proposition~\ref{pavingl}(iv).
Finally define a partial order $\preceq$ on $\Col^\lambda_\mu$ by declaring that
$\mathtt{T} \preceq \mathtt{T}'$ if $\gamma \subsetneq \gamma'$,
or $\gamma = \gamma'$ and
$\overline{\mathtt{T}} \preceq \overline{\mathtt{T}}'$,
where $\gamma'$
and $\overline{\mathtt{T}}'$ are defined using Definition~\ref{combdef} again but starting
from $\mathtt{T}'$.

\begin{Theorem}\label{pavingt}
The sets $Y_{\mathtt{T}}^\circ$
give an affine paving of $X^\lambda_\mu$
indexed by the poset $(\Col^\lambda_\mu, \preceq)$, such that
$Y_{\mathtt{T}}^\circ \cong \mathbb{A}^{\deg(\mathtt{T})}$.
Moreover the complement $X_\mu \setminus X^\lambda_\mu$ also has an affine paving.
\end{Theorem}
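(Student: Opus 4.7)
The plan is to argue by induction on $n$. The base case $n=0$ is trivial: the only element of $\Col^\lambda_\mu$ is the empty tableau, $X^\lambda_\mu$ is a point, and the complement in $X_\mu$ is empty.

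For the inductive step, the idea is to use Proposition~\ref{pavingl} as a fibration-by-fibration reduction. The Schubert decomposition $\Gr_{k,d} = \bigsqcup_{\gamma \in \Par_{k,d-k}} Y_\gamma^\circ$ pulls back along the morphism $\bar\pi: X^\lambda_\mu \to Y_\beta$ from part~(i) to give
\[
X^\lambda_\mu = \bigsqcup_{\gamma \subseteq \beta} \bar\pi^{-1}(Y_\gamma^\circ),
\]
and by part~(iv) each piece is identified via $\bar f_\gamma$ with $Y_\gamma^\circ \times X^{\bar\lambda}_{\bar\mu}$, where $\bar\lambda$ is the partition determined by $\gamma$ as prescribed. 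The combinatorics matches this geometry: Definition~\ref{combdef} sets up a bijection between $\Col^\lambda_\mu$ and $\bigsqcup_{\gamma \subseteq \beta}\{\gamma\}\times \Col^{\bar\lambda}_{\bar\mu}$ (reassembling a pair $(\gamma,\bar{\mathtt{T}})$ into $\mathtt{T}$ by re-inserting boxes labelled $n$ at the bottoms of columns $c_1,\dots,c_k$), and $\deg(\mathtt{T}) = |\gamma| + \deg(\bar{\mathtt{T}})$ by definition. Applying the inductive affine paving $X^{\bar\lambda}_{\bar\mu} = \bigsqcup_{\bar{\mathtt{T}}} Y_{\bar{\mathtt{T}}}^\circ$ inside each $Y_\gamma^\circ \times X^{\bar\lambda}_{\bar\mu}$ then yields $X^\lambda_\mu = \bigsqcup_\mathtt{T} Y_\mathtt{T}^\circ$ with $Y_\mathtt{T}^\circ \cong \mathbb{A}^{|\gamma|+\deg(\bar{\mathtt{T}})} = \mathbb{A}^{\deg(\mathtt{T})}$.

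The step I expect to be the main obstacle is verifying the closure condition. Given $\mathtt{T}$ with associated $\gamma$ and $\bar{\mathtt{T}}$, the set $\bigcup_{\mathtt{T}' \preceq \mathtt{T}} Y_{\mathtt{T}'}^\circ$ splits as $Z_1 \cup Z_2$, where $Z_1 := \bar\pi^{-1}\bigl(\bigcup_{\gamma' \subsetneq \gamma} Y_{\gamma'}\bigr)$ gathers the contributions with $\gamma' \subsetneq \gamma$ and $Z_2 := \bar f_\gamma^{-1}\bigl(Y_\gamma^\circ \times \bigcup_{\bar{\mathtt{T}}' \preceq \bar{\mathtt{T}}} Y_{\bar{\mathtt{T}}'}^\circ\bigr)$ gathers those with $\gamma' = \gamma$. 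The set $Z_1$ is closed in $X^\lambda_\mu$ because $\bigcup_{\gamma' \subsetneq \gamma} Y_{\gamma'}$ is Schubert-closed and $\bar\pi$ is continuous, while $Z_2$ is closed in the open subvariety $\bar\pi^{-1}(Y_\gamma^\circ)$ by the inductive hypothesis. A short topological argument inside the closed subvariety $\bar\pi^{-1}(Y_\gamma) \subseteq X^\lambda_\mu$ (whose complement of $Z_1 \cup Z_2$ equals $\bar\pi^{-1}(Y_\gamma^\circ) \setminus Z_2$, which is open in $\bar\pi^{-1}(Y_\gamma^\circ)$ and hence open in $\bar\pi^{-1}(Y_\gamma)$) then shows that $Z_1 \cup Z_2$ is closed in $X^\lambda_\mu$.

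For the final assertion that $X_\mu \setminus X^\lambda_\mu$ admits an affine paving, I would run a parallel induction using Proposition~\ref{pavingl}(ii) in place of (iv). When $\gamma \not\subseteq \beta$, the entire cell $\pi^{-1}(Y_\gamma^\circ) \cong Y_\gamma^\circ \times X_{\bar\mu}$ lies in the complement, while when $\gamma \subseteq \beta$ the portion of the complement inside $\pi^{-1}(Y_\gamma^\circ)$ is identified by $f_\gamma$ with $Y_\gamma^\circ \times (X_{\bar\mu} \setminus X^{\bar\lambda}_{\bar\mu})$. By induction both $X_{\bar\mu}$ (the classical Schubert paving of a partial flag variety) and $X_{\bar\mu} \setminus X^{\bar\lambda}_{\bar\mu}$ admit affine pavings, so forming products with the $Y_\gamma^\circ \cong \mathbb{A}^{|\gamma|}$ and assembling over all $\gamma$, with closure order inherited from the Schubert order on the base and the inductive order on the fibre, produces the required paving.
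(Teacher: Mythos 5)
Your proposal is correct and follows essentially the same route as the paper: induction on $n$, pulling back the Schubert paving of $Y_\beta$ along $\bar\pi$ and using Proposition~\ref{pavingl}(iv) with the definition (\ref{pavingd}) to pave each $\bar\pi^{-1}(Y_\gamma^\circ)$, and using Proposition~\ref{pavingl}(ii) for the complement. The only difference is that you spell out the closure condition for the combined poset (your $Z_1\cup Z_2$ argument), which the paper leaves implicit in the standard "paving fibred over a paving" reduction.
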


\begin{proof}
Consider the map $\bar\pi:X^\lambda_\mu \rightarrow Y_\beta$
from Proposition~\ref{pavingl}(i).
Since $Y_\beta$ is paved by the affine spaces
$Y_\gamma^\circ$ for $\gamma \subseteq \beta$,
it suffices to show that the
inverse images 
$\bar\pi^{-1}(Y_\gamma^\circ)$ all have affine pavings.
This is follows by induction on $n$ using 
Proposition~\ref{pavingl}(iv) and the definition (\ref{pavingd}).
The same induction gives also that
$Y_{\mathtt{T}}^\circ \cong \mathbb{A}^{\deg(\mathtt{T})}$.
Finally, 
to see that the complement $X_\mu \setminus X^\lambda_\mu$
has an affine paving, we use also Proposition~\ref{pavingl}(ii)
to see that
the complement is the disjoint union of the
spaces $\pi^{-1}(Y_\gamma^\circ) \setminus \bar\pi^{-1}(Y_\gamma^\circ)
\cong Y_\gamma^\circ \times (X_{\overline{\mu}} \setminus X_{\overline{\mu}}^{\overline{\lambda}})$ for each $\gamma \subseteq \beta$,
each of which has an affine paving by induction, together
with $\pi^{-1}(Y_\gamma^\circ) \cong Y_\gamma^\circ \times X_{\overline{\mu}}$ for $\gamma \not\subseteq \beta$, 
which have affine pavings too.
\end{proof}

\begin{Corollary}\label{cd}
The cohomology $H^*(X^\lambda_\mu\cfs)$ vanishes in all odd degrees,
and in even degrees $\dim H^{2r}(X^\lambda_\mu\cfs)$ is equal to the number of
$\mathtt{T} \in \Col^\lambda_\mu$ with $\deg(\mathtt{T}) = r$.
Moreover the pull-back
$j^*:H^*(X_\mu\cfs) \rightarrow H^*(X^\lambda_\mu\cfs)$
is surjective.
\end{Corollary}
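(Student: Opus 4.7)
The plan is to read off both assertions from the affine paving of $X^\lambda_\mu$ (and of its complement in $X_\mu$) furnished by Theorem~\ref{pavingt}. For the first assertion, I would invoke the standard principle that a complex variety $M$ with an affine paving has singular cohomology $H^*(M;\C)$ concentrated in even degrees, with $\dim H^{2r}(M;\C)$ equal to the number of paving cells of complex dimension $r$. The proof is an induction on the number of cells: if $Z \subset M$ is the closed union of all but the top-indexed cell $U \cong \mathbb{A}^d$, then excision gives $H^*(M, Z;\C) \cong H^*_c(U;\C)$, which is one-dimensional and concentrated in degree $2d$; the long exact sequence of the pair combined with the inductive hypothesis on $Z$ forces $H^*(M;\C)$ to sit in even degrees with the expected dimensions. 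Applied to the paving $\{Y^\circ_{\mathtt{T}}\}_{\mathtt{T} \in \Col^\lambda_\mu}$ with $Y^\circ_{\mathtt{T}} \cong \mathbb{A}^{\deg(\mathtt{T})}$, this gives the claimed vanishing in odd degrees and the Betti number count.

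For surjectivity of $j^*$, I would use the long exact sequence of the pair $(X_\mu, X^\lambda_\mu)$. Since $X_\mu$ is compact and $X^\lambda_\mu \subseteq X_\mu$ is closed, excision identifies the relative cohomology $H^*(X_\mu, X^\lambda_\mu;\C)$ with the compactly supported cohomology $H^*_c(X_\mu \setminus X^\lambda_\mu;\C)$. A parallel induction shows that for any variety with an affine paving the compactly supported cohomology is likewise concentrated in even degrees, with each cell $\mathbb{A}^d$ now contributing to $H^{2d}_c$. Applying this to the affine paving of $X_\mu \setminus X^\lambda_\mu$ from Theorem~\ref{pavingt}, the odd compactly supported cohomology of the complement vanishes. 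Combined with the vanishing of $H^{\mathrm{odd}}(X^\lambda_\mu;\C)$ just established, the long exact sequence breaks into short exact sequences
$$0 \longrightarrow H^{2r}_c(X_\mu \setminus X^\lambda_\mu;\C) \longrightarrow H^{2r}(X_\mu;\C) \stackrel{j^*}{\longrightarrow} H^{2r}(X^\lambda_\mu;\C) \longrightarrow 0,$$
from which surjectivity is immediate.

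There is no real obstacle here: the corollary is essentially a formal consequence of the existence of affine pavings on both $X^\lambda_\mu$ and $X_\mu \setminus X^\lambda_\mu$, and both halves rest on the same inductive technique. The one point warranting care is arranging the induction so that successive closed unions of cells genuinely remain closed, which is guaranteed by the closure condition in the definition of affine paving applied to the poset $(\Col^\lambda_\mu, \preceq)$.
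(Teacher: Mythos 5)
Your proposal is correct and is essentially the paper's own argument: the paper disposes of this corollary by citing the standard fact (e.g.\ \cite[Corollary 2.3]{HS} or \cite[p.163]{Jantzen}) that an affine paving forces even-degree cohomology with cell-counting Betti numbers, and it uses the affine paving of the complement $X_\mu\setminus X^\lambda_\mu$ from Theorem~\ref{pavingt} for the surjectivity of $j^*$ exactly as you do. The only point to phrase more carefully is your opening ``standard principle'': for \emph{ordinary} cohomology it requires compactness (it fails already for $\mathbb{A}^d$ itself), but this is harmless here since $X^\lambda_\mu$ is projective and your induction only ever passes to closed, hence compact, subvarieties, while for the non-compact complement you correctly switch to compactly supported cohomology.
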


\begin{proof}
This is a standard consequence of the existence of an affine paving as in Theorem~\ref{pavingt}; see e.g. 
\cite[Corollary 2.3]{HS} or 
the discussion at the bottom of \cite[p.163]{Jantzen}.
\end{proof}

\begin{Corollary}\label{ne}
The variety
$X^\lambda_\mu$ is 
non-empty if and only if $\lambda \geq \mu^+$. 
Assuming that is the case,
$X^\lambda_\mu$ is connected.
\end{Corollary}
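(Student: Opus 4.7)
The plan is to deduce both assertions from the affine paving in Theorem~\ref{pavingt} together with its cohomological corollary, Corollary~\ref{cd}.

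For non-emptiness, Theorem~\ref{pavingt} reduces the claim to the combinatorial equivalence $\Col^\lambda_\mu \neq \emptyset \iff \lambda \geq \mu^+$. To prove $(\Rightarrow)$, given any $\mathtt{T} \in \Col^\lambda_\mu$ and any size-$i$ subset $I \subseteq \{1,\dots,n\}$, column-strictness forces each column of $\mathtt{T}$ to contain at most $i$ entries drawn from $I$; hence the total number $\sum_{j\in I} \mu_j$ of such entries is bounded by the number of boxes in the top $i$ rows of $\lambda$, i.e.\ by $\lambda_1+\cdots+\lambda_i$. Maximizing over $I$ yields $\sum_{k\leq i}\mu^+_k \leq \sum_{k\leq i} \lambda_k$, i.e.\ $\lambda \geq \mu^+$. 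For $(\Leftarrow)$, I would construct $\mathtt{T}$ by explicit recursion: at each stage, place the current largest entry $n$ at the bottoms of the leftmost $\mu_n$ columns of $\lambda$, then remove those boxes, resort to form the next shape, and iterate on the remaining content.

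For connectedness, Corollary~\ref{cd} gives $\dim H^0(X^\lambda_\mu\cfs) = \#\{\mathtt{T}\in\Col^\lambda_\mu : \deg(\mathtt{T})=0\}$, and since this dimension counts the connected components of a complex algebraic variety, the task is to show there is a \emph{unique} tableau of degree zero whenever $\lambda \geq \mu^+$. Unwinding the recursion $\deg(\mathtt{T}) = |\gamma| + \deg(\bar{\mathtt{T}})$ of~\eqref{degdef}, degree zero is equivalent to $\gamma = \emptyset$ at every stage of Definition~\ref{combdef}, meaning that at each stage the current maximal entry occupies the bottoms of the leftmost columns. This prescription specifies $\mathtt{T}$ uniquely, and it agrees with the tableau produced in the $(\Leftarrow)$ step above.

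The main obstacle is to verify that this recursion is consistent at every step: one needs (i) the hypotheses $s \geq k \geq t$ of Proposition~\ref{pavingl} to hold, and (ii) the reduced pair $(\bar\lambda, \bar\mu)$ to still satisfy $\bar\lambda \geq \bar\mu^+$ so that the induction continues. For (i), $\lambda \geq \mu^+$ gives $\lambda_1 \geq \mu^+_1 = \max_i \mu_i \geq \mu_n$, and combining the partial-sum inequality at $k = n-1$ with $|\lambda| = |\mu^+|$ yields $\lambda_n \leq \mu^+_n = \min_i \mu_i \leq \mu_n$. For (ii), $\bar\lambda^T$ is obtained from $\lambda^T$ by decrementing its first $\mu_n$ entries and resorting, while $\bar\mu^+$ is $\mu^+$ with one copy of $\mu_n$ removed; a routine partial-sum comparison then transports $\lambda \geq \mu^+$ to $\bar\lambda \geq \bar\mu^+$.
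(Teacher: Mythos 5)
Your overall route is the paper's: both assertions are read off from the affine paving via Corollary~\ref{cd}, non-emptiness reduces to $\Col^\lambda_\mu\neq\emptyset\Leftrightarrow\lambda\geq\mu^+$, and connectedness to the uniqueness of the degree-zero tableau (degree zero forces $\gamma=\emptyset$, i.e.\ $c_i=i$, at every stage of Definition~\ref{combdef}). Your counting argument for the implication $\Col^\lambda_\mu\neq\emptyset\Rightarrow\lambda\geq\mu^+$ and your uniqueness argument are correct; note also that for connectedness you never need to \emph{construct} the degree-zero tableau: once $X^\lambda_\mu\neq\emptyset$ we already know $\dim H^0(X^\lambda_\mu\cfs)\geq 1$, so existence is automatic and only uniqueness is at issue.

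The one genuine gap is the step you dismiss as a ``routine partial-sum comparison'': that removing a box from the bottom of each of the $\mu_n$ tallest columns of $\lambda$ produces $\bar\lambda\geq\bar\mu^+$. In conjugate form this says: if $a:=\lambda^T\leq b:=(\mu^+)^T$ in dominance and $k:=\mu_n$, then decrementing the first $k$ entries of both, and re-sorting on the $a$ side, preserves dominance. This appears to be true, but it is not mechanical: because of the re-sorting, the $i$th partial sum of $\bar\lambda^T$ is a maximum over mixed selections of decremented and undecremented entries of $a$, and the inequality genuinely needs the hypothesis $b_k>b_{k+1}$ (i.e.\ that $k$ occurs as a part of $\mu^+$); the dangerous configuration is $a_{k+1}\geq b_k$, where one must play near-equalities in the dominance inequalities against $b_{k+1}\leq b_k-1$ to rule out failure. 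As written, this is an unproved lemma, not a one-line check. The easiest repair is to sidestep it entirely: $\lambda\geq\mu^+$ implies positivity of the Kostka number, so $\Std^\lambda_\mu\neq\emptyset$ (a standard fact, and the content may be taken to be the composition $\mu$ since Kostka numbers depend only on $\mu^+$), and $\Std^\lambda_\mu\subseteq\Col^\lambda_\mu$; this is in effect what the paper relies on when it cites the equivalence without proof. Alternatively, supply a genuine proof of the dominance lemma above.
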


\begin{proof}
The first statement
follows because $\Col^\lambda_\mu$ is non-empty if and only if
$\lambda \geq \mu^+$.
For the second statement, we observe that
there is a unique $\mathtt{T} \in \Col^\lambda_\mu$ of degree zero,
hence $\dim H^0(X^\lambda_\mu\cfs)=1$
by Corollary~\ref{cd}.
\end{proof}

Although not really needed in the rest of the article, we end the section by 
explaining for completeness how to recover
Spaltenstein's classification of the irreducible components
of $X^\lambda_\mu$ from Theorem~\ref{pavingt}.
Suppose we are given $(V_0,\dots,V_n) \in X^\lambda_\mu$.
Assuming $n \geq 1$, Proposition~\ref{pavingl}(iii) shows that the restriction of
$x^\lambda$ to $V_{n-1}$ is of Jordan type $(\overline{\lambda})^T$
for some $\overline{\lambda} \in \Lambda^+(n-1,d-\mu_n)$ 
such that $\lambda_{i+1} \leq \overline{\lambda}_i \leq \lambda_{i}$
for all $1 \leq i \leq n-1$.
By induction we deduce for 
$j=0,1,\dots,n$ that
the restriction of $x^\lambda$ to $V_j$ is of Jordan type
$(\lambda^{(j)})^T$ for
$\lambda^{(j)} \in \Lambda^+(j,\mu_1+\cdots+\mu_j)$
satisfying $\lambda^{(j+1)}_{i+1} \leq \lambda^{(j)}_i \leq \lambda^{(j+1)}_i$
for all $1 \leq i \leq j$.
It follows that there is a well-defined map
\begin{equation}
J:X^\lambda_\mu \rightarrow \Std^\lambda_\mu,\qquad
(V_0,\dots,V_n) \mapsto \mathtt{S},
\end{equation}
where $\mathtt{S}$ is the semi-standard tableau having entry $j$ in all 
boxes belonging to the Young diagram
of $\lambda^{(j)}$ but not of $\lambda^{(j-1)}$ for
$j=1,\dots,n$.

\begin{Theorem}\label{cpts}
For $\mathtt{S} \in \Std^\lambda_\mu$,
we have that
$J^{-1}(\mathtt{S})$ is a locally closed, smooth, irreducible
subvariety of $X^\lambda_\mu$ of dimension $d_\lambda-d_\mu$.
Moreover 
$Y_{\mathtt{S}}^\circ$ is a dense open subset of $J^{-1}(\mathtt{S})$.
\end{Theorem}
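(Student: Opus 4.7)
The plan is to proceed by induction on $n$, with the base case $n=0$ being trivial since $X^\lambda_\mu$, $Y_{\mathtt{S}}^\circ$, and $J^{-1}(\mathtt{S})$ all consist of a single point.

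For the inductive step, fix $\mathtt{S} \in \Std^\lambda_\mu$ and extract the data $\gamma, \overline{\lambda}, \overline{\mathtt{S}}, \overline{\mu}$ as in Definition~\ref{combdef}; write $J_{\overline{\lambda}}$ for the analogous map on $X^{\overline{\lambda}}_{\overline{\mu}}$. By the inductive hypothesis, $J_{\overline{\lambda}}^{-1}(\overline{\mathtt{S}})$ is a locally closed, smooth, irreducible subvariety of $X^{\overline{\lambda}}_{\overline{\mu}}$ of dimension $d_{\overline{\lambda}} - d_{\overline{\mu}}$, in which $Y_{\overline{\mathtt{S}}}^\circ$ is dense open.

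Consider the morphism $\bar\pi : X^\lambda_\mu \to Y_\beta$ of Proposition~\ref{pavingl}(i). Part~(iii) implies that the Jordan type of $x^\lambda|_{V_{n-1}}$ depends only on the Schubert cell $Y_{\gamma'}^\circ$ containing $V_{n-1}^\perp$, so $\bar\pi(J^{-1}(\mathtt{S})) \subseteq M_{\overline{\lambda}}$, where $M_{\overline{\lambda}} := \bigsqcup_{\gamma' : \overline{\lambda}(\gamma') = \overline{\lambda}} Y_{\gamma'}^\circ$ is the locally closed subvariety of $Y_\beta$ on which $x^\lambda|_{U^\perp}$ has Jordan type $(\overline{\lambda})^T$. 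The isomorphism $\bar f_{\gamma'}$ of Proposition~\ref{pavingl}(iv) identifies $\bar\pi^{-1}(Y_{\gamma'}^\circ) \cap J^{-1}(\mathtt{S})$ with $Y_{\gamma'}^\circ \times J_{\overline{\lambda}}^{-1}(\overline{\mathtt{S}})$; combining these with a standard local trivialization of the tautological bundle over $\Gr_{k,d}$ shows that $\bar\pi : J^{-1}(\mathtt{S}) \to M_{\overline{\lambda}}$ is a locally trivial fibration with fiber $J_{\overline{\lambda}}^{-1}(\overline{\mathtt{S}})$.

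The main obstacle is to verify that $M_{\overline{\lambda}}$ is smooth and irreducible of dimension $|\gamma|$, with $Y_\gamma^\circ$ as a dense open subvariety. The semi-standardness of $\mathtt{S}$ makes $\gamma$ the unique Bruhat-maximum element of $\{\gamma' : \overline{\lambda}(\gamma') = \overline{\lambda}\}$ (within each block of equal-length columns of $\lambda$ the entries $n$ of $\mathtt{S}$ occupy the rightmost available columns, which is the Bruhat-maximal choice), so $Y_\gamma^\circ$ is the unique top-dimensional open cell of $M_{\overline{\lambda}}$; irreducibility of $M_{\overline{\lambda}}$ follows since it is an open subvariety of the irreducible Schubert variety $\overline{Y_\gamma^\circ}$, and smoothness reduces to a direct local analysis identifying $M_{\overline{\lambda}}$ with a product of smooth Grassmannian-type strata indexed by the equal-length blocks in $\lambda$. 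Granted this, $J^{-1}(\mathtt{S})$ is smooth and irreducible of dimension $|\gamma| + (d_{\overline{\lambda}} - d_{\overline{\mu}})$, which equals $d_\lambda - d_\mu$ via the combinatorial identity $d_\lambda - d_{\overline{\lambda}} = |\gamma| + \binom{k}{2}$ together with $d_\mu - d_{\overline{\mu}} = \binom{k}{2}$. Finally $Y_{\mathtt{S}}^\circ = \bar f_\gamma^{-1}(Y_\gamma^\circ \times Y_{\overline{\mathtt{S}}}^\circ)$ is dense open in $J^{-1}(\mathtt{S})$, as $Y_\gamma^\circ$ is dense open in $M_{\overline{\lambda}}$ and $Y_{\overline{\mathtt{S}}}^\circ$ is dense open in $J_{\overline{\lambda}}^{-1}(\overline{\mathtt{S}})$ by induction.
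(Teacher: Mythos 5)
Your overall strategy---fibre $J^{-1}(\mathtt{S})$ over the constant-Jordan-type stratum $M_{\overline{\lambda}}\subseteq Y_\beta$ and induct---is reasonable, and your cell-by-cell description of $J^{-1}(\mathtt{S})$ is the same combinatorial decomposition the paper uses (the cells $Y^\circ_{\mathtt{T}}$ with $\mathtt{T}^+=\mathtt{S}$). But as written there are two genuine gaps, and they sit exactly where the real work lies. First, the claim that the isomorphisms $\bar f_{\gamma'}$ ``combined with a standard local trivialization of the tautological bundle'' make $\bar\pi:J^{-1}(\mathtt{S})\rightarrow M_{\overline{\lambda}}$ a locally trivial fibration is not justified: the maps $\bar f_{\gamma'}$ are built from trivializations $\theta\circ g(U)^T$ of the pair $(U^\perp,\,x^\lambda|_{U^\perp})$, i.e.\ trivializations compatible with $x^\lambda$, and a trivialization of the tautological bundle gives nothing of the sort. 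What you actually need is that the family of nilpotents $x^\lambda|_{U^\perp}$, of constant Jordan type over $M_{\overline{\lambda}}$, admits Zariski-local $x^\lambda$-equivariant trivializations across cell boundaries; without that, your decomposition only exhibits $J^{-1}(\mathtt{S})$ as a disjoint union of smooth irreducible locally closed pieces of the right dimension, which by itself gives neither smoothness nor irreducibility of the union---that is precisely the difficulty to be overcome. Second, the smoothness, irreducibility and dimension of $M_{\overline{\lambda}}$, with $Y^\circ_\gamma$ dense open, are flagged as ``the main obstacle'' and then granted: maximality of $\gamma$ does not by itself show $M_{\overline{\lambda}}$ is open in $Y_\gamma$ (one needs that the admissible $\gamma'$ form an upward-closed subset of $[\varnothing,\gamma]$ under $\subseteq$), and the ``product of Grassmannian-type strata'' identification is asserted, not proved. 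These facts are true---for instance $M_{\overline{\lambda}}$ is the locus where $\dim(U\cap\langle f_1,\dots,f_{e}\rangle)$ is prescribed for each boundary $e$ of a block of equal-length columns, hence a single orbit of the parabolic stabilizing that flag, so smooth and irreducible with $Y^\circ_\gamma$ open dense---but they require an argument, and even then the first gap remains, since that parabolic does not commute with $x^\lambda$ and so does not act on $J^{-1}(\mathtt{S})$.

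For comparison, the paper sidesteps both issues. It uses the product structure only over the single top cell, showing $M:=J^{-1}(\mathtt{S})\cap\bar\pi^{-1}(Y^\circ_\gamma)\cong Y^\circ_\gamma\times J^{-1}(\overline{\mathtt{S}})$ is open in $J^{-1}(\mathtt{S})$, smooth and irreducible of dimension $d_\lambda-d_\mu$, and then spreads these properties to all of $J^{-1}(\mathtt{S})$ by a group action: for any point it produces an explicit element of the connected centralizer $Z$ of $x^\lambda$ (a permutation of equal-length columns composed with $g(V_{n-1}^\perp)^T$) moving that point into $M$, so $Z\times M\rightarrow J^{-1}(\mathtt{S})$ is surjective and irreducibility, smoothness and the dimension follow at once. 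If you want to keep your fibration approach you must supply the local $x^\lambda$-equivariant trivializations over $M_{\overline{\lambda}}$ (or replace them by a suitable group action, which is in effect what the paper does).
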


\begin{proof}
For $\mathtt{T} \in \Col^\lambda_\mu$, define $\mathtt{T}^+ \in \Std^\lambda_\mu$
as follows.
Let $\overline{\mathtt{T}} \in \Col^{\overline{\lambda}}_{\overline{\mu}}$
be as
in Definition~\ref{combdef}. 
Then let $\mathtt{T}^+$ be obtained from the recursively
defined $(\overline{\mathtt{T}})^+ \in \Std^{\overline{\lambda}}_{\overline{\mu}}$
by adding the entry $n$ to all boxes
belonging to the Young diagram of $\lambda$ but not of $\overline{\lambda}$.
All points 
of $Y_{\mathtt{T}}^\circ$
map to $\mathtt{T}^+$ under the map $J$. Hence
for $\mathtt{S} \in \Std^\lambda_\mu$ we have that
\begin{equation}\label{step}
J^{-1}(\mathtt{S}) = \bigcup_{\mathtt{T} \in \Omega(\mathtt{S})} Y_{\mathtt{T}}^\circ
\quad\text{ where }
\quad\Omega(\mathtt{S}) := \{\mathtt{T} \in \Col^\lambda_\mu\:|\:\mathtt{T}^+ 
= \mathtt{S}\}.
\end{equation}
In particular, $J^{-1}(\mathtt{S})$ is locally closed as each $Y_{\mathtt{T}}^\circ$ is so by Theorem~\ref{pavingt}.
Note further that 
$\mathtt{S}$ belongs to $\Omega(\mathtt{S})$,
and all other $\mathtt{T} \in \Omega(\mathtt{S})$ 
are strictly smaller than $\mathtt{S}$ in the partial order $\preceq$, 
so Theorem~\ref{pavingt}
implies
$Y_{\mathtt{S}}^\circ$ is open in $J^{-1}(\mathtt{S})$.

It remains to prove that $J^{-1}(\mathtt{S})$ is smooth
and irreducible of the given dimension.
Define $\gamma$ and
$\overline{\mathtt{S}} \in \Std^{\overline{\lambda}}_{\overline{\mu}}$
as in Definition~\ref{combdef}, using $\mathtt{S}$ in place of $\mathtt{T}$. 
Let $(c_1,\dots,c_k)$ be the column sequence of 
$\gamma$. In other words, the $c_i$'s index
the columns of $\mathtt{S}$ containing the 
entry $n$.
Also define $\bar\pi$ as in Proposition~\ref{pavingl}(i).
Let
$\Omega_0(\mathtt{S}):=\{\mathtt{T} \in \Omega(\mathtt{S})\:|\:
\text{$\mathtt{T}$ 
has entry $n$ in columns $c_1,\dots,c_k$}\}$, 
so that
\begin{equation}\label{step2}
M := 
J^{-1}(\mathtt{S}) \cap \bar\pi^{-1}(Y_\gamma^\circ)
 =
\bigcup_{\mathtt{T} \in \Omega_0(\mathtt{S})} Y_{\mathtt{T}}^\circ.
\end{equation}
Observe that 
every element of $\Omega(\mathtt{S}) \setminus \Omega_0(\mathtt{S})$ 
is smaller than
every element of $\Omega_0(\mathtt{S})$ in the partial order $\preceq$.
Hence using Theorem~\ref{pavingt} again, we deduce that
$M$
is an open subset of $J^{-1}(\mathtt{S})$.
The map $\mathtt{T} \mapsto \overline{\mathtt{T}}$ is a bijection
between $\Omega_0(\mathtt{S})$ and $\Omega(\overline{\mathtt{S}})$.
So, comparing (\ref{step2}) with
(\ref{step}) for $\overline{\mathtt{S}}$,
we see that
the isomorphism $\bar f_\gamma$ from Proposition~\ref{pavingl}(iv)
restricts to an isomorphism
$M \stackrel{\sim}{\rightarrow} Y_\gamma^\circ \times
J^{-1}(\overline{\mathtt{S}})$.
By induction, we deduce that 
$M$ is a smooth irreducible variety of dimension
$|\gamma|+d_{\overline{\lambda}} - d_{\overline{\mu}}
= d_\lambda-d_\mu$.

Finally let $Z$ be the centralizer of $x^\lambda$ in $GL_d(\mathtt{C})$.
This is a connected algebraic group which acts naturally on
$X^\lambda_\mu$. Moreover $Z$ leaves the subset $J^{-1}(\mathtt{S})$
invariant, so the action restricts to a morphism
$$
m:Z \times M \rightarrow J^{-1}(\mathtt{S}).
$$
To complete the proof of the theorem, we just need to show that this
map is surjective.
Take $(V_0,\dots,V_n) \in 
J^{-1}(\mathtt{S})$.
By (\ref{step}), 
it lies in $Y_{\mathtt{T}}^\circ$ for some $\mathtt{T} \in \Omega(\mathtt{S})$.
Suppose that the entries of $\mathtt{T}$ equal to $n$ are in columns
$1 \leq b_1 < \cdots < b_k \leq s$.
There is a permutation of the columns of equal height in 
the Young diagram of $\lambda$ sending columns $b_1,\dots,b_k$ to
columns $c_1,\dots,c_k$. Let $x \in GL_d(\C)$ be the
matrix inducing the associated
permutation of the basis
$f_1,\dots,f_d$, labelling boxes as in (\ref{piccy}).
So we have that 
$x (\langle f_{b_1},\dots,f_{b_k}\rangle^\perp) = \langle f_{c_1},\dots,f_{c_k}\rangle^\perp$,
and obviously $x \in Z$.
Also, as in  the proof of Proposition~\ref{pavingl}, there is an element 
$y := g(V_{n-1}^\perp)^T \in Z$ such that $y (V_{n-1}) = 
\langle f_{b_1},\dots,f_{b_k} \rangle^\perp$.
Hence $xy(V_{n-1}) = \langle f_{c_1},\dots,f_{c_k}\rangle^\perp$,
so $xy \in Z$ maps $(V_0,\dots,V_n)$ to a point of $M$.
This implies that $m$ is surjective.
\end{proof}

\begin{Corollary}[Spaltenstein]
Assuming $\lambda \geq \mu^+$, $X^\lambda_\mu$ is equidimensional of dimension $d_\lambda-d_\mu$, with irreducible components
$Y_{\mathtt{S}} 
:= 
\overline{Y_{\mathtt{S}}^\circ}=\overline{J^{-1}(\mathtt{S})}$ 
for 
$\mathtt{S} \in \Std^\lambda_\mu$
(closure
in the Zariski topology).
\end{Corollary}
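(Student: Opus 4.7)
The plan is to deduce the corollary almost directly from Theorem~\ref{cpts}, which already gives us the main geometric content: each fibre $J^{-1}(\mathtt{S})$ is locally closed, smooth, and irreducible of dimension $d_\lambda - d_\mu$, with $Y_{\mathtt{S}}^\circ$ open and dense in it. Taking the Zariski closure preserves irreducibility and dimension, so $Y_{\mathtt{S}} := \overline{Y_{\mathtt{S}}^\circ} = \overline{J^{-1}(\mathtt{S})}$ is an irreducible closed subvariety of $X^\lambda_\mu$ of dimension $d_\lambda - d_\mu$ for every $\mathtt{S} \in \Std^\lambda_\mu$.

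Next, since the map $J$ of the preceding discussion is defined on all of $X^\lambda_\mu$ and takes values in $\Std^\lambda_\mu$, we have the set-theoretic disjoint union $X^\lambda_\mu = \bigsqcup_{\mathtt{S} \in \Std^\lambda_\mu} J^{-1}(\mathtt{S})$, so that $X^\lambda_\mu = \bigcup_{\mathtt{S} \in \Std^\lambda_\mu} Y_{\mathtt{S}}$. Since $\Std^\lambda_\mu$ is non-empty precisely when $\lambda \geq \mu^+$, this also gives a second proof that the hypothesis in the corollary is exactly what is needed for $X^\lambda_\mu$ to be non-empty.

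It remains to show the $Y_{\mathtt{S}}$ are precisely the irreducible components; equidimensionality then follows immediately because each $Y_{\mathtt{S}}$ has dimension $d_\lambda-d_\mu$. For this, I would argue that the $Y_{\mathtt{S}}$ are pairwise incomparable under inclusion. Suppose $Y_{\mathtt{S}} \subseteq Y_{\mathtt{S}'}$ for some $\mathtt{S}, \mathtt{S}' \in \Std^\lambda_\mu$. Since both are irreducible closed of the same dimension $d_\lambda-d_\mu$, the inclusion forces $Y_{\mathtt{S}} = Y_{\mathtt{S}'}$. But $J^{-1}(\mathtt{S})$ and $J^{-1}(\mathtt{S}')$ are both open dense in this common irreducible closed set (each being open in its own closure by Theorem~\ref{cpts}), so they must intersect; this contradicts the disjointness of the fibres of $J$ unless $\mathtt{S} = \mathtt{S}'$. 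A finite union of irreducible closed subsets, no one of which contains another, is exactly the decomposition into irreducible components, so this completes the proof.

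There is no real obstacle here: the whole corollary is essentially a formal repackaging of Theorem~\ref{cpts} together with the observation that the fibres of $J$ partition $X^\lambda_\mu$. The only slightly subtle point is verifying distinctness, which is handled by combining the equidimensionality of the closures with the disjointness of the locally closed pieces.
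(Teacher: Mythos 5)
Your proposal is correct and follows essentially the same route as the paper: both deduce the result from Theorem~\ref{cpts} together with the fact that the fibres of $J$ partition $X^\lambda_\mu$, so their closures are the irreducible components. The only difference is that you spell out the incomparability of the closures (via equidimensionality, openness of each locally closed fibre in its closure, and disjointness), a step the paper leaves implicit.
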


\begin{proof}
By Theorem~\ref{cpts}, the subvarieties
$J^{-1}(\mathtt{S})$ for  $\mathtt{S} \in \Std^\lambda_\mu$ are
irreducible of dimension $d_\lambda-d_\mu$,
and they partition $X^\lambda_\mu$ into disjoint subsets. Hence their closures give all the irreducible
components.
Finally, $Y_{\mathtt{S}}^\circ$ has the same closure as $J^{-1}(\mathtt{S})$ 
 as it is a dense subset.
\end{proof}

\begin{Corollary}
For $\mathtt{T} \in \Col^\lambda_\mu$,
we have that $\deg(\mathtt{T}) \leq d_\lambda-d_\mu$,
with equality if and only if $\mathtt{T}$ is semi-standard.
\end{Corollary}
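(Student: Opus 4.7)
The plan is to deduce the corollary by comparing the affine paving from Theorem~\ref{pavingt} with the partition of $X^\lambda_\mu$ into the irreducible subvarieties $J^{-1}(\mathtt{S})$ supplied by Theorem~\ref{cpts}. The key identity driving everything is (\ref{step}): for each $\mathtt{S} \in \Std^\lambda_\mu$, the preimage $J^{-1}(\mathtt{S})$ is the disjoint union of the cells $Y^\circ_{\mathtt{T}}$ with $\mathtt{T} \in \Omega(\mathtt{S})$, i.e.\ with $\mathtt{T}^+ = \mathtt{S}$.

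First I would prove the inequality $\deg(\mathtt{T}) \leq d_\lambda - d_\mu$. By Theorem~\ref{pavingt}, the cell $Y^\circ_{\mathtt{T}}$ is isomorphic to $\mathbb{A}^{\deg(\mathtt{T})}$; and it is contained in $J^{-1}(\mathtt{T}^+)$ by (\ref{step}). Theorem~\ref{cpts} states that $J^{-1}(\mathtt{T}^+)$ is (locally closed, irreducible, smooth, and) of dimension $d_\lambda - d_\mu$, so $\deg(\mathtt{T}) = \dim Y^\circ_{\mathtt{T}} \leq \dim J^{-1}(\mathtt{T}^+) = d_\lambda - d_\mu$.

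Next I would handle the equality clause via the combinatorial equivalence $\mathtt{T} \in \Std^\lambda_\mu \Longleftrightarrow \mathtt{T}^+ = \mathtt{T}$. The implication $\Leftarrow$ is immediate because $\mathtt{T}^+$ is by construction semi-standard (it is assembled recursively by adjoining boxes of value $n$ at the bottom of the columns of $\overline\lambda$). For $\Rightarrow$, I would argue by induction on $n$: if $\mathtt{T}$ is semi-standard then the columns of $\lambda$ containing an entry $n$ form a rightmost block among columns of equal height (since $n$ is the largest value), so removing the $n$'s already yields a partition shape and no column reordering is needed in Definition~\ref{combdef}; then $\overline{\mathtt{T}}$ is again semi-standard, and by induction $(\overline{\mathtt{T}})^+ = \overline{\mathtt{T}}$, whence $\mathtt{T}^+ = \mathtt{T}$.

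Granted this equivalence, the two directions of the equality statement follow. If $\mathtt{T}$ is semi-standard then $\mathtt{T}^+ = \mathtt{T} \in \Std^\lambda_\mu$, and Theorem~\ref{cpts} says $Y^\circ_{\mathtt{T}}$ is dense in the irreducible variety $J^{-1}(\mathtt{T})$, forcing $\deg(\mathtt{T}) = d_\lambda - d_\mu$. Conversely, if $\mathtt{T}$ is not semi-standard then $\mathtt{T} \neq \mathtt{T}^+$, so the disjointness of the paving (Theorem~\ref{pavingt}) gives $Y^\circ_{\mathtt{T}} \subseteq J^{-1}(\mathtt{T}^+) \setminus Y^\circ_{\mathtt{T}^+}$, a proper closed subvariety of the irreducible variety $J^{-1}(\mathtt{T}^+)$, hence of dimension strictly less than $d_\lambda - d_\mu$. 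I expect the main (small) obstacle to be the combinatorial equivalence $\mathtt{T}$ semi-standard $\iff \mathtt{T}^+ = \mathtt{T}$; everything else is a direct bookkeeping of dimensions from the two already-proved theorems.
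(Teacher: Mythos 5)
Your proposal is correct and follows essentially the same route as the paper: bound $\deg(\mathtt{T}) = \dim Y^\circ_{\mathtt{T}}$ by $\dim J^{-1}(\mathtt{T}^+) = d_\lambda - d_\mu$ using (\ref{step}) and Theorem~\ref{cpts}, and characterize equality via density of $Y^\circ_{\mathtt{T}}$ in the irreducible variety $J^{-1}(\mathtt{T}^+)$. The only difference is that you spell out the combinatorial equivalence ``$\mathtt{T}$ semi-standard $\iff \mathtt{T}^+ = \mathtt{T}$,'' which the paper asserts without proof (it is implicit in the claim $\mathtt{S} \in \Omega(\mathtt{S})$ in the proof of Theorem~\ref{cpts}), and your verification of it is sound.
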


\begin{proof}
Let $\mathtt{S} := \mathtt{T}^+$, notation as in the proof of Theorem~\ref{cpts}.
By Theorem~\ref{cpts} and the decomposition (\ref{step}),
$Y_{\mathtt{T}}^\circ$ 
is an irreducible subset of the irreducible variety
$J^{-1}(\mathtt{S})$, and it is dense 
in $J^{-1}(\mathtt{S})$
if and only if $\mathtt{T}$
is semi-standard (equivalently, $\mathtt{T} = \mathtt{S}$).
The corollary follows from this 
since $\dim J^{-1}(\mathtt{S}) = d_\lambda-d_\mu$
and $\dim Y_{\mathtt{T}}^\circ = \deg(\mathtt{T})$.
(It is not hard to supply a purely combinatorial proof of this corollary.)
\end{proof}

\section{Algebraic basis}\label{sab}

Continue with fixed $\la\in \La^+(n,d)$ and $\mu \in \La(n,d)$.
Recall the elements $h_r(\mu;i_1,\dots,i_m) \in P_\mu$ 
and the algebra $C^\lambda_\mu := P_\mu / I^\lambda_\mu$ 
from the introduction. 

\begin{Lemma}\label{nz}
The algebra $C^\lambda_\mu$ is non-zero if and only if
$\lambda \geq \mu^+$.
\end{Lemma}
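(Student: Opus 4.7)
The plan is to analyze directly when the homogeneous ideal $I^\lambda_\mu$ fails to be proper. Since $I^\lambda_\mu$ is generated by the elements $h_r(\mu;i_1,\dots,i_m)$ listed in (\ref{rel1}) inside the non-negatively graded algebra $P_\mu$ with $(P_\mu)_0 = \C$, any homogeneous ideal is either contained in the augmentation ideal $(P_\mu)_{>0}$ or equals all of $P_\mu$. Hence $I^\lambda_\mu = P_\mu$ (equivalently, $C^\lambda_\mu = 0$) iff at least one generator on the list lies in degree $0$, i.e. equals the constant $1$.

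With that reduction in hand, I would read off when a degree-zero generator appears. By the convention $h_0(\mu;i_1,\dots,i_m)=1$, the value $r=0$ satisfies the inequality in (\ref{rel1}) for a given tuple $(i_1,\dots,i_m)$ precisely when
\[
0 > \lambda_1+\cdots+\lambda_m - \mu_{i_1}-\cdots-\mu_{i_m}.
\]
Consequently $C^\lambda_\mu = 0$ iff there exist $m\geq 1$ and $1\leq i_1<\cdots<i_m\leq n$ with $\mu_{i_1}+\cdots+\mu_{i_m} > \lambda_1+\cdots+\lambda_m$, while $C^\lambda_\mu \neq 0$ iff $\lambda_1+\cdots+\lambda_m \geq \mu_{i_1}+\cdots+\mu_{i_m}$ for every such choice.

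To finish, I would translate the latter condition into the dominance order. Since $\mu^+_1+\cdots+\mu^+_m$ is by definition the maximum of $\mu_{i_1}+\cdots+\mu_{i_m}$ taken over all size-$m$ subsets $\{i_1<\cdots<i_m\}$ of $\{1,\dots,n\}$, the inequality $\lambda_1+\cdots+\lambda_m \geq \mu_{i_1}+\cdots+\mu_{i_m}$ holding for all tuples and all $m$ is equivalent to $\sum_{i=1}^m \lambda_i \geq \sum_{i=1}^m \mu^+_i$ for all $m$, i.e.\ to $\lambda \geq \mu^+$. Chaining the equivalences yields the lemma.

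I do not foresee a serious obstacle. The only slightly delicate point is justifying that $1\in I^\lambda_\mu$ cannot be produced by positive-degree generators alone without an explicit degree-zero generator on the list, and this is immediate from the grading on $P_\mu$. Note that no geometric input (in particular, neither Theorem~\ref{mt1} nor Corollary~\ref{ne}) is needed; the argument is purely combinatorial/algebraic, as is appropriate since the lemma will feed into the proof of Theorem~\ref{mt1} rather than the other way around.
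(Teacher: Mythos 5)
Your proof is correct and follows essentially the same route as the paper: the paper also observes that, by the grading, $C^\lambda_\mu$ vanishes precisely when some generator in (\ref{rel1}) has degree zero (i.e.\ $r=0$ is allowed, forcing $1\in I^\lambda_\mu$), and then identifies the resulting inequalities $\lambda_1+\cdots+\lambda_m\geq\mu_{i_1}+\cdots+\mu_{i_m}$ with the dominance condition $\lambda\geq\mu^+$. Your extra remarks about the augmentation ideal and the maximality of $\mu^+_1+\cdots+\mu^+_m$ just make explicit what the paper leaves implicit.
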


\begin{proof}
Since everything is graded, $C^\la_\mu$ is non-zero
if and only if all the generators from (\ref{rel1}) are of positive
degree, i.e. $\lambda_1+\cdots+\lambda_m \geq \mu_{i_1}+\cdots+\mu_{i_m}$
for all $m \geq 1$ and $1 \leq i_1<\cdots<i_m\leq n$.
By the definition of the dominance ordering on partitions, this
is the statement that $\lambda \geq \mu^+$.
\end{proof}

For $\mathtt{T} \in \Col^\lambda_\mu$, we inductively define
an element $h(\mathtt{T}) \in P_\mu$ as follows.
If $n = d = 0$ then $\mathtt{T}$ is the empty tableau
and we simply set $h(\mathtt{T}) := 1$. If $n \geq 1$, we
let 
$\gamma \in \Par_{k}$ and 
$\overline{\mathtt{T}} \in 
\Col^{\overline{\lambda}}_{\overline{\mu}}$ be as in 
Definition~\ref{combdef}.
The natural embedding
$\C[x_1,\dots,x_{d-k}] \hookrightarrow
\C[x_1,\dots,x_d]$ induces an embedding
$P_{\overline{\mu}} \hookrightarrow P_\mu$.
This allows us to view the
recursively defined element $h(\overline{\mathtt{T}}) \in 
P_{\overline{\mu}}$ as an element of $P_\mu$. Then we set
\begin{equation}
h(\mathtt{T}) := h(\overline{\mathtt{T}}) h_\gamma(\mu;n)
\qquad\text{ where }\qquad
h_\gamma(\mu;n) := \prod_{i=1}^k h_{\gamma_i}(\mu;n).
\end{equation}
Recalling (\ref{degdef}), $h(\mathtt{T})$ is homogeneous of degree
$2\deg(\mathtt{T})$.
For example, if $\mathtt{T}$ is as in Example~\ref{anex}
then $h(\mathtt{T}) = h_1(\mu;3) h_1(\mu;6) = x_6(x_{11}+x_{12})$.
We use the same notation for the canonical image of 
$h(\mathtt{T})$ in the quotient $C^\lambda_\mu$.

\begin{Theorem}\label{bt}
The elements $\{h(\mathtt{T})\:|\:\mathtt{T}\in\Col^\lambda_\mu\}$
give a basis for $C^\lambda_\mu$.
\end{Theorem}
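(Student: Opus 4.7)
The plan is to proceed by induction on $n$, with the trivial base case $n=0$ (where $d=0$ and $h(\emptyset)=1$ is a basis of $C^\lambda_\mu = \C$). For the inductive step, fix $n \geq 1$; by Lemma~\ref{nz} we may assume $\lambda \geq \mu^+$, and after trivially reducing the case $\mu_n = 0$ we have $k := \mu_n \geq 1$ and $s := \lambda_1 \geq k \geq t := \lambda_n$. The target is the graded vector-space decomposition
\[
C^\lambda_\mu \;=\; \bigoplus_{\gamma \in \Par_{k-t,s-k}} C^{\overline\lambda(\gamma)}_{\overline\mu} \cdot h_\gamma(\mu;n),
\]
where $\gamma$ ranges over the column-sequence partitions from Definition~\ref{combdef} and $\overline\lambda(\gamma)$ is the shape obtained from $\lambda$ by removing one box from the bottom of each of columns $c_1,\dots,c_k$ of $\gamma$. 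Given the decomposition, applying the inductive hypothesis to each summand converts it into the claimed basis $\{h(\mathtt{T}) = h(\overline{\mathtt{T}}) \cdot h_\gamma(\mu;n)\}$.

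For the spanning half, I would use the tensor factorization $P_\mu \cong P_{\overline\mu} \otimes \C[h_1(\mu;n),\dots,h_k(\mu;n)]$ to write any $f \in P_\mu$ as $\sum_\alpha f_\alpha \cdot h_\alpha(\mu;n)$ with $f_\alpha \in P_{\overline\mu}$. The single-index relations $h_r(\mu;n) \in I^\lambda_\mu$ for $r > s-k$ (the $m=1$, $i_1=n$ case of (\ref{rel1})) immediately kill summands where $\alpha$ has a part exceeding $s-k$, reducing to $\alpha_i \leq s-k$. To further cut the number of nonzero parts down to $k-t$, I would expand the mixed relations $h_r(\mu;i_1,\dots,i_{m-1},n) \in I^\lambda_\mu$ using the convolution identity (\ref{easyexp}); this rewrites products $h_{a_1}(\mu;n) \cdots h_{a_m}(\mu;n)$ involving too many factors as combinations of shorter products times elements of $P_{\overline\mu}$. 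Finally, for each fixed $\gamma \in \Par_{k-t,s-k}$, the coefficient $f_\gamma$ must be reducible modulo $I^{\overline\lambda(\gamma)}_{\overline\mu}$; this rests on the key containment
\[
h_\gamma(\mu;n) \cdot I^{\overline\lambda(\gamma)}_{\overline\mu} \;\subseteq\; I^\lambda_\mu,
\]
again verified via the mixed relations. Once granted, the inductive hypothesis expresses each $f_\gamma$ modulo $I^{\overline\lambda(\gamma)}_{\overline\mu}$ as a combination of $h(\overline{\mathtt{T}})$'s, producing the $h(\mathtt{T})$ spanning.

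Linear independence would follow by a dimension count. Spanning gives $\dim C^\lambda_\mu \leq |\Col^\lambda_\mu|$. For the reverse inequality, I would construct a surjection $C^\lambda_\mu \twoheadrightarrow H^*(X^\lambda_\mu\cfs)$ and appeal to $\dim H^*(X^\lambda_\mu\cfs) = |\Col^\lambda_\mu|$ from Corollary~\ref{cd}. The surjection exists provided each generator of $I^\lambda_\mu$ vanishes under the composite $P_\mu \to C_\mu \xrightarrow{\psi} H^*(X_\mu\cfs) \xrightarrow{j^*} H^*(X^\lambda_\mu\cfs)$; this is the Tanisaki-style vanishing, arising from the fact that on $X^\lambda_\mu$ the stability conditions $x^\lambda \widetilde{V}_i \subseteq \widetilde{V}_{i-1}$ impose dimension bounds on $\widetilde{V}_{i_1} + \cdots + \widetilde{V}_{i_m}$ and its $x^\lambda$-quotients that force the Chern-class polynomials recorded in (\ref{rel2}) to vanish once $r$ exceeds the stated threshold.

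I expect the spanning step — specifically the containment $h_\gamma(\mu;n) \cdot I^{\overline\lambda(\gamma)}_{\overline\mu} \subseteq I^\lambda_\mu$ — to be the main obstacle. It requires a careful identification of how the box-removal procedure defining $\overline\lambda(\gamma)$ interacts with the generating relations for $\lambda$: one must pinpoint exactly which relations in $I^\lambda_\mu$ (indexed by subsets $\{i_1,\dots,i_m\} \subseteq \{1,\dots,n\}$ containing $n$) account for each relation of $I^{\overline\lambda(\gamma)}_{\overline\mu}$ after multiplication by $h_\gamma(\mu;n)$, and the bookkeeping is sensitive to the combinatorial choice of the columns $c_1,\dots,c_k$.
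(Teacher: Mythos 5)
Your overall architecture (induction on $n$, splitting off the last row of variables, plus independence by a dimension count against Corollary~\ref{cd}) is the same as the paper's, but the two load-bearing steps are not right as stated. First, the key containment $h_\gamma(\mu;n)\cdot I^{\overline\lambda(\gamma)}_{\overline\mu}\subseteq I^\lambda_\mu$, and with it your direct-sum decomposition $C^\lambda_\mu=\bigoplus_\gamma C^{\overline\lambda(\gamma)}_{\overline\mu}\cdot h_\gamma(\mu;n)$, is false. Take $n=2$, $\lambda=(3,1)$, $\mu=(2,2)$ and $\gamma=\varnothing$: then $k=2$, the columns are $c_1=1,c_2=2$, so $\overline\lambda(\gamma)=(2)$, $\overline\mu=(2)$, and $h_\gamma(\mu;2)=1$. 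The element $h_1(\overline\mu;1)=x_1+x_2$ lies in $I^{\overline\lambda(\gamma)}_{\overline\mu}$ (threshold $r>\overline\lambda_1-\overline\mu_1=0$), but the degree-one component of $I^{(3,1)}_{(2,2)}$ is spanned by $h_1(\mu;1,2)=x_1+x_2+x_3+x_4$ alone (the $m=1$ generators start in degree $2$), so $x_1+x_2\notin I^\lambda_\mu$. What is true, and what the paper proves, is the containment only modulo the ideal generated by the $h_\kappa(\mu;n)$ for $\kappa$ strictly larger than $\gamma$ (larger size, or equal size and dominance-larger): this forces a filtration argument, with ideals $J_{\unrhd\gamma}\supseteq J_{\rhd\gamma}$ and cyclic $C^{\overline\lambda}_{\overline\mu}$-module subquotients, rather than a direct sum. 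The genuinely hard part is exactly the estimate you defer (Lemma~\ref{hard}): showing $h_r(\mu;i_1,\dots,i_m)h_\gamma(\mu;n)\in J_{\rhd\gamma}$ under (\ref{eq1})--(\ref{eq2}) requires the quasi-permutation/marked-partition cancellation of Claims 1 and 2 there, and "expanding the mixed relations via (\ref{easyexp})" alone does not produce it; your sketch has no substitute for this.

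Second, the independence half has a gap at the vanishing (\ref{s1}). Your geometric justification cannot work as stated: since the $\widetilde V_i$ are nested, $\widetilde V_{i_1}+\cdots+\widetilde V_{i_m}=\widetilde V_{i_m}$, and $e_r(\mu;i_1,\dots,i_m)$ corresponds to a Chern class of the direct sum of subquotients $\bigoplus_j\widetilde V_{i_j}/\widetilde V_{i_j-1}$, which for a non-initial index set is not pulled back from any Grassmannian, so no direct rank/dimension bound gives the relations (\ref{rel2}). The paper proves the vanishing only for initial segments $\{1,\dots,m\}$, by exhibiting the trivial subbundle $\langle f_1,\dots,f_h\rangle$ of $\widetilde V_m$ over $X^\lambda_\mu$ and Schubert calculus (Lemmas~\ref{tiger} and \ref{sq2}), and reduces arbitrary subsets to that case via Lemma~\ref{tim}, whose commuting cube rests on the canonical anti-invariant isomorphism of Theorem~\ref{mt2} (the Fourier transform of the Borho--Macpherson isomorphism). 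Unless you invoke Theorem~\ref{mt2} (or supply some other argument for permuting the parts of $\mu$ compatibly with $j^*\circ\psi$), the surjection $C^\lambda_\mu\twoheadrightarrow H^*(X^\lambda_\mu\cfs)$, and hence the linear independence, is not established.
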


In remainder of the section, we will prove the spanning part of 
Theorem~\ref{bt},
postponing the proof of 
linear independence to $\S$\ref{stan}.
The approach is similar to \cite[Lemmas 2--3]{T} where the case
of regular $\mu$ was treated, but the general case
turns out to be considerably more delicate.

The argument goes by induction on $n$.
The theorem is trivial in the case $n=0$,
so assume for the rest of the section that $n\geq 1$
and that we have proved the spanning part of Theorem~\ref{bt} for all
smaller $n$.
 Let $k := \mu_n$, $s := \lambda_1$ and $t := \lambda_n$.
In view of Lemma~\ref{nz}, we may as well assume 
that $\lambda \geq \mu^+$, hence we have that
$s \geq k \geq t$.
Set $\beta := ((s-k)^{k-t}) \in \Par_k$, and let
$\overline{\mu} \in \Lambda(n-1,d-k)$ be obtained from $\mu$
by forgetting the last part (cf. Proposition~\ref{pavingl}).
Let $\unrhd$ be the partial order on the set $\Par_k$ of partitions of height at most $k$ such
such that $\gamma \unrhd \kappa$ if either $|\gamma| > |\kappa|$, or
$|\gamma| = |\kappa|$ and
$\gamma \geq \kappa$ in the dominance ordering.
For $\gamma \in \Par_k$,
let $J_{\unrhd \gamma}$ (resp.\ $J_{\rhd \gamma}$) denote the
ideal of $P_\mu$ generated by $I^\lambda_\mu$
and
all $h_\kappa(\mu;n)$ for $\kappa \unrhd \gamma$ (resp.\ $\kappa \rhd \gamma$).

\begin{Lemma}\label{hard}
Fix $m \geq 0$, $1 \leq i_1 < \cdots < i_m \leq n-1$,
$r \geq 0$ and $\gamma \in \Par_k$.
Suppose we are given $1 \leq c \leq k$ such that
\begin{align}\label{eq1}
r+c &> \lambda_1+\cdots+\lambda_m-\mu_{i_1}-\cdots-\mu_{i_m},\\
r+\gamma_c &> \lambda_1+\cdots+\lambda_m+\lambda_{m+1}-\mu_{i_1}-\cdots-\mu_{i_m} - \mu_n.\label{eq2}
\end{align}
Then
$h_r(\mu;i_1,\dots,i_m) h_\gamma(\mu;n) \in J_{\rhd \gamma}$.
\end{Lemma}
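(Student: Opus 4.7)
The plan is to derive the lemma from the single membership $h_{r+\gamma_c}(\mu;i_1,\ldots,i_m,n) \in I^\lambda_\mu$, which holds by (\ref{rel1}) applied to the enlarged index set $i_1<\cdots<i_m<n$ precisely because of hypothesis (\ref{eq2}). Expanding via (\ref{easyexp}) to split the variables $X_{i_1}\cup\cdots\cup X_{i_m}$ from $X_n$, and then multiplying the resulting identity by the missing factor $\prod_{i\neq c} h_{\gamma_i}(\mu;n)$, produces the key relation
$$\sum_{a+b = r+\gamma_c} h_a(\mu;i_1,\ldots,i_m)\, h_{\kappa(a)}(\mu;n) \;\in\; I^\lambda_\mu,$$
where $\kappa(a)\in\Par_k$ is the partition whose multiset of parts is $(\gamma_1,\ldots,\widehat{\gamma_c},\ldots,\gamma_k,\, r+\gamma_c-a)$. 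The term $a=r$ reproduces exactly the target $h_r(\mu;i_1,\ldots,i_m)\, h_\gamma(\mu;n)$.

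From here I would dispose of the remaining terms by cases. For $a<r$ the inserted part $r+\gamma_c-a$ exceeds $\gamma_c$, so $|\kappa(a)|>|\gamma|$ and hence $\kappa(a)\rhd\gamma$, making $h_{\kappa(a)}(\mu;n)$ itself a generator of $J_{\rhd\gamma}$. For $a>r$ with $a>M:=\lambda_1+\cdots+\lambda_m-\mu_{i_1}-\cdots-\mu_{i_m}$, the factor $h_a(\mu;i_1,\ldots,i_m)$ already lies in $I^\lambda_\mu$ by (\ref{rel1}). The only delicate range is $r<a\le M$; by (\ref{eq1}) this range has fewer than $c$ elements, and for each such $a$ one checks directly that $\kappa(a)\subsetneq\gamma$ componentwise, and that the triple $(a,\kappa(a),c')$---with $c'\ge c$ chosen as the position of $r+\gamma_c-a$ inside $\kappa(a)$---again satisfies (\ref{eq1}) and (\ref{eq2}), since $a+c'\ge r+c>M$ and $a+\kappa(a)_{c'}=r+\gamma_c>L$.

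The natural finish is induction on $|\gamma|$, whose base case $|\gamma|=0$ reduces after the same manipulation to $h_r(\mu;i_1,\ldots,i_m,n)\in I^\lambda_\mu$ together with the trivial observation that every $h_b(\mu;n)$ with $b\ge 1$ is a generator of $J_{\rhd\emptyset}$. The main obstacle is that the naive inductive hypothesis only yields $h_a(\mu;i_1,\ldots,i_m)\,h_{\kappa(a)}(\mu;n)\in J_{\rhd\kappa(a)}$, whereas we need membership in the strictly smaller $J_{\rhd\gamma}$. I expect that closing this gap requires a finer double induction on the pair $(|\gamma|,\,M-r)$---the secondary parameter bounds the length of the problematic range and trivialises when $c=1$---together with careful tracking of which intermediate partitions $\nu$ with $\kappa(a)\lhd\nu\unlhd\gamma$ can appear when the inductive decompositions are unwound. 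Because re-applying the same key identity to any such intermediate term merely re-indexes the family $\{\kappa(A)\}$, the recursion should terminate and leave everything expressible modulo $J_{\rhd\gamma}$.
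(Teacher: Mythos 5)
Your opening move is sound and is in fact the same identity that drives the paper's argument: hypothesis (\ref{eq2}) puts $h_{r+\gamma_c}(\mu;i_1,\dots,i_m,n)$ into $I^\lambda_\mu$ by (\ref{rel1}), and expanding via (\ref{easyexp}) and multiplying by $\prod_{i\neq c}h_{\gamma_i}(\mu;n)$ gives a relation whose $a=r$ term is the target. Your disposal of the terms with $a<r$ (partition of larger size, hence $\rhd\gamma$) and $a>\lambda_1+\cdots+\lambda_m-\mu_{i_1}-\cdots-\mu_{i_m}$ (factor already in $I^\lambda_\mu$) is correct, as is the base case and the observation that (\ref{eq1}) makes the delicate range shorter than $c$; in particular your argument is complete when $c=1$.

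The gap is exactly where you flag it, and it is the heart of the lemma, not a finishing detail. For $r<a\le M$ the induction hypothesis only yields $h_a(\mu;i_1,\dots,i_m)h_{\kappa(a)}(\mu;n)\in J_{\rhd\kappa(a)}$, and since $|\kappa(a)|<|\gamma|$ this ideal strictly contains $J_{\rhd\gamma}$, so nothing is concluded; your proposed repair does not work as described. The suggested recursion is vacuous: if you re-apply the same key identity to the cross term $h_a\,h_{\kappa(a)}$ at the position $c'$ of its new part, then because $\kappa(a)$ with that part deleted equals $\gamma$ with $\gamma_c$ deleted and $a+\kappa(a)_{c'}=r+\gamma_c$, you recover \emph{literally the same relation} you started from, so no amount of unwinding produces new information, and a double induction on $(|\gamma|,M-r)$ cannot manufacture membership in the smaller ideal $J_{\rhd\gamma}$ out of a statement that only ever concludes membership in $J_{\rhd(\text{own }\gamma)}$. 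What is actually needed — and what the paper does — is to generate genuinely different relations by expanding at \emph{other} positions $j_b\le c$ with \emph{shifted} complementary products $\prod_{a\neq j_b}h_{\gamma_a-a+f(a)}(\mu;n)$, organize the resulting family by quasi-permutations and marked cycle types (Claim 2), and then cancel all the unwanted cross terms simultaneously in an alternating sum over marked partitions (Claim 1), rather than killing each cross term individually (it is not even clear each such term lies in $J_{\rhd\gamma}$). So the proposal correctly identifies the first step and the easy cases, but the essential combinatorial cancellation mechanism is missing.
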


\begin{proof}
We first formulate and prove two technical claims.
By a {\em marked partition} we mean a pair $(\pi;p)$ consisting
of a partition $\pi$ and a non-zero part $p$ of $\pi$.
We write $\pi \cup \{q\}$ for the partition obtained by adding
one extra part equal to $q$ to the partition $\pi$.

\vspace{2mm}
\noindent{\bf Claim 1.}
{\em Suppose we are given vectors $v(\pi;p)$ 
for each marked partition $(\pi;p)$ with $1 \leq |\pi| \leq c$.
For any partition $\pi$ with $|\pi| \leq c$, 
let $v(\pi)$ denote $\sum_p v(\pi;p)$ 
summing over the set of all non-zero parts $p$ of $\pi$.
Assume for $1 \leq b \leq c$ that
$v(\pi) +
\sum_{q=1}^b v(\pi\cup\{q\};q) = 0$
for each partition $\pi$ of $(c-b)$.
Then we have that
$\sum_{|\pi|=c}
(-1)^{h(\pi)} v(\pi) = 0.$}

\vspace{1mm}
\noindent
To see this, we note for $i=1,\dots,c$ that
\begin{align*}
\sum_{\substack{ |\pi|  < c \\ h(\pi) = i-1}} (-1)^{h(\pi)}
v(\pi)&=
\sum_{\substack{|\pi| < c \\ h(\pi) = i-1}}
\sum_{q=1}^{c-|\pi|} (-1)^{h(\pi)+1} v(\pi\cup\{q\};q)\\
&=
\sum_{\substack{|\pi| < c \\ h(\pi) = i}} (-1)^{h(\pi)}
v(\pi) +
\sum_{\substack{|\pi| = c \\ h(\pi) = i}} (-1)^{h(\pi)}
v(\pi).
\end{align*}
Hence,
\begin{align*}
v(\varnothing) 
&= 
\sum_{\substack{|\pi| < c \\ h(\pi) = 0}}
(-1)^{h(\pi)} v(\pi) + 
\sum_{\substack{|\pi| = c \\ h(\pi) \leq 0}}
(-1)^{h(\pi)} v(\pi)\\
&= 
\sum_{\substack{|\pi| < c \\ h(\pi) = 1}}
(-1)^{h(\pi)} v(\pi)+\sum_{\substack{|\pi| = c \\ h(\pi) \leq 1}}
(-1)^{h(\pi)} v(\pi)\\
&\qquad\qquad\qquad\qquad\quad\:\qquad\vdots\\
&=
\sum_{\substack{|\pi| < c \\ h(\pi) = c}}
(-1)^{h(\pi)} v(\pi)+ 
\sum_{\substack{|\pi| = c \\ h(\pi) \leq c}}
(-1)^{h(\pi)} v(\pi)
=
\sum_{|\pi| = c}
(-1)^{h(\pi)} v(\pi).
\end{align*}
Since $v(\varnothing) = 0$ this establishes Claim 1.

\vspace{2mm}

Call a function $f:\{1,\dots,c\} \rightarrow \{1,\dots,c\}$
a {\em quasi-permutation} of {\em descent} $b \geq 0$
if there exist distinct integers $1 \leq j_1,\dots,j_{b+1} \leq c$
and a bijection $\bar f:\{1,\dots,c\}\setminus\{j_1,\dots,j_b\}
\rightarrow \{1,\dots,c\}\setminus\{j_1,\dots,j_b\}$ such that
\begin{itemize}
\item $j_1 = c$;
\item $f(j_1) = j_2, f(j_2)=j_3,\dots,f(j_b) = j_{b+1}$;
\item $f(j) = \bar f(j)$ for each $j \in \{1,\dots,c\}\setminus\{j_1,\dots,j_b\}$.
\end{itemize}
Note $b$, $j_1,\dots,j_{b+1}$ and $\bar f$ are uniquely determined by
the quasi-permutation $f$, and
quasi-permutations of descent $0$ are ordinary
permutations belonging to the symmetric group $S_c$.
The {\em marked cycle type} of
the quasi-permutation $f$ is the marked partition
$(\pi;p)$
defined by letting
$\pi$ be the partition of $(c-b)$ 
recording the usual cycle type of the permutation
$\bar f$ of $\{1,\dots,c\} \setminus \{j_1,\dots,j_b\}$
and $p$ be the length of the cycle that involves $j_{b+1}$ when
$\bar f$ is written as a product of disjoint cycles.
For example, the quasi-permutation  
$$
\left(
\begin{array}{lllllllllllllll}
1&2&3&4&5&6&7&8&9&10&11&12&13&14&15\\
2&1&5&6&7&10&8&9&7&4&12&13&14&11&3
\end{array}
\right)
$$
of $c=15$ has marked cycle type $(4\,3^2\,2;3)$.
Let $S_c(\pi;p)$ denote the set of all quasi-permutations
of marked cycle type $(\pi;p)$.
In particular, $S_c((1^c);1) = \{\operatorname{id}\}$.
Given $f \in S_c(\pi;p)$, let
\begin{align*}
\omega(f) &:= (1-f(1),2-f(2),\dots,c-f(c)) \in \Z^c,\\
|\omega(f)| &:= (1-f(1))+\cdots+(c-f(c)) \in \Z_{\geq 0},\\
h_{\gamma-\omega(f)}(\mu;n) &:= 
\prod_{a=1}^c
h_{\gamma_a-a+f(a)}(\mu;n)
\prod_{a =c+1}^{k} h_{\gamma_a}(\mu;n).
\end{align*}

\vspace{2mm}
\noindent{\bf Claim 2.}
{\em 
For each marked partition $(\pi;p)$ with $1 \leq |\pi| \leq c$, we set
$$
v(\pi;p) :=
\sum_{f \in S_c(\pi;p)} h_{r+|\omega(f)|}(\mu;i_1,\dots,i_m)
h_{\gamma-\omega(f)}(\mu;n).
$$
Also define $v(\pi)$ be as in Claim~1.
Then, for $1 \leq b \leq c$ and each partition $\pi$ of $(c-b)$, we have that
$v(\pi) + \sum_{q=1}^b v(\pi\cup\{q\};q) \in J_{\rhd \gamma}$.}

\vspace{1mm}
\noindent
To establish this, fix $1 \leq b \leq c$ and 
$\pi$ as in the claim and let 
$$
\Omega := \left\{(\bj;g)\:\Bigg|\:
\begin{array}{l}
\bj = (j_1,\dots,j_b)\text{ a tuple of distinct integers with}\\
j_1=c\text{ and } 1 \leq j_2,\dots,j_b < c;\:\:\:
g\text{ a permutation}\\ 
\text{of }\{1,\dots,c\}\setminus\{j_1,\cdots,j_b\}
\text{ of cycle type $\pi$}
\end{array}
\right\}.
$$
Given $(\bj;g) \in \Omega$ and $1 \leq i \leq c$, we let
$f_i(\bj;g):\{1,\dots,c\} \rightarrow \{1,\dots,c\}$ be the 
function mapping $j_1 \mapsto j_2,\dots,j_{b-1} \mapsto j_b,
j_b \mapsto i$ and $j \mapsto g(j)$ for all
$j \in \{1,\dots,c\}\setminus \{j_1,\dots,j_b\}$.
If $i \in \{1,\dots,c\}\setminus\{j_1,\dots,j_b\}$
then $f_i(\bj;g)$ is a quasi-permutation of descent $b$
and marked cycle type $(\pi;p)$ where $p$ is the length of the
cycle of $g$ containing $i$.
Otherwise, we have that $i = j_{b-q+1}$ for some $1 \leq q \leq b$,
and $f_i(\bj;g)$ is 
a quasi-permutation of descent $(b-q)$
and marked cycle type $(\pi\cup\{q\};q)$.
It follows that
$|\omega(f_i(\bj;g))| = c-i$ and 
$$
v(\pi) + \sum_{q=1}^b v(\pi\cup\{q\};q) = 
\sum_{(\bj;g) \in \Omega}
\sum_{i=1}^c h_{r+c-i}(\mu;i_1,\dots,i_m)
h_{\gamma-\omega(f_i(\bj;g))}(\mu;n).
$$
Therefore it suffices to show for each $(\bj;g) \in \Omega$ that
$$
x:=\sum_{i=1}^c h_{r+c-i}(\mu;i_1,\dots,i_m)
h_{\gamma-\omega(f_i(\bj;g))}(\mu;n)
$$
belongs to $J_{\rhd \gamma}$.
Let
$$
y := 
\prod_{\substack{a=1 \\ a \neq j_b}}^c
h_{\gamma_a - a+f_i(\bj;g)(a)}(\mu;n)
\prod_{a =c+1}^k h_{\gamma_a}(\mu;n)
$$
so that
$$
x
=
\sum_{i=1}^c h_{r+c-i}(\mu;i_1,\dots,i_m)
h_{\gamma_{j_b} - j_b+i}(\mu;n)y.
$$
Using (\ref{easyexp}), we can expand
$$
h_{r+c+\gamma_{j_b}-j_b}(\mu;i_1,\dots,i_m,n)
=
\sum_{i=j_b-\gamma_{j_b}}
^{r+c} h_{r+c-i}(\mu;i_1,\dots,i_m) h_{\gamma_{j_b-j_b+i}}(\mu;n).
$$
As $j_b \leq c$, we have that $c + \gamma_{j_b}-j_b \geq \gamma_c$, hence
using (\ref{eq2}) and (\ref{rel1})
we get that
$h_{r+c+\gamma_{j_b}-j_b}(\mu;i_1,\dots,i_m,n) \in I^\lambda_\mu$.
Also using (\ref{eq1}) 
we have that $h_{r+c-i}(\mu;i_1,\dots,i_m) \in I^\lambda_\mu$ 
for $i \leq 0$.
We deduce that
$$
\sum_{i=1}^{r+c} h_{r+c-i}(\mu;i_1,\dots,i_m) h_{\gamma_{j_b-j_b+i}}(\mu;n)\in I^\lambda_\mu.
$$
As $I^\lambda_\mu \subseteq J_{\rhd \gamma}$,
this implies that
$$
x \equiv - \sum_{i = c+1}^{c+r} 
h_{r+c -i}(\mu;i_1,\dots,i_m) h_{\gamma_{j_b}-j_b+i}(\mu;n)y \pmod{J_{\rhd \gamma}}.
$$
But for $i > c$ the term $h_{\gamma_{j_b}-j_b+i}(\mu;n) y$
is of the form $h_\kappa(\mu;n)$ for some $\kappa \in \Par_k$ 
with $|\kappa| > |\gamma|$,
so it belongs to $J_{\rhd \gamma}$.
This proves Claim 2.

\vspace{2mm}
Now we can complete the proof of the lemma.
Let $v(\pi;p)$ be as in Claim 2 and then define $v(\pi)$ as in Claim 1.
From Claims 1 and 2, we get that
$$
\sum_{|\pi|=c} (-1)^{h(\pi)-c} v(\pi) \in J_{\rhd \gamma}.
$$
For $|\pi| = c$, the set $S_c(\pi;p)$ 
appearing in the definition of $v(\pi;p)$
consists of quasi-permutations of 
descent zero, i.e. ordinary permutations.
So the above sum is equal to
$v((1^c)) = 
h_r(\mu;i_1,\dots,i_m)h_\gamma(\mu;n)$
plus a linear combination of terms 
$h_{r+|\omega(f)|}(\mu;i_1,\dots,i_m) h_{\gamma-\omega(f)}(\mu;n)$
for ordinary permutations $f \neq \operatorname{id}$.
It remains to observe for all such $f$ that
$\omega(f) < 0$ in the dominance ordering on $\Z^c$,
hence
$h_{\gamma-\omega(f)}(\mu;n) \in J_{\rhd \gamma}$.
\end{proof}

\begin{Lemma}\label{T1} For $\gamma \in \Par_k$ with
$\gamma \not\subseteq\beta$, we have that $J_{\unrhd\gamma} = 
J_{\rhd\gamma}$. Moreover $J_{\unrhd\gamma} = I^\lambda_\mu$ if $|\gamma| > k(s-k)$.
\end{Lemma}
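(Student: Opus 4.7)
The plan is to derive both assertions from Lemma~\ref{hard} applied with $r=0$. Since $J_{\unrhd\gamma}$ is obtained from $J_{\rhd\gamma}$ by adjoining the single element $h_\gamma(\mu;n)$, the first statement is equivalent to showing $h_\gamma(\mu;n) \in J_{\rhd\gamma}$ whenever $\gamma \not\subseteq \beta$.

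For this, I would pick $c \in \{1,\dots,k\}$ with $\gamma_c > \beta_c$ and split into two cases. If $c \leq k-t$ then $\beta_c = s-k$, so $\gamma_c > s-k$; here Lemma~\ref{hard} with $m = 0$ applies, since (\ref{eq1}) reads $c > 0$ and (\ref{eq2}) reads $\gamma_c > \lambda_1 - \mu_n = s-k$, both of which hold. If instead $c > k-t$ then $\beta_c = 0$ and $\gamma_c \geq 1$; here I would take $m = n-1$ with $i_j = j$, for which (\ref{eq1}) becomes $c > (d-t) - (d-k) = k-t$ and (\ref{eq2}) becomes $\gamma_c > d - (d-k) - k = 0$, again both holding.

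For the moreover statement, assume $|\gamma| > k(s-k)$. Since $\gamma$ has height at most $k$, it cannot fit inside the $k \times (s-k)$ rectangle, so some $\gamma_c > s-k$, and the $m=0$ case above already gives $h_\gamma(\mu;n) \in J_{\rhd\gamma}$. To upgrade this to $h_\gamma(\mu;n) \in I^\lambda_\mu$, the crucial step is a grading argument: any relation $h_\gamma(\mu;n) = u + \sum_\kappa f_\kappa h_\kappa(\mu;n)$ with $u \in I^\lambda_\mu$ and $\kappa \rhd \gamma$ can be taken homogeneous of degree $2|\gamma|$, so $f_\kappa$ must be homogeneous of degree $2(|\gamma|-|\kappa|)$. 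But $\kappa \rhd \gamma$ forces $|\kappa| \geq |\gamma|$, so only $|\kappa| = |\gamma|$ can contribute, necessarily with $f_\kappa \in \C$ and with $\kappa$ strictly dominating $\gamma$ in the usual dominance order.

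This collapses the apparently infinite induction in $\unrhd$ to a finite downward induction on the well-founded dominance order on partitions of fixed size $D > k(s-k)$. The base case is $\gamma = (D)$: no partition of size $D$ strictly dominates it, so the scalar sum is empty and $h_{(D)}(\mu;n) \in I^\lambda_\mu$ directly; the inductive step is identical. This shows $h_\kappa(\mu;n) \in I^\lambda_\mu$ for every $\kappa$ with $|\kappa| > k(s-k)$, and since each generator $h_\kappa(\mu;n)$ of $J_{\unrhd\gamma}$ satisfies $|\kappa| \geq |\gamma| > k(s-k)$, this gives $J_{\unrhd\gamma} = I^\lambda_\mu$. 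The main obstacle is precisely the grading reduction: once one sees that the larger-size $\kappa$'s drop out on degree grounds, the induction on $\unrhd$ (which is not well-founded upward in general) reduces to a routine induction inside a single finite poset.
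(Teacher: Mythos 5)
Your proposal is correct, but it is worth comparing it with the paper's much shorter route. In the essential case (some $c>k-t$ with $\gamma_c>0$) you do exactly what the paper does: apply Lemma~\ref{hard} with $r=0$ and $m=n-1$, for which the right-hand sides of (\ref{eq1}) and (\ref{eq2}) are $k-t$ and $0$ (the paper simply fixes $c=k-t+1$). The divergence is in the other case, $\gamma_1>s-k$. There the paper does not invoke Lemma~\ref{hard} at all: since $\gamma_1>s-k=\lambda_1-\mu_n$, the factor $h_{\gamma_1}(\mu;n)$ is itself one of the generators (\ref{rel1}) of $I^\lambda_\mu$, so $h_\gamma(\mu;n)\in I^\lambda_\mu\subseteq J_{\rhd\gamma}$ outright. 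This stronger conclusion is what makes the ``moreover'' statement a one-liner: if $|\gamma|>k(s-k)$ then every $\kappa\in\Par_k$ with $\kappa\unrhd\gamma$ has $|\kappa|>k(s-k)$, hence $\kappa_1>s-k$ since $\kappa$ has at most $k$ parts, hence $h_\kappa(\mu;n)\in I^\lambda_\mu$, and $J_{\unrhd\gamma}=I^\lambda_\mu$ with no further argument. Because you instead treated the case $\gamma_c>s-k$ via Lemma~\ref{hard} with $m=0$, you only obtained membership in $J_{\rhd\gamma}$, and you then had to recover membership in $I^\lambda_\mu$ by your grading argument together with downward induction on the dominance order among partitions of fixed size. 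That argument is sound (homogeneity of $I^\lambda_\mu$ and of the $h_\kappa(\mu;n)$ does kill all larger-size terms, leaving scalar combinations of same-size strictly dominating $\kappa$'s, and the finite poset induction goes through, the maximal partition $(D)$ serving as base case); it is a nice general device for converting $J_{\rhd\gamma}$-membership into a congruence modulo $I^\lambda_\mu$, but here it is machinery that the single observation about the generator $h_r(\mu;n)$, $r>s-k$, renders unnecessary.
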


\begin{proof}
If $\gamma_1 > s-k=\la_1-\mu_n$ then
$h_{\gamma_1}(\mu;n) \in I^\lambda_\mu$ by (\ref{rel1}),
hence $h_\gamma(\mu;n) \in I^\lambda_\mu \subseteq J_{\rhd \gamma}$.
In particular, this shows $J_{\unrhd \gamma} = I^\lambda_\mu$
when $|\gamma| > k(s-k)$, as in that case all
$\gamma \unlhd \kappa \in \Par_k$ satisfy $\kappa_1 > s-k$.
It remains to show that
$h_\gamma(\mu;n) \in J_{\rhd \gamma}$
for $\gamma \not\subseteq\beta$ with $\gamma_1 \leq s-k$.
In that case, we have that $t \geq 1$ and $\gamma_{k-t+1} > 0$.
Now apply Lemma~\ref{hard}, taking $r=0,
m=n-1$ and $c = k-t+1$ and noting the right hand sides of 
(\ref{eq1}) and (\ref{eq2}) equal $k-t$ and $0$, respectively.
\end{proof}

\begin{Lemma}\label{T2}
Take
$\gamma \in \Par_{k}$ 
such that $\gamma \subseteq\beta$.
Define $(c_1,\dots,c_k)$ and 
$\overline{\la} \in \Lambda^+(n-1,d-k)$ as in 
the statement of Proposition~\ref{pavingl}. 
The quotient $J_{\unrhd \gamma} / J_{\rhd \gamma}$
is spanned by the images of the elements $h(\mathtt{T})$
for $\mathtt{T} \in \Col^\lambda_\mu$ such that
$\mathtt{T}$ has entry $n$ in each of columns $c_1,\dots,c_k$.
\end{Lemma}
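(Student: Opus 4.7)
The plan is to establish a surjective $\C$-linear map $C^{\overline\lambda}_{\overline\mu} \twoheadrightarrow J_{\unrhd\gamma}/J_{\rhd\gamma}$ sending $q+I^{\overline\lambda}_{\overline\mu}$ to $q\,h_\gamma(\mu;n)+J_{\rhd\gamma}$, and then to invoke the inductive hypothesis (the spanning part of Theorem~\ref{bt} with $n$ replaced by $n-1$) asserting that $\{h(\overline{\mathtt T}):\overline{\mathtt T}\in\Col^{\overline\lambda}_{\overline\mu}\}$ spans $C^{\overline\lambda}_{\overline\mu}$. Under the evident bijection $\overline{\mathtt T}\leftrightarrow\mathtt T$ between $\Col^{\overline\lambda}_{\overline\mu}$ and the set of $\mathtt T\in\Col^\lambda_\mu$ having entry $n$ in each of columns $c_1,\ldots,c_k$, the image $h(\overline{\mathtt T})h_\gamma(\mu;n)$ is precisely $h(\mathtt T)$ by the inductive definition of $h(\mathtt T)$, which yields the asserted spanning.

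Well-definedness and surjectivity of this map reduce to two claims. \emph{(a)} For every $p\in P_\mu$, $p\cdot h_\gamma(\mu;n)\in P_{\overline\mu}\cdot h_\gamma(\mu;n)+J_{\rhd\gamma}$. Using $P_\mu=P_{\overline\mu}[h_1(\mu;n),\ldots,h_k(\mu;n)]$, expand $p=\sum_\delta p_\delta h_\delta(\mu;n)$ over partitions $\delta$ with parts in $\{1,\ldots,k\}$ and $p_\delta\in P_{\overline\mu}$. The $\delta=\varnothing$ summand lies in $P_{\overline\mu}\cdot h_\gamma(\mu;n)$; for nonempty $\delta$ with smallest part $\delta_l$, factor $p_\delta h_\delta(\mu;n)h_\gamma(\mu;n)=p_\delta h_{\delta'}(\mu;n)\cdot h_{\gamma\cup\{\delta_l\}}(\mu;n)$ where $\delta'$ denotes $\delta$ with one copy of $\delta_l$ deleted. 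We may assume $k\geq 1$ (else the lemma is vacuous), so $\gamma\subseteq\beta=((s-k)^{k-t})$ has at most $k-t\leq k-1$ nonzero parts (using $t\geq 1$, forced by $\lambda\geq\mu^+$ combined with $\mu_n=k>0$). Hence $\gamma\cup\{\delta_l\}\in\Par_k$ and $\gamma\cup\{\delta_l\}\rhd\gamma$, placing $h_{\gamma\cup\{\delta_l\}}(\mu;n)$ and thus the full product in $J_{\rhd\gamma}$.

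\emph{(b)} $I^{\overline\lambda}_{\overline\mu}\cdot h_\gamma(\mu;n)\subseteq J_{\rhd\gamma}$. Take a generator $h_r(\mu;i_1,\ldots,i_m)=h_r(\overline\mu;i_1,\ldots,i_m)$ of $I^{\overline\lambda}_{\overline\mu}$, so $r>\overline\lambda_1+\cdots+\overline\lambda_m-\mu_{i_1}-\cdots-\mu_{i_m}$, and set $A_m:=(\lambda_1+\cdots+\lambda_m)-(\overline\lambda_1+\cdots+\overline\lambda_m)$. If $A_m=0$, the hypothesis upgrades to $r>\lambda_1+\cdots+\lambda_m-\mu_{i_1}-\cdots-\mu_{i_m}$, placing $h_r(\mu;i_1,\ldots,i_m)$ directly in $I^\lambda_\mu\subseteq J_{\rhd\gamma}$. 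Otherwise, apply Lemma~\ref{hard} with $c=A_m$: inequality (\ref{eq1}) is just the hypothesis on $r$, while (\ref{eq2}) reduces to $\gamma_{A_m}\geq A_m+\lambda_{m+1}-k$. To verify this, use the combinatorial identity $A_m=\#\{i:c_i\geq\lambda_{m+1}+1\}$: each column $c_i$ contributes $1$ to $A_m$ precisely when its removed bottom box lies in the first $m$ rows, i.e., $\lambda^T_{c_i}\leq m$, i.e., $c_i>\lambda_{m+1}$. The smallest such $c_i$ is $c_{k-A_m+1}$, so $c_{k-A_m+1}\geq\lambda_{m+1}+1$; via $c_i=\gamma_{k+1-i}+i$ this rearranges to $\gamma_{A_m}\geq\lambda_{m+1}-k+A_m$. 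The main obstacle is precisely this combinatorial step---choosing $c=A_m$ and verifying the $\gamma_{A_m}$ bound from the box-removal definition of $\overline\lambda$; part (a) is routine.
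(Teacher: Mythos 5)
Your overall strategy is the same as the paper's: the paper makes $J_{\unrhd\gamma}/J_{\rhd\gamma}$ into a cyclic $C^{\overline{\lambda}}_{\overline{\mu}}$-module generated by the image of $h_\gamma(\mu;n)$, which amounts to exactly your claims (a) and (b), and then invokes the inductive spanning hypothesis; your claim (b) — applying Lemma~\ref{hard} with $c=A_m$ ($=$ the number of boxes removed from the first $m$ rows) and verifying (\ref{eq2}) via the column-sequence inequality $\gamma_{A_m}\geq \lambda_{m+1}-k+A_m$ — is precisely the paper's computation, spelled out in slightly more detail.

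There is, however, a genuine error in your justification of (a). The assertion that $t=\lambda_n\geq 1$ is forced by $\lambda\geq\mu^+$ together with $\mu_n=k>0$ is false: take $n=3$, $\lambda=(4,2,0)$, $\mu=(2,2,2)$, so $\lambda\geq\mu^+=(2,2,2)$ and $k=2>0$ but $t=0$. Then $\beta=(2,2)$ and $\gamma=(1,1)\subseteq\beta$ already has $k$ nonzero parts, so your $\gamma\cup\{\delta_l\}$ (e.g.\ $(1,1,1)$) has $k+1$ nonzero parts and does \emph{not} lie in $\Par_k$; hence $h_{\gamma\cup\{\delta_l\}}(\mu;n)$ is not one of the listed generators of $J_{\rhd\gamma}$ and your argument that the product lies in $J_{\rhd\gamma}$ breaks down as written. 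The conclusion of (a) is still true, and the repair is the one the paper uses for its "first family": for nonempty $\delta$, the element $h_\delta(\mu;n)h_\gamma(\mu;n)$ is a symmetric polynomial in $x_{d-k+1},\dots,x_d$ of degree $|\delta|+|\gamma|>|\gamma|$, and since the $h_\kappa(\mu;n)$ for $\kappa\in\Par_k$ form a basis of the symmetric polynomials in these $k$ variables, it is a linear combination of $h_\kappa(\mu;n)$ with $\kappa\in\Par_k$, $|\kappa|>|\gamma|$, each of which lies in $J_{\rhd\gamma}$. (A cosmetic point: the case $k=0$ is not vacuous — the lemma then asserts the spanning of $C^\lambda_\mu$ itself — but there $P_\mu=P_{\overline{\mu}}$ and $h_\gamma(\mu;n)=1$, so your map is trivially well defined and surjective.)
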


\begin{proof}
Consider the homomorphism
$\C[x_1,\dots,x_d] \rightarrow
\C[x_1,\dots,x_{d-k}]$ mapping $x_i\mapsto x_i$ for $1 \leq i \leq d-k$ and $x_i\mapsto 0$ for $d-k+1 \leq i \leq d$.
It restricts to a homomorphism
$P_\mu \rightarrow P_{\overline{\mu}}$
such that $h_r(\mu;i) \mapsto h_r(\overline{\mu};i)$
for $1 \leq i \leq n-1$,
and $h_r(\mu;n) \mapsto 0$ for $r \geq 1$.
Let $\Phi:P_\mu \rightarrow C^{\overline{\lambda}}_{\overline{\mu}}
$ be the composite of this homomorphism
with the natural quotient map $P_{\overline{\mu}} \rightarrow C^{\overline{\lambda}}_{\overline{\mu}}$.

The $P_\mu$-module $J_{\unrhd \gamma} / J_{\rhd \gamma}$
is cyclic, generated by the image of $h_\gamma(\mu;n)$.
We claim that the action of $P_\mu$ on 
$J_{\unrhd \gamma} / J_{\rhd \gamma}$ factors through $\Phi$ to make $J_{\unrhd \gamma} / J_{\rhd \gamma}$ into a well-defined
cyclic $C^{\overline{\la}}_{\overline{\mu}}$-module.
To prove this, we need to show that
$(\ker \Phi) J_{\unrhd \gamma} \subseteq J_{\rhd \gamma}$.
By definition, $\ker\Phi$ is generated by 
the elements $\{h_r(\mu;n)\:|\:r > 0\}$
together with the elements
\begin{equation*}
\left\{
h_r(\mu;i_1,\dots,i_m)\:\bigg|\: 
\begin{array}{l}
m \geq 1, 
1 \leq i_1 < \cdots < i_m \leq n-1,\\
r > \overline{\lambda}_1+\cdots+\overline{\lambda}_m-\mu_{i_1}-\cdots-\mu_{i_m}
\end{array}
\right\}.
\end{equation*}
So we need to show that multiplication by either
of these families of elements
sends $h_\gamma(\mu;n)$ into $J_{\rhd \gamma}$.

The first family is easy to deal with: for $r > 0$ the element
$h_r(\mu;n) h_\gamma(\mu;n)$ is a symmetric polynomial in the variables
$x_{d-k+1},\dots,x_d$
of degree stricly greater than $|\gamma|$. Hence it can be expressed
as a linear combination of $h_\kappa(\mu;n)$'s
with $|\kappa| > |\gamma|$, as the $h_\kappa(\mu;n)$'s
for $\kappa \in \Par_k$
give a basis
for the space of all symmetric polynomials
in $x_{d-k+1},\dots,x_d$.

To deal with the second family, we need to show that
$$
h_r(\mu;i_1,\dots,i_m)h_\gamma(\mu;n) \in J_{\rhd \gamma}
$$
for $m \geq 1$, $1 \leq i_1 < \cdots < i_m \leq n-1$
and $r > \overline{\lambda}_1+\cdots+\overline{\lambda}_m-\mu_{i_1}-\cdots-\mu_{i_m}$.
Let $c := \lambda_1+\cdots+\lambda_m - \overline{\lambda}_1-\cdots-\overline{\lambda}_m$.
If $c = 0$ then $h_r(\mu;i_1,\dots,i_m) \in I^\lambda_\mu$ already,
so there is nothing to do.
So we may assume that $c \geq 1$, and are in the situation of Lemma~\ref{hard}.
The hypothesis (\ref{eq1}) is immediate, so we are left with checking
(\ref{eq2}). To see that, note that
$\overline{\lambda}$ has $c$ fewer boxes than $\lambda$ on the first $m$
rows. So by the definition of $\overline{\lambda}$,
we must have that $k+1-c+\gamma_c > \lambda_{m+1}$.
Hence
\begin{align*}
r + \gamma_c &>
\overline{\lambda}_1+\cdots+\overline{\lambda}_m-\mu_{i_1}-\cdots-\mu_{i_m}
+\lambda_{m+1}-k\\
&\geq
\lambda_1+\cdots+\lambda_m+\lambda_{m+1} -\mu_{i_1}-\cdots-\mu_{i_m}-\mu_n,
\end{align*}
and the claim is proved.

Now to prove the lemma, we have shown that $J_{\unrhd \gamma} / J_{\rhd \gamma}$ is a cyclic
$C^{\overline{\lambda}}_{\overline{\mu}}$-module generated by the image of
$h_\gamma(\mu;n)$. By the induction hypothesis being used to prove the spanning part of Theorem~\ref{bt}, we know that
$C^{\overline{\lambda}}_{\overline{\mu}}$
is spanned by the elements $h(\overline{\mathtt{T}})$ for
$\overline{\mathtt{T}} \in \Col^{\overline{\lambda}}_{\overline{\mu}}$.
Hence $J_{\gamma}$ is spanned by the images of the elements
$h(\mathtt{T}) = h(\overline{\mathtt{T}}) h_\gamma(\mu;n)$ as described in the 
statement of the lemma.
\end{proof}

Now we can complete the proof of the spanning part of Theorem~\ref{bt}.
Enumerate the partitions $\gamma \subseteq\beta$
as 
$\gamma^{(1)}=\varnothing, 
\gamma^{(2)}, \dots, \gamma^{(N-1)},\gamma^{(N)} = \beta$
so that $\gamma^{(i)} \unlhd \gamma^{(j)}$ implies $i \leq j$.
Let $J_i$ be the ideal of $C^\lambda_\mu$
generated by 
$\{h_{\gamma^{(j)}}(\mu;n)\:|\:i \leq j \leq N\}$. Then
$$
C^\lambda_\mu = J_1 \geq \cdots\geq J_N \geq J_{N+1} := \{0\}
$$
is a filtration of $C^\lambda_\mu$.
Lemma~\ref{T1} 
implies for $i=1,\dots,N$ that the 
canonical map
$J_{\unrhd\gamma^{(i)}} \rightarrow J_i$ induces a surjection
$J_{\unrhd\gamma^{(i)}} / J_{\rhd\gamma^{(i)}} \twoheadrightarrow J_i / J_{i+1}$.
Hence by Lemma~\ref{T2} we get that
$J_i / J_{i+1}$ is spanned by the images of the $h(\mathtt{T})$ for
$\mathtt{T} \in \Col^\lambda_\mu$ with
entry $n$ in each of columns $c_1,\dots,c_k$, where 
$(c_1,\dots,c_k)$ is the column sequence of $\gamma^{(i)}$.
Hence the $h(\mathtt{T})$ for all $\mathtt{T} \in \Col^\lambda_\mu$
span $C^\lambda_\mu$ itself.

\section{Proof of Theorem~\ref{mt2}}\label{sbm}

In order to prove Theorem~\ref{mt2},
we need to exploit the 
construction
of the Springer representations via 
perverse sheaves. There are two basic
approaches, one (by restriction) due
to Lusztig, Borho and Macpherson \cite{BM1},
and the other (by Fourier transform) due to Kashiwara and Brylinski
\cite{Brylinski}. We refer to
\cite[ch.13]{Jantzen} and \cite[$\S$6]{Gi2} for more recent accounts
of these two approaches,
and also \cite[$\S$VI.15]{KW} which explains in detail the relationship between 
them.
Some of these references work 
in terms of \'etale cohomology
so require some translation before they can be used in our complex setting.

For any complex variety $Y$,
we let $D^b(Y)$ be the bounded derived category of constructible
sheaves of $\C$-vector spaces on $Y$; see e.g. \cite[$\S$3]{Gi2}.
Let $$
\mathbf{D}:D^b(Y) \rightarrow D^b(Y)
$$ be the 
Verdier duality functor.
Let $\Perv(Y)$ be the (abelian) full subcategory of $D^b(Y)$ 
consisting of perverse sheaves; see e.g. \cite[$\S$4]{Gi2}.
The constant sheaf on $Y$ is denoted
$\mathbb{C}_Y$, which we often
view as an object in $D^b(Y)$ concentrated in degree zero.
For $\mathcal M \in D^b(Y)$, $\mathcal M[i]$ denotes the 
object obtained from $\mathcal M$ by translating down by $i$.
If $V = \bigoplus_{j \in \Z} V_j$ is a graded vector space,
we'll write $\mathcal M \otimes V$ for
$\bigoplus_{j \in \Z} \mathcal M[-j] \otimes V_j$,
so $\mathcal M \otimes (V[i]) = (\mathcal M[i]) \otimes V 
= (\mathcal M \otimes V)[i]$.

Assume from now on that $Y$ is irreducible and smooth.
In that case, $\C_Y[\dim Y]$ is a perverse sheaf.
For a holomorphic vector bundle $E \rightarrow Y$,
let $D^b_{mon}(E)$ be the derived category of the category
of bounded complexes of sheaves of $\C$-vector spaces
on $E$ whose cohomology sheaves are monodromic, i.e. locally constant
over orbits of the natural $\C^\times$-action on $E$.
Let $\Perv_{mon}(E)$ be the full subcategory of $\Perv(E)$ consisting
of the monodromic perverse sheaves.
We will need the Fourier transform
$$
\mathbf{F}:D^b_{mon}(E) \rightarrow D^b_{mon}(E^*),
$$
where $E^* \rightarrow Y$ is the dual bundle;
see
\cite[$\S$8]{Gi2} or
\cite[$\S$6]{Brylinski}
for its definition (our $\mathbf{F}$ is the normalized 
Fourier transform denoted $\widetilde{\mathcal F}$ on \cite[p.69]{Brylinski}).
The Fourier transform induces an equivalence of categories
$$
\mathbf{F}:\Perv_{mon}(E) \rightarrow \Perv_{mon}(E^*)
$$
(see \cite[Corollaire 7.23]{Brylinski}), which
corresponds under the Riemann-Hilbert correspondence
to the formal Fourier transform 
on holonomic $D$-modules with regular singularities
(see \cite[Th\'eor\`eme 7.24]{Brylinski}).

\begin{Lemma}\label{vbl}
Let $E \rightarrow Y$ be a vector bundle on the smooth irreducible
variety $Y$ as above.
Let $\iota:V\hookrightarrow E$ be a sub-bundle with annihilator $\bar\iota:V^\circ \hookrightarrow E^*$.
Also let $\hat\iota:\bz\hookrightarrow E$ be the zero sub-bundle.
The unit of adjunction
$\Id \rightarrow \bar \iota_* \circ \bar \iota^{-1}$ defines a
canonical map $\res:\C_{E^*}\rightarrow \bar\iota_* \C_{V^\circ}$.
Similarly there is a canonical map
$\res:\iota_* \C_V
\rightarrow \hat\iota_* \C_{\sbz}$
defined by applying 
$\iota_*$ to the unit of adjunction for the inclusion $\bz \hookrightarrow V$.
There are unique (up to scalars) horizontal isomorphisms
in the following diagram:
$$
\begin{CD}
\mathbf{DF}(\hat\iota_* \C_{\sbz}[ \dim V])&@>\sim>>&\C_{E^*}[\dim V^\circ]
\\
@V\mathbf{DF}(\res) VV&&@VV \res V\\
\mathbf{DF}(\iota_* \C_V[\dim V])&@>\sim>>&
\bar\iota_* \C_{V^\circ}[\dim V^\circ]
\end{CD}
$$
Moreover the scalars can be chosen so the diagram commutes.
\end{Lemma}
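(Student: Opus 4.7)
The plan is to realize both rows of the diagram as instances of one canonical identification: the Fourier transform of the constant sheaf on a sub-bundle. For any sub-bundle $\kappa \colon W \hookrightarrow E$ with annihilator $\bar\kappa \colon W^\circ \hookrightarrow E^*$, there is a canonical isomorphism of monodromic perverse sheaves
$$
\mathbf{F}\bigl(\kappa_* \C_W[\dim W]\bigr) \;\cong\; \bar\kappa_*\, \C_{W^\circ}[\dim W^\circ]
$$
on $E^*$ (a standard consequence of Brylinski's computation; see \cite[$\S$6]{Brylinski}). Since $\bar\kappa$ is a closed immersion of smooth varieties, $\bar\kappa_*\C_{W^\circ}[\dim W^\circ]$ is Verdier self-dual, so the same formula holds with $\mathbf{F}$ replaced by $\mathbf{DF}$. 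Specializing to $W = \bz$ (so that $W^\circ = E^*$ and $\bar\kappa = \id$) yields the top horizontal isomorphism of the diagram, and specializing to $W = V$ yields the bottom one; note that the shift $[\dim V^\circ]$ on the right of the top row is forced since $\dim \bz^\circ = \dim E^*$, which in our ranks matches $\dim V^\circ$ only after the compensating shift $[\dim V]$ used on the left.

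All four objects appearing in the diagram are simple perverse sheaves on $E$ or $E^*$, namely (shifts of) IC-sheaves of smooth closed subvarieties. By Schur's lemma, any isomorphism between two of them is unique up to a non-zero scalar, which gives the asserted uniqueness of the horizontal identifications. For commutativity, I would argue that the above isomorphism is natural in $W$: the inclusion $\bz \hookrightarrow V$ of sub-bundles of $E$ is dual, via $\mathbf{F}$, to the inclusion $V^\circ \hookrightarrow E^*$ of sub-bundles of $E^*$, and the corresponding units of adjunction are intertwined by the identifications of the previous paragraph. This naturality can be verified either by reducing to the local trivial model $Y \times \mathbb{A}^n$ with $V = Y \times \mathbb{A}^r$, where the entire picture is pulled back from the base case $Y = \text{pt}$ and the Fourier calculation is explicit, or by observing that $\mathbf{F}$ applied to the distinguished triangle encoding $\res$ on the $E$-side is a distinguished triangle which, by simplicity of its terms, must coincide with the one encoding $\res$ on the $E^*$-side up to rescaling.

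The main obstacle I anticipate is controlling the scalar ambiguities built into this strategy. A priori the diagram only commutes up to an unknown non-zero scalar, and to make it commute on the nose one must first check that $\mathbf{DF}(\res)$ is a non-zero morphism; this is automatic from the fact that $\mathbf{DF}$ is an equivalence of categories applied to a non-zero morphism. Granted non-vanishing, Schur's lemma on the simple source and target objects then allows one to absorb the leftover scalar by rescaling one of the two horizontal isomorphisms, producing a strictly commuting diagram as claimed.
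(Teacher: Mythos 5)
Your existence and uniqueness arguments coincide with the paper's: the horizontal maps come from the standard computation $\mathbf{F}(\iota_*\C_V[\dim V])\cong\bar\iota_*\C_{V^\circ}[\dim V^\circ]$ (specialized also to the zero sub-bundle), upgraded to $\mathbf{DF}$ via self-duality, with uniqueness from one-dimensionality of the relevant $\hom$-spaces. Where you genuinely diverge is the commutativity: the paper simply cites Remark III.13.6 of Kiehl--Weissauer, whereas you supply a direct rigidity argument, and your closing paragraph does work --- but for a slightly different reason than the one you name. The point is not Schur's lemma (the source and target of the two composites are non-isomorphic shifted simple objects, so Schur's lemma by itself gives nothing useful here); it is that the whole space containing both composites is one-dimensional, namely $\hom\bigl(\C_{E^*}[\dim V^\circ],\,\bar\iota_*\C_{V^\circ}[\dim V^\circ]\bigr)\cong\hom(\C_{E^*},\bar\iota_*\C_{V^\circ})\cong H^0(V^\circ;\C)\cong\C$ by adjunction and connectedness of $V^\circ$. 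Combined with $\mathbf{DF}(\res)\neq 0$ --- for which you need $\mathbf{F}$ to be faithful on the derived monodromic category, not merely the equivalence on perverse sheaves quoted from Brylinski, since $\res$ on the $E$-side is not a morphism of perverse sheaves (this is standard but worth flagging) --- both composites are non-zero elements of a one-dimensional space, hence proportional, and rescaling one horizontal isomorphism makes the square commute on the nose. Given this, your middle paragraph is dispensable: the local-model reduction is plausible but not carried out, and the distinguished-triangle remark is not correct as stated (a triangle is not determined by its terms, and one would still have to match the maps), so I would drop it in favour of the $\hom$-space count, which is all the ``naturality'' you need. What your route buys is a self-contained proof of the commutativity in place of the paper's citation; what it costs is the need to be precise about exactly which $\hom$-space is one-dimensional and about faithfulness of the Fourier transform at the derived level.
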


\begin{proof}
As $\bar\iota_* \C_{V^\circ}[\dim V^\circ]$ is self-dual,
the existence of the bottom isomorphism amounts to 
the assertion that $\mathbf{F}(\iota_*\C_V[\dim V]) \cong \bar\iota_*
\C_{V^\circ}[\dim V^\circ]$, which is a basic property of Fourier
transform; see \cite[Proposition 8.3(4)]{Gi2} or 
\cite[Corollary III.13.4]{KW} in the \'etale setting. The uniqueness of the bottom isomorphism follows as $\End(\bar\iota_* \C_{V^\circ}) \cong \C$.
Existence and uniqueness of the top isomorphism is proved in a similar
way.
Finally the commutativity of the diagram is justified in \cite[Remark III.13.6]{KW} (we have applied $\mathbf{D}$ to the statement there).
\end{proof}

Now let $\mathfrak{g} := \mathfrak{gl}_d(\C)$ and
$\mathfrak{b}$ be the Borel subalgebra of upper triangular matrices.
As usual 
$X$ and $X_\mu$ denote the varieties of full flags
and partial flags of type $\mu$ in $\C^d$.
We abbreviate
$$
r := 2\dim X = d(d-1),
\qquad
r_\mu := 2\dim X_\mu = d(d-1)-2d_\mu.
$$
We will always identify $\mathfrak{g}$ with $\mathfrak{g}^*$ via the trace form,
 noting that $\mathfrak{b}^\perp$ is the nilpotent radical $\mathfrak{n}$
of $\mathfrak{b}$.
We can view $\mathfrak{g}$ as a self-dual 
vector bundle over a point, so that the Fourier transform for
$\mathfrak{g}$ gives a self-equivalence
$$
\mathbf{F}:\Perv_{mon}(\mathfrak{g}) \rightarrow \Perv_{mon}(\mathfrak{g}).
$$
We also work with the trivial vector bundles
$\mathfrak{g}\times X \rightarrow X$
and $\mathfrak{g} \times X_\mu \rightarrow X_\mu$. These are again identified
with their duals, so Fourier transform
gives two more self-equivalences
\begin{align*}
\mathbf{F}:\Perv_{mon}(\mathfrak{g}\times X) &\rightarrow \Perv_{mon}(
\mathfrak{g}\times X),\\
\mathbf{F}:\Perv_{mon}(\mathfrak{g}\times X_\mu) &\rightarrow \Perv_{mon}(
\mathfrak{g}\times X_\mu).
\end{align*}

Let $\mathcal N$ be the nilpotent cone in $\mathfrak{g}$, and set
\begin{align*}
\widetilde{\mathcal{N}} &:= 
\{(x,(U_0,\dots,U_d)) \in \mathcal{N} \times X\:|\:x U_i \subseteq U_{i-1}\text{ for each }i=1,\dots,d\},\\
\widetilde{\mathfrak{g}} &:= 
\{(x,(U_0,\dots,U_d)) \in \mathfrak{g} \times X\:|\:x U_i \subseteq U_i\text{ for each }i=1,\dots,d\}.
\end{align*}
These are sub-bundles of $\mathfrak{g}\times X \rightarrow X$,
with fibers isomorphic to $\mathfrak{n}$ and $\mathfrak{b}$, respectively.
In fact, 
$\widetilde{\mathfrak{g}}$ is the annihilator of $\widetilde{\mathcal{N}}$ in 
the self-dual bundle $\mathfrak{g}\times X$, while
$\widetilde{\mathcal{N}}$ is canonically isomorphic
to the cotangent bundle $T^*X$ of the flag variety;
see e.g. \cite[$\S$6.5]{Jantzen}.
Hence both varieties are smooth and irreducible,
with 
$\dim \widetilde{\mathcal N}=2\dim X = r$
and $\dim \widetilde{\mathfrak{g}} = \dim \mathfrak{g} = d^2$.
Analogously in the parabolic case, we consider
\begin{align*}
\widetilde{\mathcal{N}}_\mu
&:= \{(x,(V_0,\dots,V_n)) \in \mathcal{N}\times X_\mu\:|\:
x V_i \subseteq V_{i-1}\text{ for each }i=1,\dots,n\},\\
\widetilde{\mathfrak{g}}_\mu
&:= \{(x,(V_0,\dots,V_n)) \in \mathfrak{g}\times X_\mu\:|\:
x V_i \subseteq V_i\text{ for each }i=1,\dots,n\},
\end{align*}
which are sub-bundles of 
$\mathfrak{g}\times X_\mu \rightarrow X_\mu$ such that
$\widetilde{\mathfrak{g}}_\mu$ is the annihilator 
of $\widetilde{\mathcal N_\mu}$, and
$\widetilde{\mathcal{N}}_\mu$ is isomorphic to the cotangent bundle
$T^* X_\mu$.
So
$\widetilde{\mathcal{N}}_\mu$ and $\widetilde{\mathfrak{g}}_\mu$ are 
smooth, irreducible varieties 
with
$\dim \widetilde{\mathcal{N}}_\mu = 2\dim X_\mu = r_\mu$
and $\dim \widetilde{\mathfrak{g}}_\mu = \dim \mathfrak{g} = d^2$, respectively.

We will often exploit the following inclusions of sub-bundles:
$$
\begin{picture}(160,70)
\put(0,30){$\mathfrak{g}\times X \stackrel{\iota}{\hookleftarrow} \widetilde{\mathfrak{g}}
\hookleftarrow \widetilde{\mathcal N} \hookleftarrow \{0\}\times X$}
\put(42,56){$_{\bar\iota}$}
\put(67,9){$_{\hat\iota}$}
\put(8.69,43){$\downarrow$}
\put(8.66,18){$\uparrow$}
\put(76.5,44){\line(0,1){6}}
\put(122.4,16){\line(0,1){6}}
\put(44,50){\oval(65,26)[t]}
\put(67,16){\oval(111,26)[b]}
\end{picture}
\begin{picture}(170,70)
\put(20,30){$\mathfrak{g}\times X_\mu \stackrel{{\iota}'}{\hookleftarrow} \widetilde{\mathfrak{g}}_\mu
\hookleftarrow \widetilde{\mathcal N}_\mu \hookleftarrow \{0\}\times X_\mu$}
\put(67,56){$_{{\bar\iota}'}$}
\put(95,9){$_{{\hat\iota}'}$}
\put(28.69,43){$\downarrow$}
\put(28.66,18){$\uparrow$}
\put(106.6,44){\line(0,1){6}}
\put(156.6,16){\line(0,1){6}}
\put(69,50){\oval(75,26)[t]}
\put(94,16){\oval(125,26)[b]}
\end{picture}
$$
Composing the named maps with the first
projections $\pr:\mathfrak{g}\times X \rightarrow \mathfrak{g}$ 
and $\pr':\mathfrak{g}\times X_\mu \rightarrow \mathfrak{g}$ gives
the named maps in the following diagrams:
$$
\begin{picture}(140,70)
\put(0,30){$\mathfrak{g} \stackrel{\pi}{\leftarrow} \widetilde{\mathfrak{g}}
\hookleftarrow \widetilde{\mathcal N} \hookleftarrow \{0\}\times X$}
\put(26,56){$_{\bar\pi}$}
\put(50,9){$_{\hat\pi}$}
\put(0.2,43){$\downarrow$}
\put(0.2,18){$\uparrow$}
\put(53,44){\line(0,1){6}}
\put(98,16){\line(0,1){6}}
\put(28,50){\oval(50,26)[t]}
\put(50.5,16){\oval(95,26)[b]}
\end{picture}\qquad
\begin{picture}(140,70)
\put(20,30){$\mathfrak{g} \stackrel{{\pi}'}{\leftarrow} \widetilde{\mathfrak{g}}_\mu
\hookleftarrow \widetilde{\mathcal N}_\mu \hookleftarrow \{0\}\times X_\mu$}
\put(48,56){$_{{\bar\pi}'}$}
\put(74,9){$_{{\hat\pi}'}$}
\put(21.7,43){$\downarrow$}
\put(21.7,18){$\uparrow$}
\put(77.5,44){\line(0,1){6}}
\put(126.6,16){\line(0,1){6}}
\put(51,50){\oval(53,26)[t]}
\put(75.5,16){\oval(102,26)[b]}
\end{picture}
$$
The morphism $\pi:\widetilde{\mathfrak{g}}\twoheadrightarrow \mathfrak{g}$ here is part of {Grothendieck's simultaneous resolution}
of the quotient of $\mathfrak{g}$ by the adjoint action of $G := GL_d(\C)$.
It is a projective, hence proper, morphism
which is small in the sense of Goresky-Macpherson (see e.g.
\cite[Lemma 13.2]{Jantzen}).
Similarly
the map
$\pi':\widetilde{\mathfrak{g}}_\mu
\twoheadrightarrow \mathfrak{g}$
in the parabolic case is small.
Hence $R \pi_* \C_{\widetilde{\mathfrak{g}}}[d^2]$
and
$R\pi'_* \C_{\widetilde{\mathfrak{g}}_\mu}[d^2]$
are (monodromic) perverse sheaves on $\mathfrak{g}$, which we call the
{\em Grothendieck sheaf} and the {\em partial Grothendieck sheaf},
respectively.

There is a natural action of the
Weyl group $S_d$ on the Grothendieck sheaf.
To recall some of the details of the construction of this action,
let $\mathfrak{g}^{rs}$ denote the set of regular semisimple elements in
$\mathfrak{g}$, and set $\widetilde{\mathfrak{g}}^{rs} := \pi^{-1}(\mathfrak{g}^{rs})$. 
Let $\pi^{rs}$ be the restriction of $\pi$
to $\widetilde{\mathfrak{g}}^{rs}$.
Then $\pi^{rs}:\widetilde{\mathfrak{g}}^{rs} \twoheadrightarrow 
\mathfrak{g}^{rs}$ is a regular covering whose automorphism group
is canonically identified with the
Weyl group $S_d$; see e.g. \cite[Proposition 9.3]{Gi2}.
So $S_d$ acts freely on $\widetilde{\mathfrak{g}}^{rs}$ by deck transformations,
and there is a unique isomorphism
between $\mathfrak{g}^{rs}$ and the quotient
$S_d \backslash \widetilde{\mathfrak{g}}^{rs}$ making the following diagram
commute:
\begin{equation}\label{tue}
\begin{CD}
&&\widetilde{\mathfrak{g}}^{rs}\!\!\\
&^{\pi^{rs}}\!\!\swarrow\!\!\!&&\!\!\!\searrow^{\!\operatorname{can}}\\
\mathfrak{g}^{rs}\!\!\!\!\!&@>\sim>>&\!\!\!\!\!\!\!S_d \backslash \widetilde{\mathfrak{g}}^{rs}.
\end{CD}
\end{equation}
The 
action of $S_d$ on $\widetilde{\mathfrak{g}}^{rs}$ induces an action
on the local system 
$\pi^{rs}_* \C_{\widetilde{\mathfrak{g}}^{rs}}$. 
Let $IC$ be the intersection cohomology
functor from local systems
on $\mathfrak{g}^{rs}$ to perverse sheaves on $\mathfrak{g}$.
Applying it to 
$\pi^{rs}_* \C_{\widetilde{\mathfrak{g}}^{rs}}$,
we get an induced
action of $S_d$ on the perverse sheaf $IC(\pi^{rs}_* \C_{\widetilde{\mathfrak{g}}^{rs}})$.
Finally, as in \cite[Corollary 9.2]{Gi2} or \cite[Theorem 13.5]{Jantzen}, 
the smallness of $\pi$ implies that there is a unique isomorphism
\begin{equation}\label{shiso}
\kappa:IC(\pi^{rs}_* \C_{\widetilde{\mathfrak{g}}^{rs}}) \stackrel{\sim}{\rightarrow}
R \pi_* \C_{\widetilde{\mathfrak{g}}}[d^2]
\end{equation}
such that the composition
\begin{equation*}
\pi^{rs}_* \C_{\widetilde{\mathfrak{g}}^{rs}} \stackrel{\sim}{\rightarrow}
\mathcal H^{-d^2} 
(IC(\pi^{rs}_* \C_{\widetilde{\mathfrak{g}}^{rs}}))|_{\mathfrak{g}^{rs}}
\stackrel{\sim}{\rightarrow}
\mathcal H^0 
 (R\pi_* \C_{\widetilde{\mathfrak{g}}})|_{\mathfrak{g}^{rs}}
\stackrel{\sim}{\rightarrow} \pi^{rs}_* \C_{\widetilde{\mathfrak{g}}^{rs}}
\end{equation*}
is the identity, where the first 
and last isomorphisms are canonical
and the middle one is induced by $\kappa$.
Using the isomorphism $\kappa$, we transport the action of $S_d$
to get the desired action on 
$R \pi_* \C_{\widetilde{\mathfrak{g}}}[d^2]$.
It is important to note that
this action
induces an algebra isomorphism
\begin{equation}\label{medford2}
\C S_d \stackrel{\sim}{\rightarrow} \End(R \pi_* \C_{\widetilde{\mathfrak{g}}}[d^2]).
\end{equation}
This follows by the perverse continuation principle as the action
on
$\pi^{rs}_* \C_{\widetilde{\mathfrak{g}}^{rs}}$
obviously defines an isomorphism
$\C S_d \stackrel{\sim}{\rightarrow} \End(\pi^{rs}_*
\C_{\widetilde{\mathfrak{g}}^{rs}})$.

The partial Grothendieck sheaf 
was studied in detail by Borho and Macpherson, who explained how
to identify it with
$S_\mu$-invariants in the Grothendieck sheaf; see especially \cite[Proposition
2.7(a)]{BM}. We need to reformulate this 
result slightly. Consider the map $\tilde{p}:\widetilde{\mathfrak{g}} \rightarrow
\widetilde{\mathfrak{g}}_\mu$
arising from the restriction of the map $\id \times p: \mathfrak{g}\times X
\rightarrow \mathfrak{g} \times X_\mu$.
Applying $R \pi'_*$ to the unit of adjunction
$\operatorname{Id} \rightarrow \tilde p_* \circ \tilde p^{-1}$
evaluated at $\C_{\widetilde{g}_\mu}$, we get a map
\begin{equation}\label{bmmap}
BM:R \pi'_*\C_{\widetilde{\mathfrak{g}}_\mu}[d^2] \rightarrow 
R \pi_* \C_{\widetilde{\mathfrak{g}}}[d^2].
\end{equation}
Note also as
$\pi^{-1}(\{0\}) = \{0\}\times X \cong X$ and 
${\pi'}^{-1}(\{0\}) =\{0\}\times X_\mu \cong X_\mu$ that the proper base change
theorem gives us canonical isomorphisms
$H^*(X\cfs) \cong 
(R \pi_* \C_{\widetilde{\mathfrak{g}}})_0$ 
and $H^*(X_\mu\cfs) \cong
(R \pi_* \C_{\widetilde{\mathfrak{g}}})_0$ 
between $H^*(X\cfs)$ and $H^*(X_\mu\cfs)$ (viewed here as a complexes of vector
spaces rather than graded vector spaces)
and the stalks of the Grothendieck and partial Grothendieck
sheaves
at the origin. 
Moreover, under the isomorphism
$H^*(X\cfs) \cong 
(R \pi_* \C_{\widetilde{\mathfrak{g}}})_0$,
the classical action
of $S_d$ on $H^*(X\cfs)$ 
agrees with the action of $S_d$ on
$(R \pi_* \C_{\widetilde{\mathfrak{g}}})_0$ 
induced by its action on the Grothendieck sheaf; see e.g. \cite[Lemma 13.6]{Jantzen}.

\begin{Theorem}\label{bm}
The morphism $BM$ just defined gives an isomorphism of perverse sheaves
$$
BM:R \pi'_* \C_{\widetilde{\mathfrak{g}}_\mu}[d^2]
\stackrel{\sim}{\rightarrow}
R \pi_* \C_{\widetilde{\mathfrak{g}}}[d^2]^{S_\mu\inv}.
$$
Moreover, letting $BM_0$ denote the
induced map between stalks at the origin,
the following diagram commutes:
\begin{equation}\label{sta}
\begin{CD}
H^*(X_\mu,\C)&@>p^*>>&H^*(X,\C)\\
@V\operatorname{can} VV&&@VV \operatorname{can} V\\
(R \pi'_* \C_{\widetilde{\mathfrak{g}}_\mu})_0
&@>>BM_0>&
(R \pi_* \C_{\widetilde{\mathfrak{g}}})_0
\end{CD}
\end{equation}
\end{Theorem}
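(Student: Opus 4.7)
The plan is to prove the theorem via the perverse continuation principle: both Grothendieck sheaves are intermediate extensions from the regular semisimple locus $\mathfrak{g}^{rs}$ (by smallness of $\pi$ and $\pi'$, invoking (\ref{shiso})), so the global assertion reduces to a computation of local systems on $\mathfrak{g}^{rs}$.

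First I would restrict to $\mathfrak{g}^{rs}$ and describe both sides explicitly. The covering $\pi^{rs}:\widetilde{\mathfrak{g}}^{rs}\to\mathfrak{g}^{rs}$ has deck group $S_d$ as in (\ref{tue}). A regular semisimple $x$ determines its eigenlines uniquely, and a partial flag of type $\mu$ stabilized by $x$ is the same data as an ordering of the $d$ eigenlines modulo the symmetric group $S_\mu$ acting within each block, so $\widetilde{\mathfrak{g}}_\mu^{rs}$ is canonically identified with the intermediate quotient $S_\mu\backslash\widetilde{\mathfrak{g}}^{rs}$, and the restriction $\tilde p^{rs}$ of $\tilde p$ is precisely this quotient map. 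It follows that ${\pi'}^{rs}_*\C_{\widetilde{\mathfrak{g}}_\mu^{rs}}$ is canonically the subsheaf of $S_\mu$-invariants in $\pi^{rs}_*\C_{\widetilde{\mathfrak{g}}^{rs}}$, and the restriction of $BM$ to $\mathfrak{g}^{rs}$ is exactly the corresponding inclusion.

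Next I would extend the local identification globally. Taking $S_\mu$-invariants is an exact functor on perverse sheaves: it is the image of the idempotent $\frac{1}{|S_\mu|}\sum_{w\in S_\mu} w$ in the endomorphism algebra (\ref{medford2}), hence commutes with the intermediate extension functor $IC$. Applying $IC$ to the inclusion over $\mathfrak{g}^{rs}$ and using the isomorphism $\kappa$ from (\ref{shiso}) for both $\pi$ and $\pi'$, we obtain a canonical isomorphism $R\pi'_*\C_{\widetilde{\mathfrak{g}}_\mu}[d^2]\stackrel{\sim}{\to}R\pi_*\C_{\widetilde{\mathfrak{g}}}[d^2]^{S_\mu\inv}$. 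By the perverse continuation principle (uniqueness of extensions of morphisms between intermediate extensions from $\mathfrak{g}^{rs}$), this global isomorphism must coincide with $BM$, proving the first assertion.

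For the commutativity of diagram (\ref{sta}), I would invoke naturality of the proper base change isomorphism with respect to the unit of adjunction. The vertical identifications come from applying proper base change to $\pi^{-1}(\{0\}) = \{0\}\times X$ and ${\pi'}^{-1}(\{0\}) = \{0\}\times X_\mu$. The definition of $BM$ as $R\pi'_*$ of the unit $\operatorname{Id}\to\tilde p_*\tilde p^{-1}$ evaluated on $\C_{\widetilde{\mathfrak{g}}_\mu}$ translates, after taking stalks at $0$, to $R\hat\pi'_*$ of the unit for the restriction of $\tilde p$ to the fibers over $0$, which is precisely $p:X\to X_\mu$. By naturality this gives the map $H^*(X_\mu\cfs)\to H^*(X\cfs)$ induced by pullback along $p$, namely $p^*$. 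The main obstacle I anticipate is being careful with the $S_\mu$-equivariance on the covering $\widetilde{\mathfrak{g}}^{rs}\twoheadrightarrow\widetilde{\mathfrak{g}}_\mu^{rs}$: one must verify that the $S_\mu$-action via deck transformations of $\pi^{rs}$ is the same as the action induced by the Weyl group action on the Grothendieck sheaf that appears in (\ref{shiso}), so that the invariants on both sides are computed with respect to the \emph{same} action. Once this compatibility is in hand, the remainder of the proof is standard formalism of $IC$-extension and base change.
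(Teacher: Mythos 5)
Your proposal is correct and follows essentially the same route as the paper: identify $\widetilde{\mathfrak{g}}^{rs}_\mu$ with $S_\mu\backslash\widetilde{\mathfrak{g}}^{rs}$ over the regular semisimple locus, pass through $IC$ using the smallness isomorphisms, conclude that the global isomorphism agrees with $BM$ by the perverse continuation principle, and get the stalk diagram from naturality of proper base change. The only cosmetic difference is that the paper imports the global isomorphism from Borho--Macpherson's Proposition 2.7(a) (with its $\kappa'$) rather than rederiving it via the idempotent/$IC$ argument, and your worry about matching the deck-transformation action with the Weyl group action is vacuous here since the paper defines the $S_d$-action on the Grothendieck sheaf precisely by transporting the deck action through $\kappa$.
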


\begin{proof}
The second statement is clear from the definition (\ref{bmmap}) and
naturality of proper base change.

For the first statement,
the idea is to compare our morphism $BM$
with the isomorphism
$R \pi'_* \C_{\widetilde{\mathfrak{g}}_\mu}[d^2]
\stackrel{\sim}{\rightarrow}
R \pi_* \C_{\widetilde{\mathfrak{g}}}[d^2]^{S_\mu\inv}$  constructed in \cite[Proposition
2.7(a)]{BM}.
Let $S_\mu \backslash \widetilde{\mathfrak{g}}^{rs}$ denote the quotient of
$\widetilde{\mathfrak{g}}^{rs}$ by the action of the parabolic subgroup $S_\mu$.
Let $\widetilde{\mathfrak{g}}^{rs}_{\mu} := 
{\pi'}^{-1}(\mathfrak{g}^{rs})$ and
${\pi'}^{\,rs}$ be the restriction of $\pi'$
to $\mathfrak{g}^{rs}$.
The key point is that there is a unique isomorphism
$\widetilde{g}^{rs}_\mu \stackrel{\sim}{\rightarrow}
S_\mu \backslash \widetilde{\mathfrak{g}}^{rs}$ making the following diagram
\begin{equation*}
\begin{CD}
&&\widetilde{\mathfrak{g}}^{rs}\!\\
&\!^{\tilde p^{rs}}\!\!\!\swarrow\!\!\!&&\!\!\!\searrow^{\!\operatorname{can}}\\
\widetilde{\mathfrak{g}}^{rs}_\mu\! &@>\sim>>& \!\!\!\!\!\!\!S_\mu \backslash \widetilde{\mathfrak{g}}^{rs}\\
@V{\pi'}^{\,rs} VV&&@VV\operatorname{can}V\\
\mathfrak{g}^{rs}\!&@>\sim>>&\!\!\!\!\!\!\!S_d \backslash \widetilde{\mathfrak{g}}^{rs}
\end{CD}
\end{equation*}
commute, where the bottom isomorphism comes from (\ref{tue})
and $\tilde p^{rs}$ is the restriction of $\tilde p$.
This isomorphism induces an isomorphism of local systems
${\pi'_*}^{rs} \C_{\widetilde{\mathfrak{g}}^{rs}_\mu}
\stackrel{\sim}{\rightarrow}
\left(\pi^{rs}_* \C_{\widetilde{\mathfrak{g}}^{rs}}\right)^{S_\mu\inv}$.
Now apply the functor $IC$ 
to get an isomorphism of perverse sheaves
$$
IC({\pi'_*}^{rs} \C_{\widetilde{\mathfrak{g}}^{rs}_\mu})
\stackrel{\sim}{\rightarrow}
IC(\pi^{rs}_* \C_{\widetilde{\mathfrak{g}}^{rs}})^{S_\mu\inv}.
$$
The Borho-Macpherson isomorphism 
$R \pi'_* \C_{\widetilde{\mathfrak{g}}_\mu}[d^2]
\stackrel{\sim}{\rightarrow}
R \pi_* \C_{\widetilde{\mathfrak{g}}}[d^2]^{S_\mu\inv}$ is obtained
from this by composing on the right with the inverse of the isomorphism $\kappa$
from (\ref{shiso}) and on the left with the analogously defined isomorphism 
$\kappa':IC({\pi_*'}^{rs} \C_{\widetilde{\mathfrak{g}}_\mu^{rs}}) \stackrel{\sim}{\rightarrow}
R \pi'_* \C_{\widetilde{\mathfrak{g}}_\mu}[d^2]$
from \cite[Proposition 2.6]{BM}.

To complete the proof it remains to observe that our map $BM$ is
equal to the map from the previous paragraph. This follows by the perverse
continuation principle, since the two maps agree on restriction
to $\mathfrak{g}^{rs}$.
\end{proof}

As in the statement of Lemma~\ref{vbl}, the units of adjunction 
associated to the maps $\bar\iota:\widetilde{\mathcal N} \hookrightarrow
\mathfrak{g} \times X$ and $\hat\iota:\{0\}\times X \hookrightarrow \mathfrak{g}\times X$
induce canonical
maps $\res:\C_{\mathfrak{g}\times X} \rightarrow \bar \iota_* \C_{\widetilde{\mathcal N}}$
and
$\res:\hat\iota_* \C_{\{0\}\times X} \rightarrow
\iota_* \C_{\widetilde{\mathfrak{g}}}$.
Applying the derived functor $R\, \pr_*$,
we get morphisms
\begin{align}\label{alph}
\alpha:
\C_{\mathfrak{g}} \otimes H^*(X\cfs)&\rightarrow
R \bar\pi_* \C_{\widetilde{\mathcal{N}}},\\
\beta:
R \pi_* \C_{\widetilde{\mathfrak{g}}}
&\rightarrow 
\delta_0 \otimes H^*(X\cfs),\label{bet}
\end{align} 
where we are
viewing $H^*(X\cfs)$ as a graded vector space with 
$H^i(X,\C)$ in degree $i$, 
$\delta_0$
denotes the constant skyscraper sheaf on $\mathfrak{g}$
supported at the origin, and we have implicitly composed with the
canonical isomorphisms
$R\,\pr_* \C_{\mathfrak{g}\times X} \cong \C_{\mathfrak{g}} \otimes H^*(X\cfs)$, 
$R\,\pr_*(\bar\iota_* \C_{\widetilde{\mathcal N}}) \cong R \bar\pi_* \C_{\widetilde{\mathcal N}}$,
$R\,\pr_*(\hat\iota_*\C_{\{0\}\times X})
\cong R \hat\pi_* \C_{\{0\}\times X} \cong \delta_0 \otimes H^*(X\cfs)$
 and $R\,\pr_*(\iota_* \C_{\widetilde{\mathfrak{g}}}) \cong R \pi_* \C_{\widetilde{\mathfrak{g}}}$.
Also let
\begin{equation}
\gamma:\mathbf{DF}(\delta_0)\otimes H_*(X\cfs)\rightarrow
\mathbf{DF}(R \pi_* \C_{\widetilde{\mathfrak{g}}})
\end{equation}
be the morphism obtained from $\beta$ by applying the composite functor $\mathbf{DF}$,
noting that $\mathbf{DF}(\delta_0 \otimes H^*(X\cfs))
\cong \mathbf{DF}(\delta_0) \otimes H_*(X\cfs)$ if
$H_*(X\cfs)$
is identified with the graded dual of the graded vector space $H^*(X\cfs)$ (so
$H_i(X\cfs)$ is in degree $-i$).
Repeating all this in exactly the same way in the parabolic case, we get analogous morphisms
\begin{align}
\alpha':
\C_{\mathfrak{g}} \otimes H^*(X_\mu\cfs)&\rightarrow
R \bar\pi'_* \C_{\widetilde{\mathcal{N}}_\mu},\\
\beta':
R \pi'_* \C_{\widetilde{\mathfrak{g}}_\mu}
&\rightarrow \delta_0 \otimes H^*(X_\mu\cfs),\\
\gamma':\mathbf{DF}(\delta_0)\otimes H_*(X_\mu\cfs)&\rightarrow
\mathbf{DF}(R \pi'_* \C_{\widetilde{\mathfrak{g}}_\mu}).
\end{align}
Finally note 
that there exists a unique (up to scalars) isomorphism
\begin{equation}\label{sigma}
\sigma: \mathbf{DF}(\delta_0[d^2]) \stackrel{\sim}{\rightarrow} \C_{\mathfrak{g}},
\end{equation}
as follows for example by a special case of Lemma~\ref{vbl}.
We fix a choice of such a map.
In the regular case the following proposition is essentially 
\cite[Theorem VI.15.1]{KW}. We are repeating some of the details of the proof
to make it clear that it extends to the parabolic case.

\begin{Proposition}\label{taus}
There exist unique
isomorphisms $\tau$ and $\tau'$ making the following diagrams commute:
\begin{equation}\label{face1}
\begin{CD}
\mathbf{DF}(\delta_0[d^2]) \otimes H_*(X,\C)
&@>\sim>\sigma \otimes PD>&
\C_{\mathfrak{g}} \otimes H^*(X,\C)[r]\\@V\gamma VV&&@VV\alpha V\\
\mathbf{DF}
(R \pi_* \C_{\widetilde{\mathfrak{g}}}[d^2])
&@>\sim>\tau >&
R \bar\pi_* \C_{\widetilde{\mathcal N}}[r]
\end{CD}\:\:\:\:\:\:\:
\end{equation}
\begin{equation}\label{face2}
\begin{CD}
\mathbf{DF}(\delta_0[d^2])\otimes H_*(X_\mu,\C)
&@>\sim>\sigma\otimes PD'>&
\C_{\mathfrak{g}} \otimes H^*(X_\mu,\C) [r_\mu]\\
@V\gamma' VV&&@VV\alpha' V\\
\mathbf{DF}(R \pi'_* \C_{\widetilde{\mathfrak{g}}_\mu}[d^2])
&@>\sim>\tau' >&
R \bar\pi'_* \C_{\widetilde{\mathcal N}_\mu}[r_\mu]
\end{CD}\:\:\:\:\:
\end{equation}
Here $PD:H_{i}(X\cfs) \rightarrow H^{r-i}(X\cfs)$
and
$PD':H_{i}(X_\mu\cfs) \rightarrow H^{r_\mu-i}(X_\mu\cfs)$
are the isomorphisms defined by Poincar\'e duality.
\end{Proposition}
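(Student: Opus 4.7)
The strategy is to apply Lemma~\ref{vbl} to the trivial bundle $\mathfrak{g}\times X\to X$ and then push forward along the first projection to produce (\ref{face1}); the parabolic square (\ref{face2}) results from the same argument with $X$ replaced by $X_\mu$.

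Take $E:=\mathfrak{g}\times X$ as a self-dual vector bundle over $X$ (via the trace form on the fibres), with sub-bundle $\iota:\widetilde{\mathfrak{g}}\hookrightarrow E$; since $\mathfrak{n}=\mathfrak{b}^\perp$, its annihilator is $\bar\iota:\widetilde{\mathcal{N}}\hookrightarrow E$. As $\dim\widetilde{\mathfrak{g}}=d^2$ and $\dim\widetilde{\mathcal{N}}=r$, Lemma~\ref{vbl} supplies a commuting square with horizontal isomorphisms
\begin{equation*}
\begin{CD}
\mathbf{DF}(\hat\iota_*\C_{\{0\}\times X}[d^2])@>\sim>>\C_E[r]\\
@V\mathbf{DF}(\res)VV@VV\res V\\
\mathbf{DF}(\iota_*\C_{\widetilde{\mathfrak{g}}}[d^2])@>\sim>>\bar\iota_*\C_{\widetilde{\mathcal{N}}}[r].
\end{CD}
\end{equation*}

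Now apply $R\,\pr_*$, with $\pr:\mathfrak{g}\times X\to\mathfrak{g}$ the first projection. Because $X$ is projective, $\pr$ is proper, so $R\,\pr_*=R\,\pr_!$: Verdier duality commutes with $R\,\pr_!$ in general, and the Fourier transform commutes with proper pushforward along a base morphism (viewing $E\to X$ as the pull-back of $\mathfrak{g}\to\{\operatorname{pt}\}$ along the proper map $X\to\{\operatorname{pt}\}$). Hence $R\,\pr_*$ commutes with both $\mathbf{D}$ and $\mathbf{F}$, and after pushforward the four corners of the square become $\mathbf{DF}(\delta_0\otimes H^*(X\cfs)[d^2])$, $\C_\mathfrak{g}\otimes H^*(X\cfs)[r]$, $\mathbf{DF}(R\pi_*\C_{\widetilde{\mathfrak{g}}}[d^2])$, and $R\bar\pi_*\C_{\widetilde{\mathcal{N}}}[r]$, while the right and left vertical arrows become respectively $\alpha$ and $\gamma=\mathbf{DF}(\beta)$ by the very definitions (\ref{alph})--(\ref{bet}). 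The top left corner is rewritten as $\mathbf{DF}(\delta_0[d^2])\otimes H_*(X\cfs)$ using the tautological identification $\mathbf{D}(\mathcal{M}\otimes V)\cong\mathbf{D}(\mathcal{M})\otimes V^\vee$ for finite dimensional graded $V$, together with $H^*(X\cfs)^\vee\cong H_*(X\cfs)$.

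The principal obstacle is the identification of the resulting top horizontal arrow as $\sigma\otimes PD$ after fixing the scalar $\sigma$ from (\ref{sigma}), where $PD:H_i(X\cfs)\stackrel{\sim}{\to}H^{r-i}(X\cfs)$ is Poincar\'e duality. This arises by rewriting $\mathbf{D}(R\hat\pi_*\C_{\{0\}\times X})$ in two ways, either as $\delta_0\otimes H_*(X\cfs)$ (pushing $\mathbf{D}$ through the graded tensor) or as $R\hat\pi_*\mathbf{D}(\C_X)=R\hat\pi_*\C_X[r]=\delta_0\otimes H^*(X\cfs)[r]$ (commuting $\mathbf{D}$ with the proper $R\hat\pi_*$), and comparing the two via the smooth-duality isomorphism $\omega_X\cong\C_X[r]$; this comparison is exactly $PD$. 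With that in place, the bottom arrow of the pushed-forward square provides the desired isomorphism $\tau$ and (\ref{face1}) commutes. Uniqueness of $\tau$ follows by restricting the square to the open stratum $\mathfrak{g}^{rs}$, where both rows degenerate to isomorphisms of shifts of local systems and the commutativity condition pins down $\tau|_{\mathfrak{g}^{rs}}$; perverse (intersection-cohomology) continuation then forces uniqueness on all of $\mathfrak{g}$. Replacing $X,\pi,\iota,\bar\iota,\hat\iota,r$ throughout by $X_\mu,\pi',\iota',\bar\iota',\hat\iota',r_\mu$ yields $\tau'$ and (\ref{face2}) verbatim.
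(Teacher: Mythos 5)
Your existence argument is essentially the paper's: the same application of Lemma~\ref{vbl} with $E=\mathfrak{g}\times X$, $V=\widetilde{\mathfrak{g}}$, $V^\circ=\widetilde{\mathcal N}$, followed by $R\,\pr_*$ using that $\mathbf{D}$ and $\mathbf{F}$ commute with the proper pushforward. One point you gloss over there: the isomorphism $R\,\pr_*\circ\mathbf{F}\cong\mathbf{F}\circ R\,\pr_*$ is only given implicitly, so after the identifications the top horizontal arrow is a priori of the form $\sigma\otimes(\text{some graded automorphism})\circ PD$ rather than $\sigma\otimes PD$; besides the Verdier/Poincar\'e-duality comparison you do carry out, one must also show that the automorphism $\zeta$ of $H^*(X\cfs)$ induced on the Fourier side is a scalar. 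The paper does this by applying $\rho^!$ for $\rho:\{0\}\hookrightarrow\mathfrak{g}$ and observing that both composite functors are global hypercohomology, so the natural isomorphism between them is unique up to scalar.

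The serious gap is uniqueness. Restriction to $\mathfrak{g}^{rs}$ gives nothing: the Spring\-er sheaf $R\bar\pi_*\C_{\widetilde{\mathcal N}}[r]$ is supported on the nilpotent cone, which is disjoint from $\mathfrak{g}^{rs}$, and $\mathbf{DF}(R\pi_*\C_{\widetilde{\mathfrak{g}}}[d^2])$ (being isomorphic to it) also vanishes there, so on $\mathfrak{g}^{rs}$ the bottom row of (\ref{face1}) is the zero map between zero objects and the commutativity condition pins down nothing. Nor is there any intermediate-extension continuation to fall back on: the Springer sheaf is a direct sum of $IC$ sheaves on the various nilpotent orbits with multiplicities, and $\End(R\pi_*\C_{\widetilde{\mathfrak{g}}}[d^2])\cong\C S_d$ by (\ref{medford2}) is $d!$-dimensional, so a morphism out of it is not determined by its restriction to any open stratum of the support. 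What is actually needed is that precomposition with $\gamma$ is injective on Hom spaces, equivalently (after applying $\mathbf{DF}$) that postcomposition with $\beta$ is injective. The paper checks this at the stalk at the origin: there $\beta_0=\id$ on $H^*(X\cfs)$, and the map $\End(R\pi_*\C_{\widetilde{\mathfrak{g}}})\rightarrow\End(H^*(X\cfs))$, $f\mapsto f_0$, is injective because the endomorphism algebra is $\C S_d$ and, by Chevalley's theorem on the coinvariant algebra, $H^*(X\cfs)\cong\C S_d$ as an $S_d$-module, i.e.\ the stalk is a faithful module; the parabolic case then uses Theorem~\ref{bm} and the semisimplicity of $\C S_d$ to realize the partial Grothendieck sheaf as a summand. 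Your proposal contains no substitute for this faithfulness argument, so uniqueness (and hence also uniqueness of $\tau'$) is not established.
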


\begin{proof}
We just sketch the proof for $\tau$, since the argument for $\tau'$ is
similar. 

For existence, we apply Lemma~\ref{vbl}
with $E = \mathfrak{g}\times X = E^*$, $V = \widetilde{\mathfrak{g}}$ and
$V^\circ = \widetilde{\mathcal N}$ to get a commuting diagram
$$
\begin{CD}
\mathbf{DF}(\hat\iota_* \C_{\{0\}\times X}[d^2])&@>\sim>>&\C_{\mathfrak{g}\times X}[r]
\\
@V\mathbf{DF}(\res) VV&&@VV \res V\\
\mathbf{DF}(\iota_* \C_{\widetilde{\mathfrak{g}}}[d^2])&@>\sim>>&
\bar\iota_* \C_{\widetilde{\mathcal N}}[r]
\end{CD}
$$
Now apply $R\,\pr_*$, using Verdier duality and the 
fact that Fourier transform commutes with direct images (see 
\cite[Proposition 6.8]{Brylinski} or \cite[Theorem III.13.3]{KW} in the \'etale setting) which
together give  us an isomorphism of functors $\mathbf{DF} \circ R\, \pr_*
\stackrel{\sim}{\rightarrow}
R\, \pr_* \circ \mathbf{DF}$. We get the following commutative diagram:
$$
\begin{CD}
\mathbf{DF}(R\,\pr_* (\hat\iota_* \C_{\{0\}\times X}[d^2]))&\:\stackrel{\sim}{\longrightarrow}\:
&R\,\pr_*(\mathbf{DF}(\hat\iota_* \C_{\{0\}\times X}[d^2]))&\:\stackrel{\sim}{\longrightarrow}\:&R\,\pr_*(\C_{\mathfrak{g}\times X}[r])
\\
@V\mathbf{DF}(R\,\pr_*(\res)) VV@VR\,\pr_*(\mathbf{DF}(\res)) VV@V R\,\pr_*(\res) VV\\
\mathbf{DF}(R\,\pr_* (\iota_*
\C_{\widetilde{\mathfrak{g}}}[d^2]))&\:\stackrel{\sim}{\longrightarrow}\:& 
R\,\pr_*(\mathbf{DF}(\iota_* \C_{\widetilde{\mathfrak{g}}}[d^2]))&\:\stackrel{\sim}{\longrightarrow}\:&
R \,\pr_*(\bar\iota_* \C_{\widetilde{\mathcal N}}[r])
\end{CD}
$$
Recalling the identifications made in the definitions of $\alpha$, $\beta$ and
$\gamma$ above, this gives the commuting square (\ref{face1}) on checking that the top map in the diagram
constructed is equal up to
a scalar to the map $\sigma \otimes PD$ at the top of (\ref{face1}).
The latter statement amounts to showing that the graded
vector space maps $\zeta:H^*(X\cfs) \rightarrow H^*(X\cfs)$ and
$\xi:H_*(X\cfs) \rightarrow H^*(X\cfs)[r]$ defined by the following
commutative diagrams are equal (up to scalars) to the maps $\id$ and
$PD$, respectively
(the vertical identifications in these diagrams are various maps induced by unique up to
scalars isomorphisms):
$$
\begin{CD}
\mathbf{F}(R\, \pr_*(\hat\iota_* \C_{\{0\}\times X}[d^2]))&@>\sim>>&R\,\pr_*(\mathbf{F}(\hat\iota_* \C_{\{0\}\times X}[d^2]))\\
@|&&@|\\
\C_\mathfrak{g} \otimes H^*(X\cfs) [2d^2]&@>\id \otimes \zeta>>&\C_{\mathfrak{g}}\otimes H^*(X\cfs) [2d^2]
\end{CD}
$$
$$
\begin{CD}
\mathbf{D}(R\, \pr_* (\C_{\mathfrak{g}\times
  X}[2d^2]))&@>\sim>>&R\,\pr_*(\mathbf{D} 
(\C_{\mathfrak{g}\times X}[2d^2]))\\
@|&&@|\\
\C_\mathfrak{g} \otimes H_*(X\cfs)&@>\id \otimes \xi>>&\C_{\mathfrak{g}}\otimes H^*(X\cfs)[r]
\end{CD}
$$
To see that $\xi$ is proportional to $PD$, push-forward to a point
and use the usual connection between Verdier duality and Poincar\'e
duality.
It is more difficult to see that $\zeta$ is proportional to $\id$ because the
isomorphism
$R\,\pr_* \circ \mathbf{F} \cong \mathbf{F} \circ R\,\pr_*$ is only
defined implicitly. One way to avoid this issue is to
apply the functor $\rho^!$ where
$\rho:\{0\}\hookrightarrow \mathfrak{g}$ is the inclusion, 
noting that the induced isomorphism
$\rho^! \circ R\,\pr_* \circ \mathbf{F} \cong \rho^! \circ \mathbf{F} \circ
R\,\pr_*$ of triangulated functors
is actually unique (up to scalars)
as both functors are isomorphic to the global hypercohomology functor
${\mathbf H}^*(\mathfrak{g}\times X,?)$.

For the uniqueness of $\tau$, it suffices to show that the map
$$
\End\left(\mathbf{DF}(R \pi_* \C_{\widetilde{\mathfrak{g}}}[d^2])\right)
\rightarrow
\hom\left(\mathbf{DF}(\delta_0[d^2]) \otimes H_*(X,\C), \mathbf{DF}(R \pi_* \C_{\widetilde{\mathfrak{g}}}[d^2])\right)
$$
mapping $f$ to $f \circ \gamma$
is injective.
Equivalently, the map
$$
\End\left(R \pi_* \C_{\widetilde{\mathfrak{g}}}[d^2]\right)
\rightarrow
\hom\left(R
  \pi_* \C_{\widetilde{\mathfrak{g}}}[d^2],\delta_0[d^2] \otimes H^*(X,\C)\right)
$$
mapping $f$ to $\beta \circ f$ is injective. To see the latter
statement, suppose that 
$\beta \circ f = 0$. Then the induced map $(\beta
\circ f)_0 = \beta_0 \circ f_0$ between stalks at $0$ is also
zero.
But the stalks at $0$ of all the spaces under consideration are
naturally identified with $H^*(X,\C)$ in such a way 
that $\beta_0 = \operatorname{id}$, hence we get that $f_0 = 0$.
It remains to observe that the map
$$
\End(R \pi_* \C_{\widetilde{\mathfrak{g}}})
\rightarrow
\End(H^*(X\cfs)), \qquad 
f \mapsto f_0
$$
is injective.
This follows from (\ref{medford2}), 
recalling that $H^*(X\cfs)$
is isomorphic to the group algebra
$\C S_d$ as
a module over the symmetric group thanks to a classical
result of Chevalley \cite{Ch} about the coinvariant algebra.
(The analogous statement needed in the parabolic case, namely, that
the map
$$
\End(
R \pi'_* \C_{\widetilde{\mathfrak{g}}_\mu})
\rightarrow
\End(H^*(X_\mu\cfs)),
\qquad f \mapsto f_0
$$
is injective, is a consequence of the injectivity
just established
 in the regular case, since
$R \pi'_* \C_{\widetilde{\mathfrak{g}}_\mu}$ is isomorphic to a summand of
$R \pi_* \C_{\widetilde{\mathfrak{g}}}$ by
Theorem~\ref{bm} and the semisimplicity of $\C S_d$.)
\end{proof}

In view of Proposition~\ref{taus},
$R \bar \pi_* \C_{\widetilde{\mathcal{N}}}[r]$
and
$R \bar \pi'_* \C_{\widetilde{\mathcal{N}}_\mu}[r_\mu]$
are (monodromic) perverse sheaves on $\mathfrak{g}$, which we call the
{\em Springer sheaf} and the {\em Spaltenstein sheaf}, respectively.
Letting $h:\mathcal N \hookrightarrow \mathfrak{g}$ be the inclusion
and $\tilde\pi:\widetilde{\mathcal N} \rightarrow \mathcal N$
be the usual Springer resolution, we have canonical isomorphisms
\begin{equation}\label{pbtm}
R \bar\pi_* \C_{\widetilde{\mathcal N}}[r]
\cong h_*(R \tilde\pi_* \C_{\widetilde{\mathcal{N}}}[r])
\cong h_*(R \pi_* \C_{\widetilde{\mathfrak{g}}}[r]|_{\mathcal N}),
\end{equation}
the first of which follows  as $\bar\pi = h \circ \tilde\pi$,
and the second comes from the proper base change theorem,
noting that
$\widetilde{\mathcal{N}} = \pi^{-1}(\mathcal{N})$.
If we apply the functor $h_*\circ h^{-1}$ to the action of
$S_d$ on the Grothendieck sheaf,
we get an action of $S_d$ on the perverse sheaf on the right hand side
of (\ref{pbtm}). Using the above isomorphisms we lift this to
an
action of $S_d$ on the
Springer sheaf itself.

\begin{Proposition}\label{ab}
The maps $\alpha$ and $\beta$ from (\ref{alph})--(\ref{bet}) 
are $S_d$-equivariant, where the actions of $S_d$ on 
$\C_{\mathfrak{g}} \otimes H^*(X\cfs)$ and
$\delta_0 \otimes H^*(X\cfs)$ are induced by its natural action
on $H^*(X\cfs)$.
\end{Proposition}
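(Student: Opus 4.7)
My plan is to reduce both equivariance statements to easy identifications in $\End_{\text{gr}}(H^*(X\cfs))$ via two adjunctions, each of which identifies the relevant space of sheaf morphisms with endomorphisms of $H^*(X\cfs)$.

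I would first handle $\beta$ via the skyscraper adjunction $\hom(\mathcal M, \delta_0 \otimes V) \cong \hom_{\text{gr}}(\mathcal M_0, V)$ (restriction to the stalk at $0 \in \mathfrak{g}$). Applied to $\mathcal M = R\pi_* \C_{\widetilde{\mathfrak{g}}}$ and $V = H^*(X\cfs)$, this identifies the morphism space in which $\beta$ lives with $\End_{\text{gr}}(H^*(X\cfs))$. Under this identification $\beta_0$ becomes the identity map on $H^*(X\cfs)$ (as noted in the proof of Proposition~\ref{taus}), and the $S_d$-action on $(R\pi_* \C_{\widetilde{\mathfrak{g}}})_0 \cong H^*(X\cfs)$ induced from the Grothendieck sheaf coincides with the classical action by \cite[Lemma 13.6]{Jantzen}. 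Both $w \circ \beta$ and $\beta \circ w$ therefore correspond to the same element $w \in \End_{\text{gr}}(H^*(X\cfs))$, so they agree.

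I would then handle $\alpha$ by the parallel adjunction $\hom(\C_\mathfrak{g} \otimes V, \mathcal M) \cong \hom_{\text{gr}}(V, R\Gamma(\mathfrak{g}, \mathcal M))$. Since $\widetilde{\mathcal N}$ deformation retracts onto its zero section $\{0\}\times X \cong X$, we have $R\Gamma(\mathfrak{g}, R\bar\pi_* \C_{\widetilde{\mathcal N}}) = H^*(\widetilde{\mathcal N}\cfs) \cong H^*(X\cfs)$, and the adjunction identifies the relevant morphism space with $\End_{\text{gr}}(H^*(X\cfs))$. Tracing $\alpha$ through, it corresponds to the pullback $\bar\iota^*: H^*(\mathfrak{g}\times X\cfs) \to H^*(\widetilde{\mathcal N}\cfs)$, which is the identity once both sides are identified with $H^*(X\cfs)$ via their retractions to $X$. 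It then remains to check that the $S_d$-action on $R\Gamma(\mathfrak{g}, R\bar\pi_* \C_{\widetilde{\mathcal N}})$ induced from the Springer sheaf action agrees with the classical action on $H^*(X\cfs)$. Since the Springer sheaf action is defined via $h_* \circ h^{-1}$ from the Grothendieck action, the induced action on global sections fits into a chain of $S_d$-equivariant maps $H^*(\widetilde{\mathfrak{g}}\cfs) \to H^*(\widetilde{\mathcal N}\cfs) \to (R\pi_* \C_{\widetilde{\mathfrak{g}}})_0$, each of which becomes the identity under the natural identifications with $H^*(X\cfs)$; the action on the last term is classical by the result cited above.

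The main obstacle will be the careful bookkeeping of the various canonical identifications of $H^*(X\cfs)$ (with $H^*(\widetilde{\mathfrak{g}}\cfs)$, $H^*(\widetilde{\mathcal N}\cfs)$, and the stalks at $0$ via proper base change), together with the verification that each intervening restriction map is $S_d$-equivariant and reduces to the identity. A conceptually cleaner alternative would be to derive the equivariance of $\alpha$ from that of $\beta$ using the commutative square (\ref{face1}) of Proposition~\ref{taus} and Fourier-Verdier duality, but this in turn would require establishing that the isomorphism $\tau$ is $S_d$-equivariant, which is essentially the compatibility of the Fourier and restriction constructions of the Springer action documented in \cite[\S VI.15]{KW}.
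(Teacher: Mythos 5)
Your argument is correct, but it takes a different route from the paper: the paper disposes of this proposition in one line, citing \cite[Proposition VI.14.1]{KW} and observing that $\alpha$ and $\beta$ are compositions of pairs of the maps treated there, whereas you reconstruct a direct, self-contained proof. Your reduction is sound: since the target of $\beta$ is a skyscraper, morphisms into it are computed on the stalk at $0$, and since the source of $\alpha$ is pulled back from a point, morphisms out of it are computed on derived global sections; in both cases the relevant Hom-space becomes the graded endomorphisms of $H^*(X\cfs)$, the maps $\alpha$ and $\beta$ correspond to the identity, and equivariance then reduces to the fact (recorded in the paper just before Theorem~\ref{bm}, citing \cite[Lemma 13.6]{Jantzen}) that the sheaf-theoretic $S_d$-action on $(R\pi_*\C_{\widetilde{\mathfrak{g}}})_0$ agrees with the classical action on $H^*(X\cfs)$, together with naturality of the restriction maps $H^*(\widetilde{\mathfrak{g}}\cfs)\rightarrow H^*(\widetilde{\mathcal N}\cfs)\rightarrow (R\pi_*\C_{\widetilde{\mathfrak{g}}})_0$ coming from the definition of the Springer-sheaf action via (\ref{pbtm}). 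This is the same kind of stalk-faithfulness that the paper itself exploits for the uniqueness of $\tau$ in Proposition~\ref{taus}, so your proof buys self-containedness at the cost of the bookkeeping you flag, while the paper's citation buys brevity. One caution: the ``cleaner alternative'' you mention --- deducing equivariance of $\alpha$ from that of $\beta$ via (\ref{face1}) and equivariance of $\tau$ --- is circular within the paper's own logic, since the (sign-twisted) equivariance of $\tau$ is Corollary~\ref{hob}, which is deduced from this very proposition; it is legitimate only if imported wholesale from \cite[$\S$VI.15]{KW}.
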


\begin{proof}
This follows as in \cite[Proposition VI.14.1]{KW} (our maps $\alpha$ and
$\beta$ are compositions of pairs of the maps considered there).
\end{proof}

The following important corollary is essentially \cite[Corollary VI.15.2]{KW}.

\begin{Corollary}\label{hob}
The isomorphism $\tau:\mathbf{DF}(R \pi_* \C_{\widetilde{\mathfrak{g}}}[d^2]) \stackrel{\sim}{\rightarrow}
R \bar\pi_* \C_{\widetilde{\mathcal N}}[r]$ in (\ref{face1}) is $S_d$-equivariant up to
a twist by sign, i.e. we have that $\tau \circ w = \sgn(w) w \circ
\tau$ for each $w \in S_d$, where the action of $S_d$ on
$\mathbf{DF}(R \pi_* \C_{\widetilde{\mathfrak{g}}}[d^2])$ is defined
by applying the functor $\mathbf{DF}$ to its action on the
Grothendieck sheaf.
\end{Corollary}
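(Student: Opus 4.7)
The plan is to exploit the commutative diagram (\ref{face1}) together with Proposition~\ref{ab}, and to isolate the source of the sign twist in the Poincar\'e duality factor of the top horizontal map. Concretely, since (\ref{face1}) commutes, we have $\tau \circ \gamma = \alpha \circ (\sigma \otimes PD)$, so in order to track how $\tau$ interacts with the $S_d$-actions it suffices to track this for the other three maps.

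First I would check that $\alpha$ and $\gamma$ are $S_d$-equivariant. For $\alpha$ this is immediate from Proposition~\ref{ab}. For $\gamma$, observe that by construction $\gamma = \mathbf{DF}(\beta)$ (after the canonical identifications $\mathbf{DF}(\delta_0 \otimes H^*(X\cfs)) \cong \mathbf{DF}(\delta_0) \otimes H_*(X\cfs)$ and $\mathbf{DF}(R\pi_*\C_{\widetilde{\mathfrak g}}) = \mathbf{DF}(R\pi_*\C_{\widetilde{\mathfrak g}})$), so applying $\mathbf{DF}$ to the equivariance of $\beta$ from Proposition~\ref{ab} gives the equivariance of $\gamma$, where on the left side $S_d$ acts only on the $H_*(X\cfs)$ factor (by the contragredient of its natural action on $H^*(X\cfs)$, which is the same action since $\C S_d$ is semisimple).

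Next I would analyze the top horizontal map. The isomorphism $\sigma:\mathbf{DF}(\delta_0[d^2]) \stackrel{\sim}{\to} \C_\mathfrak{g}$ is between sheaves on which $S_d$ acts trivially, so it is trivially $S_d$-equivariant. The sign must therefore come from the Poincar\'e duality isomorphism $PD:H_*(X\cfs) \rightarrow H^*(X\cfs)[r]$. The key classical fact I would invoke is that $PD$ intertwines the (Springer, equivalently classical) action of $S_d$ on $H_*(X\cfs)$ with its action on $H^*(X\cfs)$ twisted by $\sgn$. This follows from the fact that the top cohomology $H^r(X\cfs)$ is the sign representation of $S_d$ (this is Chevalley's theorem \cite{Ch} identifying $H^*(X\cfs)$ with the coinvariant algebra, in which the top degree component is the sign representation, spanned by the class of a point, or equivalently by the Jacobian of the basic symmetric invariants), whereas $H_0(X\cfs)$ is the trivial representation because $X$ is connected; more generally the Poincar\'e pairing $H^i(X\cfs) \otimes H^{r-i}(X\cfs) \rightarrow \C$ is $S_d$-equivariant up to the sign character because the fundamental class of $X$ transforms by $\sgn$.

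Granted these three ingredients, the conclusion is a diagram chase. For any $w \in S_d$ and any section $s$ of $\mathbf{DF}(\delta_0[d^2]) \otimes H_*(X\cfs)$,
\begin{align*}
\tau(w \gamma(s)) &= \tau(\gamma(w s)) = \alpha((\sigma\otimes PD)(w s))\\
&= \alpha(\sgn(w)\, w\, (\sigma\otimes PD)(s)) = \sgn(w)\, w\, \tau(\gamma(s)).
\end{align*}
Since $\gamma$ is surjective on stalks at the origin (as its composition with the projection to the stalk recovers the identity via the identifications used in defining $\beta$ and $\gamma$), and since by the injectivity statement at the end of the proof of Proposition~\ref{taus} a morphism out of $\mathbf{DF}(R\pi_*\C_{\widetilde{\mathfrak g}}[d^2])$ is determined by its composition with $\gamma$, the identity $\tau \circ w = \sgn(w)\, w\circ\tau$ follows. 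The main obstacle is pinning down the sign in Poincar\'e duality precisely; once that classical fact is in hand, the rest is formal and exactly parallels the regular case \cite[Corollary VI.15.2]{KW}.
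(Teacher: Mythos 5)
Your proposal is correct and follows essentially the same route as the paper: equivariance of $\alpha$ and $\gamma$ from Proposition~\ref{ab}, the sign twist in $\sigma\otimes PD$ coming from $H^r(X,\C)$ being the sign representation, and the conclusion via commutativity of (\ref{face1}) together with the uniqueness of $\tau$ (your appeal to injectivity of precomposition with $\gamma$ is exactly the mechanism underlying that uniqueness in Proposition~\ref{taus}).
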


\begin{proof}
Proposition~\ref{ab} implies that the vertical maps $\alpha$ and
$\gamma$ in the diagram (\ref{face1}) are $S_d$-equivariant, where the
action on $\mathbf{DF}(\delta_0[d^2]) \otimes H_*(X\cfs)$ arises from
the dual of the natural action on $H^*(X\cfs)$.
Also the map $\sigma \otimes PD$ at the top of the diagram (\ref{face1}) is
$S_d$-equivariant up to a twist by sign because 
the top cohomology
$H^{r}(X,\C)$ is the sign representation of the symmetric group.
The corollary now follows from the commutativity of the diagram and the
uniqueness of $\tau$ in Proposition~\ref{taus}.
\end{proof}

Now we are ready for the main theorem of the section, which roughly
speaking is the Fourier transform of Borho-Macpherson's Theorem~\ref{bm}.
Recall the isomorphisms $\tau$ and $\tau'$ from Proposition~\ref{taus},
and the morphism $BM$ from (\ref{bmmap}).
Let
\begin{equation}
\overline{BM}:= \tau' \circ \mathbf{DF}(BM) \circ \tau^{-1}:
R \bar\pi_* \C_{\widetilde{\mathcal N}}[r]
\rightarrow
R \bar\pi'_* \C_{\widetilde{\mathcal{N}}_\mu}[r_\mu].
\end{equation}
Although $\tau$ and $\tau'$ depend up to a scalar on the choice of the
map
$\sigma$ in (\ref{sigma}), it is clear that the map $\overline{BM}$ is
 independent
of this choice.

\begin{Theorem}\label{mt3}
The morphism $\overline{BM}$ just defined restricts to an isomorphism of perverse
sheaves 
$$
\overline{BM}:
R \bar\pi_* \C_{\widetilde{\mathcal N}}[r]^{S_\mu\anti}
\stackrel{\sim}{\rightarrow}
R \bar\pi'_* \C_{\widetilde{\mathcal{N}}_\mu}[r_\mu].
$$ 
Moreover, the following diagram commutes:
\begin{equation}\label{lastsq}
\begin{CD}
\C_{\mathfrak{g}} \otimes H^*(X,\C) [r]
&@>> \id \otimes p_* >& \C_{\mathfrak{g}}\otimes
H^*(X_\mu,\C) [r_\mu]\\
@V\alpha VV&&@VV\alpha' V\\
R \bar\pi_* \C_{\widetilde{\mathcal{N}}}[r]&@>>\overline{BM} >&
R \bar\pi'_* \C_{\widetilde{\mathcal{N}}_\mu}[r_\mu]
\end{CD}
\end{equation}
\end{Theorem}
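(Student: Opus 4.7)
My plan is to deduce both statements from Theorem~\ref{bm} by applying the contravariant self-equivalence $\mathbf{DF}$ to $BM$ and transporting the result back through $\tau$ and $\tau'$, using Corollary~\ref{hob} to convert $S_\mu$-invariants on the Fourier side into $S_\mu$-anti-invariants on the Springer side. For the first assertion, since $BM$ is an isomorphism of $R\pi'_*\C_{\widetilde{\mathfrak{g}}_\mu}[d^2]$ onto the direct summand $R\pi_*\C_{\widetilde{\mathfrak{g}}}[d^2]^{S_\mu\inv}$ and $\mathbf{DF}$ is an equivalence of the category of monodromic perverse sheaves that commutes with the finite $S_d$-isotypic decomposition, $\mathbf{DF}(BM)$ restricts to an isomorphism from $\mathbf{DF}(R\pi_*\C_{\widetilde{\mathfrak{g}}}[d^2])^{S_\mu\inv}$ onto $\mathbf{DF}(R\pi'_*\C_{\widetilde{\mathfrak{g}}_\mu}[d^2])$ and vanishes on the complementary isotypic summands. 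By Corollary~\ref{hob}, $\tau$ intertwines the two $S_d$-actions up to a twist by $\sgn$; because $\sgn$ restricted to $S_\mu$ is the sign character of $S_\mu$, this twist exchanges the $S_\mu$-invariant and $S_\mu$-anti-invariant parts. Composing $\tau^{-1}$, $\mathbf{DF}(BM)$, and $\tau'$ therefore produces the required restriction of $\overline{BM}$.

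For the commutative diagram (\ref{lastsq}), the first step will be to upgrade (\ref{sta}) from an identity at stalks at the origin to an equality of morphisms of complexes on $\mathfrak{g}$,
\begin{equation*}
\beta[d^2]\circ BM \ =\ (\id\otimes p^*)\circ\beta'[d^2].
\end{equation*}
This follows because the target $\delta_0\otimes H^*(X,\C)[d^2]$ is supported at the origin, so by the adjunction $\hom(\mathcal{F},\delta_0\otimes V)=\hom(\mathcal{F}_0,V)$ any morphism into it is determined by its action on stalks at $0$, where the asserted equality is exactly (\ref{sta}). Applying $\mathbf{DF}$ and unwinding $\gamma=\mathbf{DF}(\beta[d^2])$, $\gamma'=\mathbf{DF}(\beta'[d^2])$, with the transpose of $\id\otimes p^*$ becoming $\id\otimes(p^*)^\vee$ where $(p^*)^\vee:H_*(X,\C)\to H_*(X_\mu,\C)$ is the linear transpose, converts this into
\begin{equation*}
\mathbf{DF}(BM)\circ\gamma\ =\ \gamma'\circ(\id\otimes (p^*)^\vee).
\end{equation*}
Composing on the left with $\tau'$ and using (\ref{face2}) to rewrite $\tau'\circ\gamma'=\alpha'\circ(\sigma\otimes PD')$, then using (\ref{face1}) to rewrite $\gamma=\tau^{-1}\circ\alpha\circ(\sigma\otimes PD)$, all the $\sigma$'s cancel and we are left with
\begin{equation*}
\overline{BM}\circ\alpha\ =\ \alpha'\circ\bigl(\id\otimes(PD'\circ (p^*)^\vee\circ PD^{-1})\bigr).
\end{equation*}
Finally, the Poincar\'e duality description of the Gysin push-forward identifies $PD'\circ (p^*)^\vee\circ PD^{-1}$ with $p_*$, which completes the verification of (\ref{lastsq}).

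The step I expect to be most delicate is the lift of (\ref{sta}) from stalks to morphisms of complexes on $\mathfrak{g}$. The skyscraper support of the target makes this follow by a derived-category adjunction, but once $\mathbf{DF}$ is applied, careful tracking of the shifts by $d^2$ and of the Poincar\'e duality isomorphisms at the two ends is needed to ensure that all signs and degrees match up correctly.
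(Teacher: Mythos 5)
Your proposal is correct and takes essentially the same route as the paper: the paper packages your chain of identities as a commuting cube whose back face is exactly your sheaf-level upgrade of (\ref{sta}) (justified, as you note, by the skyscraper adjunction), whose left and right faces are Proposition~\ref{taus}, whose top face is the Poincar\'e-duality identification $p_* = PD'\circ (p^*)^\vee\circ PD^{-1}$, and whose bottom face is the definition of $\overline{BM}$. The first assertion is likewise deduced in the paper just as you do, by applying $\mathbf{DF}$ to Theorem~\ref{bm} and converting $S_\mu$-invariants into $S_\mu$-anti-invariants via the sign twist of Corollary~\ref{hob}.
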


\begin{proof}
Consider the following cube:
{\small\begin{diagram} 
\hspace{-18mm}\mathbf{DF}(\delta_0[d^2])\otimes H_*(X,\C)\hspace{-1mm}& & \rTo^{\hspace{-35mm}\scriptstyle\id\otimes p_*\hspace{-38mm}} & & 
\hspace{-1mm}\mathbf{DF}(\delta_0[d^2])\otimes H_*(X_\mu,\C) \hspace{-12mm}& & \\ 
& \rdTo^{\scriptstyle{\sigma\otimes PD}} & & &
\vLine^{\scriptstyle \gamma'} &
\rdTo^{\scriptstyle{\sigma\otimes PD'}} & \\ 
\dTo^{\scriptstyle \gamma} & & \hspace{-10mm}\C_{\mathfrak{g}}\otimes H^*(X,\C)[r] \hspace{-1mm}& \rTo^{\hspace{-15mm}\scriptstyle\id\otimes p_*\hspace{-33mm}} & \HonV & &  
\hspace{-1mm}\C_{\mathfrak{g}}\otimes H^*(X_\mu,\C) [r_\mu] 
\hspace{-10mm}\\ 
& & \dTo^{\scriptstyle\alpha} & & \dTo & & \\ 
\hspace{-18mm}\mathbf{DF}(R \pi_* \C_{\widetilde{\mathfrak{g}}}[d^2]) & \hLine
 & \VonH & \rTo^{\hspace{-5mm}\scriptstyle\mathbf{DF}(BM)}
\hspace{-8mm} & 
\hspace{7mm}\mathbf{DF}(R \pi'_* \C_{\widetilde{\mathfrak{g}}_\mu}[d^2])\hspace{-1mm}& & \dTo_{\scriptstyle\alpha'} \\ 
& \rdTo^{\scriptstyle\tau} & & & & \rdTo^{\scriptstyle\tau'} & \\ 
& & \hspace{-5mm}\R \bar\pi_* \C_{\widetilde{\mathcal{N}}}[r] \hspace{-1mm}& & \rTo^{\hspace{-5mm}\scriptstyle\overline{BM} \hspace{-5mm}} & & \hspace{-1mm}R \bar\pi'_* \C_{\widetilde{\mathcal{N}}_\mu}[r_\mu]
\end{diagram} 
}

\noindent
In order to prove the second statement of the theorem, we need to show
that the front face commutes, which follows if we can show that all
the other faces commute. The commutativity of the top face is clear, the left and right faces
commute by Proposition~\ref{taus},
and the bottom face commutes by the definition of $\overline{BM}$.
Thus it remains to show that the back face commutes.
Equivalently, by the definitions of $\gamma$ and $\gamma'$,
we need to show that the following
diagram commutes:
$$
\begin{CD}
\delta_0 \otimes H^*(X,\C)
&@<< \id \otimes p^* <& \delta_0\otimes
H^*(X_\mu,\C)\\
@A\beta AA&&@AA\beta' A\\
R \pi_* \C_{\widetilde{\mathfrak{g}}}&@<<BM<&
R \pi'_* \C_{\widetilde{\mathfrak{g}}_\mu}
\end{CD}
$$
To see this, we just need to check at the level of stalks.
It is trivial apart from the stalk at $0$, and at $0$ it
follows from (\ref{sta}).

To deduce the first statement, consider the bottom square in our
commuting cube. Applying the functor $\mathbf{DF}$ to the first
statement  of Theorem~\ref{bm}, we know already that
$\mathbf{DF}(BM)$ restricts to an isomorphism between
$\mathbf{DF}(R \pi_* \C_{\widetilde{\mathfrak{g}}}[d^2])^{S_\mu\inv}$
and
$\mathbf{DF}(R \pi'_* \C_{\widetilde{\mathfrak{g}}_\mu}[d^2])$.
Now use Corollary~\ref{hob}.
\end{proof}

\begin{Remark}\rm
Although we are only considering the case $\mathfrak{g} = \mathfrak{gl}_d(\C)$ here, Theorem~\ref{mt3} 
remains true (with appropriate notational changes) on replacing $\mathfrak{g}$
by an arbitrary semisimple Lie algebra.
The proof in general is essentially the same as above.
\end{Remark}

Finally we explain how to deduce Theorem~\ref{mt2} from Theorem~\ref{mt3}.
The uniqueness in the statement of Theorem~\ref{mt2}
is clear as the map $i^*$ is surjective.
To prove the existence of $\bar p_*$,
apply the functor
$\mathcal H^{i-r}(?)_{x^\lambda}$ (the stalk of the $(i-r)$th cohomology sheaf
at the point $x^\lambda$) to the commutative square (\ref{lastsq}),
and note by proper base change that there are canonical isomorphisms
\begin{align}\label{idone}
\mathcal H^{i-r}(R \bar\pi_* \C_{\widetilde{\mathcal N}}[r])_{x^\lambda}
&\cong
H^i(\bar\pi^{-1}(x^\lambda), \C_{\widetilde{\mathcal N}})
\cong H^i(X^\lambda,\C),\\
\mathcal H^{i-r}(R \bar\pi'_* \C_{\widetilde{\mathcal N}_\mu}[r_\mu])_{x^\lambda}
&\cong
H^{i-2d_\mu}({{\bar\pi}'}{^{-1}}(x^\lambda),\C_{\widetilde{\mathcal N}_\mu})
\cong
H^{i-2d_\mu}(X_\mu^\lambda,\C).
\end{align}
Under these isomorphisms, 
the maps induced by $\alpha$ and $\alpha'$ correspond
to the maps $i^*$ and $j^*$ in (\ref{phys}), respectively, as a consequence of naturality in the 
proper base change theorem.
The last statement of Theorem~\ref{mt2} follows from the first statement of Theorem~\ref{mt3}, since by Proposition~\ref{ab} the 
action of $S_d$ 
on $H^i(X^\lambda\cfs)$ induced by
its action on $R \bar\pi_* \C_{\widetilde{\mathcal{N}}}[r]$ via (\ref{idone})
is exactly the Springer action being considered in Theorem~\ref{mt2}.

\section{Proof of Theorems~\ref{mt1} and \ref{bt}}\label{stan}

Now everything is ready to prove Theorem~\ref{mt1}
(and complete the proof of Theorem~\ref{bt}).
Consider the diagram (\ref{diag2}).
In order to show that the composition $j^*\circ \psi$ factors through 
the quotient to induce a homorphism $\bar\psi:C^\lambda \rightarrow 
H^*(X^\lambda\cfs)$, we need to show that
\begin{equation}\label{s1}
j^*(\psi(e_r(\mu;i_1,\dots,i_m))) = 0
\end{equation}
for $m, i_1,\dots,i_m$ and $r$ as in (\ref{rel2}).
Let $\nu \in \Lambda(n,d)$ be a composition obtained by rearranging the parts of $\mu$ so that $\nu_j 
= \mu_{i_j}$ for each $j=1,\dots,m$. Let
$w \in S_d$ be the unique permutation of minimal length such that
$w S_\nu w^{-1} = S_\mu$.
The natural action of $w$ on $P$ induces an algebra isomorphism
$w:C_\nu \stackrel{\sim}{\rightarrow} C_\mu$.

\begin{Lemma}\label{tim}
There exist unique algebra isomorphisms $f$ and $g$ making the
following diagram commute:
$$
\begin{CD}
C_\nu&@>\psi>>&H^*(X_\nu\cfs)&@>j^*>>&H^*(X^\lambda_\nu\cfs)\\
@Vw VV&&@Vf VV&&@VVg V\\
C_\mu&@>\psi>>&H^*(X_\mu\cfs)&@>j^*>>&H^*(X^\lambda_\mu\cfs).
\end{CD}
$$
\end{Lemma}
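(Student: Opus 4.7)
The uniqueness of $f$ and $g$ is immediate from the surjectivity of $\psi_\nu$ and $j^*$ (the latter by Corollary~\ref{cd}), so it suffices to prove existence. For $f$, I would simply set $f := \psi_\mu \circ w \circ \psi_\nu^{-1}$: since $w S_\nu w^{-1} = S_\mu$, the natural action $w\cdot x_i = x_{w(i)}$ on $P$ restricts to an algebra isomorphism $P_\nu \to P_\mu$ carrying $I_\nu$ onto $I_\mu$, so $w$ descends to an algebra isomorphism $C_\nu \to C_\mu$, making $f$ an algebra isomorphism by construction.

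The plan for $g$ is to transport the Springer action of $w$ on $H^*(X^\lambda\cfs)$ through Theorem~\ref{mt2}. The first step is to check that $w(\eps_\nu) = \eps_\mu$: the minimality of $w$ forces it to preserve the order within each $S_\nu$-orbit on $\{1,\dots,d\}$, and (with the standard normalization) $\eps_\mu$ is the product of $(x_i-x_j)$ over pairs $i<j$ in the same $S_\mu$-orbit. Combining this with the Springer-equivariance of $\phi$ and the diagram (\ref{antii}), under the top isomorphism $p_* : H^*(X\cfs)^{S_\nu\anti} \stackrel{\sim}{\to} H^*(X_\nu\cfs)[-2d_\nu]$ the map $f$ corresponds to the restriction to anti-invariants of the Springer action of $w$ on $H^*(X\cfs)$. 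I would then define $g$ as the unique linear isomorphism making the analogous compatibility hold downstairs: transport the (well-defined) restriction $w : H^*(X^\lambda\cfs)^{S_\nu\anti} \stackrel{\sim}{\to} H^*(X^\lambda\cfs)^{S_\mu\anti}$ of the Springer action through the isomorphism $\bar p_*$ of Theorem~\ref{mt2}. Commutativity of the right square, $g\circ j^* = j^*\circ f$, follows from the Springer-equivariance $i^*\circ w = w\circ i^*$ combined with the naturality square $j^*\circ p_* = \bar p_*\circ i^*$ of Theorem~\ref{mt2}. That $g$ is an algebra isomorphism (not merely linear) comes for free: $j^*_\mu\circ f$ is an algebra homomorphism and $j^*_\nu$ is a surjective algebra homomorphism, so the resulting $g$ is one as well.

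The main obstacle I anticipate is the compatibility work of the second paragraph: establishing $w(\eps_\nu)=\eps_\mu$ from the minimality hypothesis, and carefully tracking $f$ through the chain of isomorphisms $\phi$, $\psi$, $p_*$ to identify it with the restriction of the Springer action on anti-invariants. Once that identification is in hand, the remaining commutativity check and the upgrade from linear to algebra isomorphism are essentially formal.
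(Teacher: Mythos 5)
Your proposal is correct and follows essentially the same route as the paper: define $f$ by transporting $w$ through $\psi$, identify $f$ under $p_*$ with the restriction of the $w$-action to $S_\nu$-anti-invariants via $w(\eps_\nu)=\eps_\mu$ and (\ref{antii}), define $g$ by transporting $w$ through the isomorphism $\bar p_*$ of Theorem~\ref{mt2}, and deduce commutativity from the $S_d$-equivariance of $i^*$ together with the naturality square $j^*\circ p_* = \bar p_*\circ i^*$ (the paper organizes exactly these facts as the faces of a commuting cube). Your final step, getting uniqueness and the algebra property of $g$ from the surjectivity of $j^*$ and the fact that $f$ is an algebra map, is also the paper's argument.
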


\begin{proof}
The existence and uniqueness of $f$ making the left hand square commute is clear,
as the other three maps in this square are algebra isomorphisms. For
the right hand square, we 
first observe that the diagram
$$
\begin{CD}
C^{S_\nu\anti}&@>\sim>>&C_\nu\\
@Vw VV&&@VV w V\\
C^{S_\mu\anti}&@>\sim>>&C_\mu
\end{CD}
$$
commutes, where the horizontal maps are as in (\ref{antii}) and the map $w$
is induced by the natural action of $w$ on $C$.
This follows using the algebraic description of the horizontal maps
given just after (\ref{antii}), together with the
observation 
that $w(\eps_\nu) = \eps_\mu$ by the choice of $w$.
Combining this with (\ref{antii}), we deduce that the left face of the following
cube commutes:
{\small
\begin{diagram} 
\!\!\!\!H^*(X\cfs)^{S_\nu\anti} & & \rTo^{\scriptstyle i^*} & & H^*(X^\la\cfs)^{S_{\nu}\anti} & & \\ 
& \rdTo^{\scriptstyle p_*} & & & \vLine^{\scriptstyle w} & \rdTo^{\scriptstyle\bar p_*} & \\ 
\dTo^{\scriptstyle w} & & H^*(X_\nu\cfs) & \rTo^{\scriptstyle\:j^*\!\!\!\!\!\!\!\!\!\!\!\!\!\!\!\!} & \HonV & & H^*(X^\lambda_\nu\cfs)\phantom{.}\\ 
& & \dTo^{\scriptstyle f} & & \dTo & & \\ 
\!\!\!\!H^*(X\cfs)^{S_\mu\anti} & \hLine & \VonH & \rTo^{\scriptstyle\!\!\!\!\!\!\!\!\!\!\!\!\!\!\!\! i^*} & H^*(X^\lambda\cfs)^{S_\mu\anti} & & \dTo_{\scriptstyle g} \\ 
& \rdTo^{\scriptstyle p_*} & & & & \rdTo^{\scriptstyle \bar p_*} & \\ 
& & H^*(X_\mu\cfs) & & \rTo^{\scriptstyle j^*} & & H^*(X^\lambda_\mu\cfs).
\end{diagram} 
}

\noindent
The vertical maps on the back face are induced by the natural action
of $w$ on $H^*(X\cfs)$ and on $H^*(X^\lambda\cfs)$, respectively, and
it is clear that this face commutes 
because 
$i^*:H^*(X\cfs) \twoheadrightarrow H^*(X^\lambda\cfs)$
is $S_d$-equivariant.
Also the top and bottom faces commute by Theorem~\ref{mt2}.
Finally, we let $g$ be the unique isomorphism making the right face
commute. Then the front face commutes too, and this establishes
the existence of the map $g$ as in the statement of the lemma.
The 
uniqueness of $g$ and the fact that it 
is an algebra homomorphism follow 
because $f$ is a homomorphism and
$j^*$ is surjective according to Corollary~\ref{cd}.
\end{proof}

Continuing with the notation fixed just before Lemma~\ref{tim}, we observe that
$w(e_r(\nu;1,\dots,m)) = e_r(\mu;i_1,\dots,i_m)$. So
Lemma~\ref{tim} implies that
$$
j^*(\psi(e_r(\mu;i_1,\dots,i_m)))
= g(j^*(\psi(e_r(\nu;1,\dots,m)))).
$$
Replacing $\mu$ by $\nu$ if necessary,
this reduces the proof of (\ref{s1}) to the special case
that $i_j = j$ for each $j=1,\dots,m$, 
i.e. we just need to show
that
\begin{equation}\label{s3}
j^*(\psi(e_r(x_1,\dots,x_k))) = 0
\end{equation}
for all $r \geq k-h$,
where
$k := \mu_1+\cdots+\mu_m$,
$h := \lambda_{l+1}+\cdots+\lambda_n$ and
$l := \#\{i=m+1,\dots,n\:|\:\mu_i > 0\}$.
In view of Corollary~\ref{ne}, we may as well assume further that
$\lambda \geq \mu^+$, hence that $h \leq k$.

In order to establish (\ref{s3}), 
we follow
Tanisaki's original argument from \cite{T} closely.
We work again with the Schubert varieties $Y_\gamma \subseteq \Gr_{k,d-k}$ from (\ref{schub}). However 
we switch to using the basis 
$f_1,\dots,f_d$ for $\C^d$ obtained by
reading the boxes of the Young diagram in the opposite order to 
$\S$\ref{spaving}, 
for example in the situation of (\ref{piccy}) we now use the labelling
\begin{equation}\label{piccy2}
\Youngdiagram{$f_9$&$f_8$&$f_7$&$f_6$\cr $f_5$&$f_4$&$f_3$\cr $f_2$&$f_1$\cr}
\qquad\qquad
x^\lambda = \begin{CD}
@VVV
\end{CD}.
\end{equation}
The fundamental classes 
$[Y_\gamma] \in H_{2|\gamma|}(\Gr_{k,d}\cfs)$
give a basis for $H_*(\Gr_{k,d}\cfs)$.
Let 
$\{\sigma_\gamma\:|\:\gamma \in \Par_{k,d-k}\}$ be the dual basis for
$H^*(\Gr_{k,d}\cfs)$.

\begin{Lemma}\label{tiger}
Let
$s:X_\mu \rightarrow \Gr_{k,d},\: (V_0,\dots, V_n) \mapsto V_m$
be the natural projection.
We have that
$s^*(\sigma_\gamma) = \psi(s_\gamma(x_1,\dots,x_k))$
for each $\gamma \in \Par_{k,d-k}$,
where $s_\gamma(x_1,\dots,x_k)$ is the image in $C_\mu$ of the
Schur function 
associated to the partition $\gamma$.
\end{Lemma}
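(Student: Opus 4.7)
The plan is to reduce to the classical formula expressing Schubert classes on the Grassmannian as Schur polynomials in Chern roots of the tautological bundle, and then match these Chern roots against the $\phi(x_i)$'s using the full flag variety picture from diagram (\ref{invi}). The opening step is standard (see e.g.\ \cite[$\S$14.5]{Ful}): letting $\mathcal T$ denote the tautological rank-$k$ sub-bundle of the trivial bundle on $\Gr_{k,d}$ and $y_1,\dots,y_k$ the Chern roots of its dual $\mathcal T^*$ (so that $e_r(y_1,\dots,y_k) = c_r(\mathcal T^*)$), one has $\sigma_\gamma = s_\gamma(y_1,\dots,y_k)$ for every $\gamma \in \Par_{k,d-k}$.

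By definition $s^*\mathcal T = \widetilde V_m$, and composing with the projection $p:X \rightarrow X_\mu$ from (\ref{invi}) yields $(s\circ p)^*\mathcal T = \widetilde U_k$. The latter carries a full filtration $\widetilde U_0 < \widetilde U_1 < \cdots < \widetilde U_k$ whose successive quotients are line bundles with $c_1(\widetilde U_i/\widetilde U_{i-1}) = -\phi(x_i)$, so the Chern roots of $(s\circ p)^*\mathcal T^*$ are precisely $\phi(x_1),\dots,\phi(x_k)$. Pulling the formula from the previous paragraph back along $s\circ p$ therefore gives
\begin{equation*}
p^*(s^*\sigma_\gamma) \;=\; s_\gamma(\phi(x_1),\dots,\phi(x_k)) \;=\; \phi(s_\gamma(x_1,\dots,x_k)).
\end{equation*}

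Now $s_\gamma(x_1,\dots,x_k)$ is symmetric in the whole block $X_1 \cup \cdots \cup X_m$ of variables and does not involve the remaining $x_i$'s, hence lies in $P_\mu = P^{S_\mu\inv}$. The commutative square (\ref{invi}) identifies $\phi(s_\gamma(x_1,\dots,x_k))$ with $p^*\psi(s_\gamma(x_1,\dots,x_k))$, and since $p^*:H^*(X_\mu\cfs)\rightarrow H^*(X\cfs)$ is injective (it is an isomorphism onto the $S_\mu$-invariants) the claim follows. The only point demanding real care is the sign convention at the outset: the minus sign built into $\phi(x_i) = -c_1(\widetilde U_i/\widetilde U_{i-1})$ is precisely what makes the $\phi(x_i)$'s play the role of Chern roots of $\mathcal T^*$ rather than $\mathcal T$, which in turn is what ensures that $s_\gamma$ (and not $s_{\gamma^T}$) appears on the right hand side; once that is kept straight, everything else is routine bookkeeping with the splitting principle and the compatibility of pullback with Chern classes.
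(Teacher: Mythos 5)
Your argument is correct and takes essentially the same route as the paper: reduce to the full flag variety via $p$, establish the classical identity $(s\circ p)^*(\sigma_\gamma)=\phi(s_\gamma(x_1,\dots,x_k))$ (which the paper simply cites from \cite[$\S$10.6]{Ful} and you rederive from the Grassmannian Giambelli formula via the Chern roots of $\mathcal{T}^*$ and the splitting principle, with the sign conventions handled correctly), and conclude using the commuting square (\ref{invi}) and the injectivity of $p^*$. The only quibble is bibliographic: in this paper \cite{Ful} is Fulton's \emph{Young Tableaux}, where the needed facts are in $\S$9.4 and $\S$10.6 rather than $\S$14.5.
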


\begin{proof}
This follows from the analogous statement for the full flag variety $X$, which is classical. To give a little more detail, consider the projection
$r:X \rightarrow \Gr_{k,d},\: (U_0,\dots,U_d) \mapsto U_k$.
By \cite[$\S$10.6]{Ful}, we have that
$r^*(\sigma_\gamma) = \phi(s_\gamma(x_1,\dots,x_k))$.
Now use (\ref{invi})
and the fact that $r = s \circ p$.
\end{proof}

Set $\alpha := ((d-k)^{h-k}) \in \Par_{k,d-k}$
and consider the Schubert variety 
\begin{equation*}
Y_\alpha = \{U \in \Gr_{k,d}\:|\: \langle f_1,\dots,f_h \rangle \subseteq U\}.
\end{equation*}
It is obvious that $Y_\alpha \cong \Gr_{k-h,d-h}$.

\begin{Lemma}\label{sq2}
The projection $s:X_\mu \rightarrow \Gr_{k,d}$ from Lemma~\ref{tiger}
restricts to a morphism $\bar s:X^\lambda_\mu \rightarrow Y_\alpha$.
\end{Lemma}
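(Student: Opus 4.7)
The plan is to prove directly that for any $(V_0, \dots, V_n) \in X^\lambda_\mu$, the subspace $V_m = s(V_0, \dots, V_n)$ satisfies $\langle f_1, \dots, f_h\rangle \subseteq V_m$, which is exactly the defining condition for $Y_\alpha$. The approach is to identify $\langle f_1, \dots, f_h\rangle$ with the image of $(x^\lambda)^l$ and then deduce the containment from the Spaltenstein conditions $x^\lambda V_i \subseteq V_{i-1}$, making essential use of the zero parts of $\mu$.

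First I would interpret $\langle f_1, \dots, f_h\rangle$ in terms of the labelling of (\ref{piccy2}). Since $f_1, \dots, f_d$ are now obtained by reading the boxes of the Young diagram of $\la$ in the reverse order to $\S$\ref{spaving}, the vectors $f_1, \dots, f_h$ correspond to the $h = \lambda_{l+1} + \cdots + \lambda_n$ boxes occupying the bottom $n - l$ rows. Because $x^\lambda$ shifts each basis vector one box down in its column, the image $\im (x^\lambda)^l$ is spanned by those $f$'s labelling boxes with at least $l$ boxes above them in their column, namely precisely the bottom $n - l$ rows. Hence $\langle f_1, \dots, f_h\rangle = \im (x^\lambda)^l$.

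Next I would extract the chain of distinct subspaces from $V_m \leq V_{m+1} \leq \cdots \leq V_n = \C^d$. Let $i_1 < \cdots < i_l$ enumerate the indices in $\{m+1, \dots, n\}$ with $\mu_{i_j} > 0$, and set $i_0 := m$; by definition of $l$ there are exactly $l$ such indices. For each $j$ the intermediate parts $\mu_{i_{j-1}+1}, \dots, \mu_{i_j - 1}$ all vanish, forcing $V_{i_j - 1} = V_{i_{j-1}}$, and the Spaltenstein condition $x^\lambda V_{i_j} \subseteq V_{i_j - 1}$ then reads $x^\lambda V_{i_j} \subseteq V_{i_{j-1}}$. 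Composing these inclusions along $V_m = V_{i_0} \leq V_{i_1} \leq \cdots \leq V_{i_l} = V_n$ yields $(x^\lambda)^l \C^d \subseteq V_m$, so $\langle f_1, \dots, f_h\rangle \subseteq V_m$ as required.

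There is no genuine obstacle here; the check is routine. The only subtlety worth flagging is that one needs exactly $l$, rather than $n - m$, applications of $x^\lambda$, which is precisely what the definition $l := \#\{i = m+1, \dots, n \mid \mu_i > 0\}$ encodes: each strict inclusion in the chain $V_m \leq \cdots \leq V_n$ consumes one factor of $x^\lambda$, while equalities arising from the zero parts of $\mu$ contribute for free.
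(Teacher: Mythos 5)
Your proof is correct and follows essentially the same route as the paper: identify $\langle f_1,\dots,f_h\rangle$ with $\im\,(x^\lambda)^l$ via the reversed labelling (\ref{piccy2}) and deduce $(x^\lambda)^l(V_n)\subseteq V_m$ from the flag conditions. The only difference is that you spell out the step ``$(x^\lambda)^l(V_n)\subseteq V_m$ by the definition of $l$'' in full detail, tracking the zero parts of $\mu$, which the paper leaves implicit.
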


\begin{proof}
Take $(V_0,\dots,V_n) \in X^\lambda_\mu$.
We need to show that $\langle f_1,\dots,f_h \rangle \subseteq V_m$.
We have that $(x^\lambda)^l (V_n) \subseteq V_m$
by the definition of $l$. 
The image of $(x^\lambda)^l$ is the span
of the basis vectors $f_i$
labelling boxes in rows $> l$ in the Young diagram of $\lambda$.
Because of our choice of labelling
(recall (\ref{piccy2})) this 
is exactly $\langle f_1,\dots,f_h\rangle$.
Hence $\langle f_1,\dots,f_h \rangle \subseteq V_m$.
\end{proof}

Now we can establish (\ref{s3}) for any $r > k-h$.
Clearly we can assume that $r \leq k < d$. Set $\delta := (1^r) \in \Par_{k,d-k}$
and
observe that 
$s_\delta(x_1,\dots,x_k)=e_r(x_1,\dots,x_k).$ 
So in view of Lemma~\ref{tiger} we just need to show
that $j^*(s^*(\sigma_\delta)) = 0$.
Letting $v:Y_\alpha \hookrightarrow \Gr_{k,d-k}$ be the inclusion, we
have from Lemma~\ref{sq2} that $s \circ j = v \circ \bar s$.
So $j^*(s^*(\sigma_\delta)) = \bar s^* (v^*(\sigma_\delta))$
and we are done if we can show that $v^*(\sigma_\delta) = 0$.
But this is clear as the classes $\{v_*([Y_\gamma])\:|\:\gamma \subseteq \alpha\}$
form a basis for $H_*(Y_\alpha\cfs)$ and $\delta\not\subseteq \alpha$.

Finally we can complete the proof of Theorem~\ref{mt1}.
We have shown
so far that there is a unique homomorphism
$\bar\psi:C^\lambda_\mu \rightarrow H^*(X^\lambda_\mu\cfs)$
making the diagram (\ref{diag2}) commute.
Moreover $\bar\psi$ is surjective since
$j^*$ is surjective by Corollary~\ref{cd}.
It remains to observe that $\dim C^\lambda_\mu \leq |\Col^\lambda_\mu|$
by the spanning part of Theorem~\ref{bt} established in $\S$\ref{sab},
while $\dim H^*(X^\lambda_\mu\cfs) = |\Col^\lambda_\mu|$
according to Corollary~\ref{cd}. Hence
$\bar\psi$ is actually an isomorphism, and Theorem~\ref{mt1}
is proved. 
At the same time, we have shown that
$\dim C^\lambda_\mu = |\Col^\lambda_\mu|$, so the vectors
$\{h(\mathtt{T})\:|\:\mathtt{T} \in \Col^\lambda_\mu\}$
which are already known to span $C^\lambda_\mu$ must actually be linearly independent. Thus we have also proved Theorem~\ref{bt}.

\begin{Remark}\rm
Theorem~\ref{mt1} also holds over the integers.
More precisely, 
if one replaces every occurence of $\C$ with $\Z$
in the definitions 
of the algebras $C^\lambda$ and $C^\lambda_\mu$ in the introduction, 
then the statement of
Theorem~\ref{mt1}
with $\C$ replaced by $\Z$ is still true.
So $H^*(X^\lambda_\mu,\Z)$
is isomorphic to the 
analogue of $C^\lambda_\mu$ 
over the integers.
Moreover, the latter algebra is a free $\Z$-module
with basis given by the elements listed in Theorem~\ref{bt}.
To see all this, we just note that 
the proof of Corollary~\ref{cd} works over $\Z$, 
giving in particular that 
$H^*(X^\lambda_\mu,\Z)$ is a free $\Z$-module.
Also the proof of the spanning part of Theorem~\ref{bt} given in $\S$\ref{sab}
is valid over $\Z$. Finally, the analogue of the 
key relation (\ref{s1}) over $\Z$, which is all 
that is needed to adapt the argument
in the previous paragraph, follows
from the relation established over $\C$
thanks to the freeness of $H^*(X^\lambda_\mu,\Z)$.
\end{Remark}


\begin{thebibliography}{BLPPW}

\bibitem[BM1]{BM1}
W. Borho and R. MacPherson,
R\'epresentations des groupes de Weyl et homologie d'intersection pour
les vari\'et\'es nilpotentes,
{\em C. R. Acad. Sci. Paris} {\bf 292} (1981), 707--710.

\bibitem[BM2]{BM}
\bysame,
Partial resolutions of nilpotent varieties,
{\em Ast\'erisque} {\bf 101-102} (1983), 23--74.

\bibitem[BLPPW]{BLPPW}
T. Braden, A. Licata, C. Phan, N. Proudfoot and B. Webster,
Localization algebras and deformations of Koszul algebras;
{\tt http://arxiv.org/abs/0905.1335}.

\bibitem[BG]{BG}
A. Braverman and D. Gaitsgory,
On Ginzburg's Lagrangian construction of representations of $GL(n)$,
{\em Math. Res. Lett.} {\bf 6} (1999), 195--201.

\bibitem[B]{duke}
J. Brundan,
Symmetric functions, parabolic category $\mathcal O$, and the Springer
fiber,
{\em Duke Math. J.} {\bf 143} (2008), 41--79.

\bibitem[Bry]{Brylinski}
J.-L. Brylinski, Transformations canoniques, dualit\'e projective,
th\'eorie de Lefschetz, transformations de Fourier et sommes trigonom\'etriques,
{\em Ast\'erisque} {\bf 140--141} (1986), 3--134.

\bibitem[Ch]{Ch} C. Chevalley, Invariants of finite groups generated by reflections, 
{\em Amer. J. Math.}
{\bf 77} (1955), 778--782.

\bibitem[DCP]{DP}
C. De Concini and C. Procesi,
Symmetric functions, conjugacy classes 
and the flag variety, {\em Invent. Math.} {\bf 64} (1981), 203--219. 

\bibitem[F]{Ful}
W. Fulton, 
{\em Young tableaux}, 
LMS Student Texts 35, CUP, 1997. 

\bibitem[GP]{GP}
A. Garsia and C. Procesi, 
On certain graded $S_n$-modules and the $q$-Kostka polynomials,
{\em Advances Math.} {\bf 94} (1992), 82--138.

\bibitem[G1]{Gi}
V. Ginzburg,
Lagrangian construction of the enveloping algebra $U(\mathfrak{sl}_n)$,
{\em C. R. Acad. Sci. Paris} {\bf 312} (1991), 907--912.

\bibitem[G2]{Gi2}
\bysame, Geometric methods in the representation theory of Hecke algebras and quantum groups (notes by V. Baranovsky), 
{\em NATO Adv. Sci. Inst. Ser. C Math. Phys. Sci.} {\bf 514} (1998), 127--183. 

\bibitem[HS]{HS}
R. Hotta and T. Springer,
A specialization theorem for certain Weyl group representations
and an application to the Green polynomials of unitary groups,
{\em Invent. Math.} {\bf 41} (1977), 113--127.

\bibitem[J]{Jantzen}
J. C. Jantzen,
Nilpotent orbits in representation theory, {\em Progr. Math.} {\bf 228} (2004),
1--211.

\bibitem[KW]{KW}
R. Kiehl and R. Weissauer,
{\em Weil Conjectures, Perverse Sheaves and $l$-adic Fourier Transform},
Springer, 2001.



\bibitem[M]{Mac}
I. Macdonald,
{\em Symmetric functions and Hall polynomials},
Oxford Mathematical Monographs, second edition, OUP, 1995.

\bibitem[Spa]{Sp}
N. Spaltenstein,
The fixed point set of a unipotent transformation on the flag manifold,
{\em Nederl. Akad. Wetensch. Proc. Ser. A} {\bf 79} (1976), 452--456.


\bibitem[S1]{S1}
T. Springer,
Trigonometric sums, Green functions of finite groups and representations of Weyl groups,
{\em Invent. Math.} {\bf 36} (1976), 173--207.

\bibitem[S2]{S2}
\bysame,
A construction of representations of Weyl groups,
{\em Invent. Math.} {\bf 44} (1978), 279--293.

\bibitem[St]{Stroppel}
C. Stroppel,
Parabolic category $\mathcal{O}$, perverse sheaves on Grassmannians,
Springer fibres and Khovanov homology,
{\em Compositio Math.} {\bf 145} (2009), 954--992.

\bibitem[T]{T}
T. Tanisaki,
Defining ideals of the closures of the conjugacy classes and representations of the Weyl groups,
{\em Tohoku Math. J.} {\bf 34} (1982), 575--585.

\end{thebibliography}
\end{document}